\documentclass[10pt,a4paper]{article}

\usepackage{amsmath,amssymb}
\usepackage{amsthm}
\usepackage{graphicx}
\usepackage{siunitx}
\usepackage{enumitem}
\usepackage{pb-diagram}
\usepackage{tikz}
\usepackage{bm}
\usepackage{color}
\usepackage{float}
\usepackage{url}
\usepackage{comment}
\usepackage{multirow}
\usepackage{mathrsfs}
\usepackage{ascmac}
\usepackage{caption}
\usepackage[margin=25truemm]{geometry}

\theoremstyle{definition}

\newtheorem{thm}{Theorem}[section]
\newtheorem{prop}[thm]{Proposition}
\newtheorem{lem}[thm]{Lemma}
\newtheorem{cor}[thm]{Corollary}
\newtheorem{rem}{Remark}[section]
\newtheorem*{rem*}{Remark}

\numberwithin{equation}{section}

\newcommand{\BN}{\mathbf{N}}
\newcommand{\BZ}{\mathbf{Z}}

\newcommand{\BR}{\mathbf{R}}

\newcommand{\CB}{\mathcal{B}}
\newcommand{\CC}{\mathcal{C}}

\newcommand{\CH}{\mathcal{H}}
\newcommand{\CL}{\mathcal{L}}
\newcommand{\CM}{\mathcal{M}}
\newcommand{\CN}{\mathcal{N}}

\newcommand{\CP}{\mathcal{P}}
\newcommand{\CQ}{\mathcal{Q}}
\newcommand{\CX}{\mathcal{X}}
\newcommand{\CY}{\mathcal{Y}}

\title{Limit distributions of the threshold radius for the maximum degree and the associated point configurations in random geometric graphs}
\author{Junpei Otsuka
\thanks{Master's Program in Mathematics, Graduate School of Science and Technology, University of Tsukuba, Ibaraki, 305-8571, Japan.
E-mail: s2520128@u.tsukuba.ac.jp}}
\date{\today}
\begin{document}
\maketitle

\begin{abstract}
    A random geometric graph $G(\CX_n, r_n)$ is formed by taking a binomial process $\CX_n$ as the set of vertices and joining any two distinct points with an edge if they lie within distance $r_n$ of each other.
    We investigate the limit distribution of the threshold radius for which the maximum degree of the graph is at least a given value that depends on $n$.
    In addition, given the radii $(r_n)_{n \in \BN}$, we examine the limiting behavior of the point process formed by the vertices that achieve the maximum degree.
    Roughly speaking, the limiting process exhibits a compound Poisson behavior in the regime where the maximum degree remains bounded, due to local geometric dependencies, whereas it exhibits a Poisson behavior in the regime where the maximum degree diverges more slowly than $\log n$.
\end{abstract}

\section{Introduction}
\subsection{Problem setting and overview}
Let $(X_i)_{i \in \BN}$ be a sequence of independent and identically distributed (i.i.d.) $\BR^d$-valued random variables whose law has a bounded density $f$.
We write its finite essential supremum as $f_{\max}$.
Then, define a point process $\CX_n := \sum_{i=1}^n \delta_{X_i}$ called a binomial process.
Note that $\CX_n$ is simple since the law of $X_1$ admits a density.
For $r > 0$, we define the finite simple graph $G(\CX_n; r)$ by taking the support of the point process $\CX_n$, which is a discrete subset of $\BR^d$, as its vertex set.
For some fixed norm $\|\cdot\|$, two distinct vertices $X_i$ and $X_j$ are joined by an edge whenever $\|X_i-X_j\| \leq r$.
This is called a random geometric graph.
We refer the reader to Penrose~\cite{Penrose.2003} for a comprehensive account of random geometric graphs.
The asymptotic behavior of the sequence of random geometric graphs $G(\CX_n; r_n)$ is frequently studied for a given sequence of positive numbers $(r_n)_{n \in \BN}$.
We will also use geometric graphs based on Poisson point processes.
For a Poisson random variable $N_{\lambda}$ with mean $\lambda$ which is independent of $(X_i)_{i \in \BN}$, define $\CP_{\lambda} := \sum_{i=1}^{N_{\lambda}} \delta_{X_i}$.
$\CP_{\lambda}$ is a Poisson point process with intensity function $\lambda f$.
For $r > 0$, the random graph $G(\CP_{\lambda}; r)$ is defined in the same manner as in the binomial process case.

For the sake of simplicity and intuition, we write $X \in \CX_n$ instead of $X \in \mathrm{supp}(\CX_n)$ (see Section~\ref{section.01}).
For the random geometric graph $G(\CX_n; r_n)$, the maximum degree $\Delta_n$ is defined by
\begin{equation*}
    \Delta_n := \max \{\mathrm{deg}_n(X) : X \in \CX_n\},
\end{equation*}
where $\mathop{\mathrm{deg}_n} (X)$ denotes the degree of the vertex $X$ in $G(\CX_n; r_n)$.
Given a sequence $(k_n)_{n \in \BN}$ of positive integers, let $S_{k_n}(\CX_n)$ be the threshold radius for the maximum degree to be at least $k_n$, defined by
\begin{equation*}
    S_{k_n}(\CX_n) := \inf \{r > 0 : \text{(the maximum degree of $G(\CX_n;r)$)} \geq k_n\}.
\end{equation*}
This is also called the smallest $k_n$-nearest neighbor link in \cite{Penrose.2003}.
Regarding the almost sure limit for $\Delta_n$, it has been shown that if $nr_n^d$ decays faster than $n^{-\varepsilon}$ for some $\varepsilon > 0$, $\Delta_n$ is bounded and consequently no non-trivial limit exists.
For all other regimes satisfying $r_n \to 0$, non-trivial almost sure limits (or limit in probability) are established.
In particular, for the case where $nr_n^d = \Omega(\log n)$, the almost sure limit for $\Delta_n$ is derived from that of $S_{k_n}(\mathcal{X}_n)$, provided that the density $f$ is assumed to have compact support.
This topic has been investigated by various authors, and we refer the reader to Penrose~\cite{Penrose.2003} for a summary.
As for the fluctuations of $\Delta_n$, it has been shown that if $nr_n^d$ decays faster than $n^{-\varepsilon}$ for some $\varepsilon > 0$, $\Delta_n$ concentrates on at most two consecutive bounded integers \cite[Theorem~6.3]{Penrose.2003}.
In the regime where $nr_n^d = n^{o(1)}$ and $nr_n^d = o(\log n)$, it has been established by M\"{u}ller~\cite{Muller.2008} that $\Delta_n$ concentrates on at most two consecutive integers that diverge to infinity.
As for the regime $nr_n^d = \Omega(\log n)$, rigorous fluctuation results have not yet been established; however, fluctuation results are presented in \cite{Muller.2008} without detailed proofs.
For $S_{k_n}(\CX_n)$, the almost sure limit has been established only in the regime where $k_n = \Omega(\log n)$ with a density $f$ having compact support~\cite{Penrose.2003}.
In comparison, the fluctuations of $S_{k_n}(\CX_n)$ is less thoroughly explored across all regimes.

In this paper, we establish limit theorems for the fluctuations of $S_{k_n}(\CX_n)$ in the regime $k_n = o(\log n)$.
Regarding the threshold radius $M_{k_n}(\CX_n)$ for the \emph{minimum} degree, which is defined in a similar manner to $S_{k_n}(\CX_n)$, its fluctuations have been studied for $k_n \equiv k$ (a constant relative to $n$).
The results for $X_1$ following a uniform distribution on $[0,1]^d$ or a standard normal distribution can be found in \cite[Chapter~8]{Penrose.2003}.
In the present work, the fluctuation of $S_{k_n}(\CX_n)$ is characterized, and furthermore, we examine the limiting behavior of the point process formed by the vertices achieving the maximum degree.
In brief, we show that the limiting behavior is Poisson for $k_n \to \infty,~ k_n = o(\log n)$ and compound Poisson for constant $k_n \equiv k$.
This result arises from the local geometric dependencies inherent in random geometric graphs.

\subsection{Notation}\label{section.01}
Let $\mathbb{N}(\BR^d)$ denote the set of non-negative integer-valued locally finite measures on $\BR^d$.
This space is equipped with the $\sigma$-algebra generated by the vague topology, which is induced by the following family of functions:
\begin{equation*}
    \mu \mapsto \int_{\BR^d} f \,d\mu \quad (f \in C_K^+(\BR^d)),
\end{equation*}
where $C_K^+(\BR^d)$ denotes the space of all non-negative continuous functions with compact support.
Although an element of $\mathbb{N}(\BR^d)$ is formally a counting measure, we are often concerned with the inclusion relation between underlying point configurations.
For this reason, for $\CX, \CY \in \mathbb{N}(\BR^d)$, we write $\CX \subset \CY$ for $\CX \leq \CY$ in the sense of measures with a slight abuse of notation.
This inclusion implies $\mathrm{supp}(\CX) \subset \mathrm{supp}(\CY)$; furthermore, $\CX$ is simple whenever $\CY$ is.
In the same spirit, for a simple $\CX \in \mathbb{N}(\BR^d)$, we identify it with its support and write $X \in \CX$ as a shorthand for $X \in \mathrm{supp}(\CX)$ when picking a point $X$ from $\CX$.

A description of the results requires an analysis of the spatial scaling limits of the point processes.
For $a > 0$ and $y \in \BR^d$, we define the dilation operator $D_a$ and the translation operator $T_y$ on $\mathbb{N}(\BR^d)$ as follows:
for a counting measure $\CX = \sum_{\ell=1}^{\CX (\BR^d)} \delta_{x_{\ell}} \in \mathbb{N}(\BR^d)$, we set
\begin{equation*}
    D_a \CX := \sum_{\ell=1}^{\CX (\BR^d)} \delta_{a x_{\ell}}, \quad T_y \CX := \sum_{\ell=1}^{\CX (\BR^d)} \delta_{x_{\ell} + y}.
\end{equation*}
For a simple $\CX$, these can be interpreted as the action of affine transformations on $\CX$, which is naturally identified with a discrete subset of $\BR^d$.
By definition, for all $B \in \CB(\BR^d)$, it holds that
\begin{equation*}
    D_a (T_y \CX)(B) = \CX (a^{-1}B - y).
\end{equation*}

We write $c, c_0, c_1$ for positive constants independent of $n$, whose values may vary line by line.
Throughout this paper, $\theta$ denotes the volume of a $d$-dimensional unit ball, where its dependence on $d$ and the choice of norm is suppressed as it does not affect the analysis.

\section{Main results}
\subsection{The case of degree parameter $k_n$ fixed}
Let $\Gamma$ be a simple graph with $2 \leq j < \infty$ vertices.
Let $h_{\Gamma}$ be the function taking an element $\CY \in \mathbb{N}(\BR^d)$ as its argument, defined by $h_\Gamma(\CY) := \mathbf{1}\{G(\CY; 1) \cong \Gamma\}$, where $\cong$ denotes graph isomorphism.
For a Borel set $B \subset \BR^d$, define
\begin{equation*}
    \mu_{\Gamma,B} := \frac{1}{j!} \int_B f(x)^j \,dx \int_{(\BR^d)^{j-1}} h_{\Gamma}\left(\delta_{\bm{0}} + \sum_{\ell=1}^{j-1} \delta_{x_j}\right) \,\prod_{\ell=1}^{j-1} dx_{\ell}.
\end{equation*}
If $B = \BR^d$, we simply write $\mu_{\Gamma, \BR^d} =: \mu_{\Gamma}$.
$\Gamma$ is said to be feasible if $\mu_\Gamma \neq 0$.
\begin{thm}\label{MainResult.01}
    Assume that $d \geq 1$, $k \geq 1$, and $X_1$ has the bounded density $f$.
    Let $\{\Gamma_i\}_{i=1}^m$ be the set of all non-isomorphic feasible graphs with $k+1$ vertices that have at least one vertex of degree $k$.
    Then, as $n \to \infty$,
    \begin{equation*}
        -n^{(k+1)/(dk)} S_k(\CX_n) \xrightarrow{d} Z_{d,k},
    \end{equation*}
    where $Z_{d,k}$ is a random variable with the Weibull distribution
    \begin{equation*}
        P[Z_{d,k} \leq x] =
        \begin{cases}
            \exp(-\mu_{d,k}(-x)^{dk}) & (x < 0),\\
            1 & (x \geq 0)
        \end{cases}
    \end{equation*}
    and $\mu_{d,k} := \sum_{i=1}^m \mu_{\Gamma_i}$.
\end{thm}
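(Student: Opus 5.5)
Fix $x<0$ and put $t:=-x>0$ and $r_n:=t\,n^{-(k+1)/(dk)}$, so that $n^{k+1}r_n^{dk}=t^{dk}$. Edges are defined by the closed condition $\|X_i-X_j\|\le r$, so the maximum degree of $G(\CX_n;r)$ is nondecreasing and right-continuous in $r$. Hence $S_k(\CX_n)\le r$ holds iff the maximum degree of $G(\CX_n;r)$ is at least $k$, and
\begin{equation*}
P\bigl[-n^{(k+1)/(dk)}S_k(\CX_n)\le x\bigr]=P[S_k(\CX_n)\ge r_n].
\end{equation*}
This differs from $P[\Delta_n<k]$ (degrees computed in $G(\CX_n;r_n)$) only by $P[S_k(\CX_n)=r_n]$. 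That probability is zero, or can be squeezed away by monotonicity using $r_n(1\pm\varepsilon)$. So it suffices to show $P[\Delta_n<k]\to\exp(-\mu_{d,k}t^{dk})$; the case $x\ge0$ is trivial. The plan is a Poisson approximation for a $U$-statistic counting $(k+1)$-point configurations.

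Define $U_n:=\sum_{\CY\subset\CX_n,\ |\CY|=k+1}\mathbf 1\{G(\CY;r_n)\text{ has a vertex of degree }k\}$. If $\Delta_n\ge k$, a vertex of degree $\ge k$ together with any $k$ of its neighbours gives such a $\CY$. Conversely, such a $\CY$ forces $\Delta_n\ge k$. Thus $\{\Delta_n<k\}=\{U_n=0\}$.

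\textbf{Step 1 (first moment).} Every such $\CY$ induces a graph isomorphic to one of the $\Gamma_i$, or to a non-feasible graph, which occurs with probability zero. So
\begin{equation*}
E U_n=\binom{n}{k+1}\sum_i\int h_{\Gamma_i}\bigl(D_{r_n^{-1}}\textstyle\sum_{\ell}\delta_{x_\ell}\bigr)\prod_\ell f(x_\ell)\,dx_\ell .
\end{equation*}
Substitute $x_\ell=x_0+r_ny_\ell$ for $\ell\ge1$. By translation invariance of $h_\Gamma$ and its scaling $h_\Gamma(D_{r^{-1}}\cdot)$, this becomes $\frac{n^{k+1}}{(k+1)!}r_n^{dk}(1+o(1))\sum_i\int f(x_0)\prod_{\ell}f(x_0+r_ny_\ell)\,h_{\Gamma_i}(\delta_0+\sum_\ell\delta_{y_\ell})\,dx_0\,dy$. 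The integrand in $y$ has bounded support because the graph is connected, and $f$ is bounded. By the Lebesgue differentiation theorem together with dominated convergence, $\int f(x_0)\prod_\ell f(x_0+r_ny_\ell)\,dx_0\to\int f^{k+1}$ for every fixed $y$. (One can approximate $f$ in $L^{k+1}$ by continuous compactly supported functions and use the multilinear Hölder bound, valid since $f\in L^1\cap L^\infty$.) Hence $EU_n\to\mu_{d,k}t^{dk}$.

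\textbf{Step 2 (Poisson approximation).} I would apply the Stein--Chen bound for dissociated indicator sums (as in Penrose's Theorem~2.1 / the induced-subgraph Poisson approximation). This gives
\begin{equation*}
d_{TV}(U_n,\mathrm{Po}(EU_n))\le C\Bigl(\sum_{\CY}p_\CY^2+\sum_{s=1}^{k}\ \sum_{|\CY\cap\CY'|=s}E[I_\CY I_{\CY'}]\Bigr).
\end{equation*}
The first sum is $O(n^{k+1}r_n^{2dk})\to0$. For pairs sharing $s$ points, the union has $2k+2-s$ points and its graph $G(\cdot;r_n)$ is connected. By the same change of variables and boundedness of $f$, the contribution is $O(n^{2k+2-s}r_n^{d(2k+1-s)})=O(n^{(s-k-1)/k})\to0$, since $s\le k$. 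Then $P[U_n=0]\to\exp(-\mu_{d,k}t^{dk})$, which is the Weibull limit.

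\textbf{Main obstacle.} The delicate point is Step 1 under only the assumption that $f$ is bounded: there is no continuity or compact support. The convergence of $\int f(x_0)\prod f(x_0+r_ny_\ell)\,dx_0$ must be handled by density/approximation arguments rather than pointwise limits. A secondary check is that non-feasible graph types contribute nothing. The Stein--Chen error estimate itself is a routine exponent count.
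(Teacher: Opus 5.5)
Your proof is correct, and it reaches the limit by a more direct route than the paper.

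\textbf{How the two routes differ.} The paper cites Penrose's Corollary~3.6 (Lemma~\ref{Lemma.01}) for the joint Poisson limit of the induced counts $(G_n(\Gamma_i))_i$. It then uses $W_{k,n}=\sum_i q_iG_n(\Gamma_i)$, valid only with high probability, to get a compound Poisson limit for $W_{k,n}$. It separately asserts $E[\sum_{j\ge k+1}W_{j,n}]\to0$, and only then reads off $P[\sum_{j\ge k}W_{j,n}=0]\to\exp(-\beta\mu_{d,k})$.

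You observe instead that $\{\Delta_n<k\}=\{U_n=0\}$ holds exactly, with $U_n=\sum_iG_n(\Gamma_i)$ almost surely. So a plain Poisson limit for a single count suffices. The compound Poisson structure and the control of vertices of degree $>k$ never enter. You also prove the first-moment asymptotics (under mere boundedness of $f$) and the Stein--Chen estimate yourself rather than citing Penrose.

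\textbf{What each buys.} The paper's route is set up for reuse: the compound Poisson limit of $W_{k,n}$ is exactly what feeds Theorem~\ref{MainResult.05}. Yours is shorter and self-contained for the threshold statement alone.

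\textbf{Two small slips, neither affecting the conclusion.}
\begin{enumerate}
\item In the Stein--Chen bound, the first term is $\sum_{\CY}\sum_{\CY'\cap\CY\neq\emptyset}p_{\CY}p_{\CY'}$, not $\sum_{\CY}p_{\CY}^2$. It is $O(n^{2k+1}r_n^{2dk})=O(t^{2dk}/n)\to0$, so nothing changes.
\item The Lebesgue differentiation theorem controls averages over shrinking balls, not the values $f(x_0+r_ny)$ along a fixed direction. It therefore does not by itself give convergence for each fixed $y$. The justification that works is the one in your parenthesis: continuity of translation in $L^{k+1}$ plus the multilinear H\"older bound. Alternatively, integrate in $y$ first at Lebesgue points of $f$, then apply dominated convergence in $x_0$.
\end{enumerate}
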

\begin{rem}
    This result implies that there is no non-trivial almost sure limit for $S_k(\CX_n)$.
\end{rem}
As we will see later in the proof of Theorem~\ref{MainResult.01}, the number of vertices of degree $k$ in $G(\CX_n;r_n)$, denoted by $W_{k,n}$, converges not to a Poisson but to a compound Poisson random variable in the regime where $E[W_{k,n}]$ converges to some finite constant and $nr_n^d \to 0$.
This condition is equivalent to the convergence of $n^{k+1} r_n^{dk}$ to some finite constant (see Section~\ref{section.04}).
Moreover, in this regime, $E[\sum_{j \geq k+1} W_{j,n}] \to 0$ holds, implying that with high probability there is no vertex whose degree is strictly greater than $k$ (see Section~\ref{section.04}).
Therefore, to study the behavior of the vertices achieving the maximum degree in this regime, the first step is to analyze the behavior of vertices with degree exactly $k$.
For $k \geq 1$ and $(r_n)_{n \in \BN}$, let $\Phi_{k,n}$ be the point process whose support is the set of vertices of $G(\CX_n; r_n)$ having degree exactly $k$, that is,
\begin{equation*}
    \Phi_{k,n} := \sum_{X \in \CX_n} \delta_X \mathbf{1}\{\mathop{\mathrm{deg}_n} (X) = k\}.
\end{equation*}
\begin{thm}\label{MainResult.05}
    Assume that $d \geq 1$, $k \geq 1$, and $X_1$ has the bounded density $f$.
    Assume also that $(r_n)_{n \in \BN}$ satisfies $n^{k+1} r_n^{dk} \to \beta \in (0,\infty)$.
    Let $\{\Gamma_i\}_{i=1}^m$ be the set of all non-isomorphic feasible graphs with $k+1$ vertices that have at least one vertex of degree $k$.
    Then, $\Phi_{k,n}$ converges in distribution, with respect to the vague topology, to a compound Poisson point process $\widetilde{\CQ}$ with its Laplace functional
    \begin{equation*}
        \CL_{\widetilde{\CQ}}(g) = \exp\left(-\int_{\BR^d} \left(1-\sum_{i=1}^m \frac{\mu_{\Gamma_i}}{\mu_{d,k}} e^{-q_i g(x)}\right) \Lambda(dx)\right),
    \end{equation*}
    where $g : \BR^d \to [0,\infty)$ is measurable and $\Lambda(B) := \beta \sum_{i=1}^m \mu_{\Gamma_i,B}$.
    That is, $\widetilde{\CQ}$ has a Poisson point process with intensity measure $\Lambda$ as its supporting measure, and i.i.d. weights $\zeta_j$ having the distribution $P[\zeta_j = \ell] = (\sum_{1 \leq i \leq m, q_i = \ell} \mu_{\Gamma_i})/\mu_{d,k}$.
\end{thm}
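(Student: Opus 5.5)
The plan is to reduce Theorem~\ref{MainResult.05} to a Poisson approximation for \emph{clusters} (connected components on $k+1$ vertices) and then read off the compound Poisson limit by marking each cluster with the number of its degree-$k$ vertices. Here $q_i$ denotes the number of vertices of degree $k$ in $\Gamma_i$. Fix $g\in C_K^+(\BR^d)$; by the standard criterion it suffices to prove $E[e^{-\int g\,d\Phi_{k,n}}]\to\CL_{\widetilde{\CQ}}(g)$.

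First, I reduce the event structure to isolated components. When $n^{k+1}r_n^{dk}\to\beta$ we have $nr_n^d\to0$. A first-moment computation shows that, with probability tending to one, two kinds of configurations do not occur in $G(\CX_n;r_n)$:
\begin{itemize}
\item a vertex of degree $\ge k+1$;
\item a degree-$k$ vertex lying in a connected component with more than $k+1$ vertices.
\end{itemize}
For the second, such a component contains a connected subgraph on $k+2$ vertices in which one vertex has degree $k$ within the subgraph. The expected count of these is $O(n^{k+2}r_n^{d(k+1)})=O(n^{k+1}r_n^{dk}\cdot nr_n^d)\to0$.

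Hence, with high probability, every degree-$k$ vertex $X$ lies in a component $C$ with exactly $k+1$ vertices. The graph $G(C;r_n)$ is isomorphic to some $\Gamma_i$, since a non-feasible shape has probability zero. So, up to an event of vanishing probability,
\[
\int g\,d\Phi_{k,n}=\sum_{C}\sum_{X\in C,\ \deg X=k} g(X).
\]
Here $C$ runs over components of size $k+1$ containing a degree-$k$ vertex. Because $r_n\to0$ and $g$ is uniformly continuous, I replace $g(X)$ by $g(x_C)$ for a representative point $x_C$ of $C$, say its leftmost point. This changes the sum by $o(1)$ times the number of clusters hitting $\mathrm{supp}\,g$, which is tight. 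The functional then becomes $\sum_C q_{i(C)}\,g(x_C)$.

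Second, I define the marked point process $\Xi_n:=\sum_{C}\delta_{(x_C,i(C))}$ on $\BR^d\times\{1,\dots,m\}$. I will show that $\Xi_n$ converges to a Poisson process with intensity $\beta\,\mu_{\Gamma_i,dx}$ on the $i$-th layer. The expected number of components isomorphic to $\Gamma_i$ with representative in $B$ is
\[
\binom{n}{k+1}\int h_{\Gamma_i}\cdots\,(1-p)^{n-k-1}.
\]
Changing variables $x_\ell=x+r_ny_\ell$, using Lebesgue differentiation for $f$, and noting that the isolation factor tends to $1$ since $nr_n^d\to0$, this converges to $\beta\mu_{\Gamma_i,B}$; the factor $1/(k+1)!$ accounts for labellings. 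For the Poisson convergence I will apply a dissociated-sum Poisson process approximation (Arratia--Goldstein--Gordon, or Penrose's Theorem~2.1 style). The indicators of $(k+1)$-subsets forming an isolated $\Gamma_i$-component have dependency neighbourhoods consisting of subsets within distance $O(r_n)$. To pass from binomial to simpler computations I will Poissonize to $\CP_n$ and de-Poissonize at the end.

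Given the marked Poisson limit, the Laplace functional of $\sum_i q_i\sum g(x)$ is
\[
\exp\Bigl(-\sum_i\int(1-e^{-q_ig})\,\beta\,d\mu_{\Gamma_i}\Bigr),
\]
which is exactly $\CL_{\widetilde{\CQ}}(g)$ after writing $\Lambda=\beta\sum_i\mu_{\Gamma_i,\cdot}$ and weights $\mu_{\Gamma_i}/\mu_{d,k}$.

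The main obstacle is bounding the second-order terms $b_1,b_2$ in the Poisson approximation. For $b_2$, two distinct overlapping or nearby $(k+1)$-sets that are both candidate components must jointly span at least $k+2$ points in a connected region. Their expected count is $O(n^{k+2}r_n^{d(k+1)})\to0$, by the same computation as in the first step, and this is precisely where $nr_n^d\to0$ is used. For $b_1$, the bound is $O\bigl((n^{k+1}r_n^{dk})^2\, r_n^d\bigr)\to0$. The isolation requirement makes the indicators negatively correlated across neighbourhoods; I would handle it by working with Poisson input so that disjoint regions are independent, and by restricting to the compact set $\mathrm{supp}\,g$, so that boundedness of $f$ suffices without compact support.
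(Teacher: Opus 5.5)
Your overall structure matches the paper's. With high probability every degree-$k$ vertex sits in a $(k+1)$-vertex cluster isomorphic to some $\Gamma_i$, and each degree-$k$ vertex can be moved to a cluster representative at cost $o(1)$ times a tight count. A multivariate Poisson limit for representatives marked by $i$ then gives the compound Poisson Laplace functional with weights $q_i$.

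\textbf{The gap is in the Poisson approximation step.} You index by $(k+1)$-subsets forming an \emph{isolated} $\Gamma_i$-component. Isolation makes each indicator depend on all $n$ points, so for binomial input disjoint subsets are not independent, and the set-intersection graph is not a dependency graph. Your ``subsets within distance $O(r_n)$'' is a random adjacency relation, which the Arratia--Goldstein--Gordon bound (Lemma~\ref{Lemma.05}) does not allow.

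Poissonizing does not fix this by itself. Under $\CP_n$ the collection of $(k+1)$-subsets is random, so there is no fixed index set for a dissociated-sum bound. You would need an extra device, such as the cube discretization of Proposition~\ref{Proposition.05}. The de-Poissonization is also not automatic, because isolated-component counts are not monotone in the point set. You would have to bound the expected number of clusters touched by the $O_P(\sqrt n)$ discrepant points, roughly $n^{1/2}\cdot n^k r_n^{dk}\to 0$.

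Your $b_2$ heuristic also does not fit your own indicators. Two distinct isolated components can never share a point, and nearby disjoint ones are \emph{not} joined. That contribution is therefore $O(\beta^2 r_n^d)$, not a count of connected $(k+2)$-point sets.

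\textbf{The paper avoids all of this by dropping isolation.} Its $\widetilde{\Phi}_{n,\Gamma_i}$ counts induced $\Gamma_i$-subgraphs $\CY\subset\CX_n$, each represented by the lexicographically smallest degree-$k$ vertex of $\CY$. Each indicator depends only on its own $k+1$ points. So intersection of index sets is a genuine dependency graph for i.i.d.\ points, and Lemma~\ref{Lemma.05} applies directly to $\CX_n$, with $b_1=O(n^{2k+1}r_n^{2dk})$ and $b_2=O\bigl(\sum_{h=1}^k n^{k+1}r_n^{dk}(nr_n^d)^{k-h+1}\bigr)$. It is in this setting that overlapping subgraphs span a connected set of at least $k+2$ points.

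The identification with $\Phi_{k,n}$ is exactly your first step. With high probability there is no connected subgraph on $k+2$ vertices, so every induced $\Gamma_i$-subgraph is a whole component and its internal degrees are the degrees in $G(\CX_n;r_n)$. With that substitution the rest of your argument goes through, and no Poissonization is needed.
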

Since the limit of $\Phi_{k,n}$ is Poisson, the number of vertices with degree exactly $k$ is zero with positive probability.
On the event that no vertex has degree $k$, it is necessary to examine the behavior of $\Phi_{k-1,n}$ to understand the vertices achieving the maximum degree.
We present a more general result which covers the regime discussed in Theorem~\ref{MainResult.05}.
\begin{thm}\label{MainResult.06}
    Assume that $d \geq 1$, $k \geq 1$, and $X_1$ has the bounded density $f$.
    Assume also that $n^{k+1} r_n^{dk} \to \infty$ and $nr_n^d \to 0$.
    Let $\{\Gamma_i\}_{i=1}^m$ be the set of all non-isomorphic feasible graphs with $k+1$ vertices that have at least one vertex of degree $k$.
    Then, for every continuity point $x_0$ of $f$, the spatially scaled point process $D_{n^{(k+1)/d} r_n^k} (T_{-x_0} \Phi_{k,n})$ converges in distribution, with respect to the vague topology, to a compound Poisson point process $\widetilde{\CH}_{x_0}$ with its Laplace functional
    \begin{equation*}
        \CL_{\widetilde{\CH}_{x_0}}(g) = \exp\left(-\int_{\BR^d} \left(1-\sum_{i=1}^m \frac{\mu_{\Gamma_i}}{\mu_{d,k}} e^{-q_i g(x)}\right) \lambda_{x_0}\,dx\right),
    \end{equation*}
    where $g : \BR^d \to [0,\infty)$ is measurable and
    \begin{equation*}
        \lambda_{x_0} := \frac{f(x_0)^{k+1}}{(k+1)!} \sum_{i=1}^m \int_{(\BR^d)^k} h_{\Gamma_i} \left(\delta_{\bm{0}} + \sum_{\ell=1}^k \delta_{x_{\ell}}\right) \,\prod_{\ell=1}^k dx_{\ell}.
    \end{equation*}
    That is, $\widetilde{\CH}_{x_0}$ has a homogeneous Poisson point process of intensity $\lambda_{x_0}$ as its supporting measure, and i.i.d. weights $\zeta_j$ having the distribution $P[\zeta_j = \ell] = (\sum_{1 \leq i \leq m, q_i = \ell} \mu_{\Gamma_i})/\mu_{d,k}$.
\end{thm}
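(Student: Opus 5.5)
The plan is to reduce the statement to a Poisson approximation for \emph{clusters} rather than for individual degree-$k$ vertices. When $nr_n^d\to0$, a vertex of degree exactly $k$ lies with high probability in a connected component of $G(\CX_n;r_n)$ with exactly $k+1$ vertices, isomorphic to some $\Gamma_i$. Write $q_i$ for the number of vertices of degree $k$ in $\Gamma_i$. Then each such component contributes $q_i$ points, all within distance $2r_n$ of one another. Under the scaling $a_n:=n^{(k+1)/d}r_n^k$ these points collapse to a single location, since $a_n r_n = (n r_n^d)^{(k+1)/d}\cdot r_n^{(k+1)-(k+1)}\cdot\ldots\to0$; more precisely, $a_n r_n=(n^{k+1}r_n^{d(k+1)})^{1/d}=(nr_n^d)^{(k+1)/d}\to0$. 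So I would introduce the auxiliary process
\begin{equation*}
\Psi_n:=\sum_{i=1}^m\ \sum_{\CY\subset\CX_n,\ |\CY|=k+1}\delta_{\min\CY}\,h_{\Gamma_i}\bigl(D_{1/r_n}\CY\bigr)\,\mathbf{1}\{\CY \text{ is a component of } G(\CX_n;r_n)\}\,\mathbf{1}_{\{i\}}(\cdot),
\end{equation*}
a marked process with one point per component, carrying mark $i$, where $\min\CY$ is a lexicographically minimal point.

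The proof then runs in three steps.

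\textbf{Step 1.} Show that $D_{a_n}T_{-x_0}\Phi_{k,n}$ and the weighted version $\sum (q_i\text{-weighted})\,D_{a_n}T_{-x_0}\Psi_n$ are asymptotically equivalent on compacts. The error consists of degree-$k$ vertices lying in components with more than $k+1$ points, together with the spread of at most $2r_n a_n\to0$ inside a cluster. By Mecke's formula, the expected number of components of size $k+2$ or larger inside $x_0+a_n^{-1}K$ is $O\bigl(a_n^{-d}n^{k+2}r_n^{d(k+1)}\bigr)=O(nr_n^d)\to0$. For a Laplace functional with $g$ continuous and compactly supported, uniform continuity of $g$ handles the spread.

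\textbf{Step 2.} Compute first moments for each mark. Apply Mecke's formula with substitution $y_\ell=x+r_n z_\ell$ and then $x=x_0+a_n^{-1}u$. The expected number of marked-$i$ points in $x_0+a_n^{-1}B$ equals
\begin{equation*}
\frac{n^{k+1}}{(k+1)!}\,a_n^{-d}r_n^{dk}\int_B f(x_0+a_n^{-1}u)^{k+1}\,du\int h_{\Gamma_i}\,(1+o(1)).
\end{equation*}
The isolation factor $(1-O(r_n^d))^{n}\to1$ since $nr_n^d\to0$. The prefactor $n^{k+1}r_n^{dk}a_n^{-d}=1$. Continuity of $f$ at $x_0$ together with dominated convergence then gives $|B|\,\lambda_{x_0}\,\mu_{\Gamma_i}/\mu_{d,k}$. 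The factor $1/(k+1)!$ versus the $1/j!$ in $\mu_\Gamma$ needs bookkeeping, since choosing $\min\CY$ fixes the ordering. Here $a_n\to\infty$ is guaranteed by $n^{k+1}r_n^{dk}\to\infty$ and is what makes the window shrink.

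\textbf{Step 3.} Prove Poisson convergence of the marked process $D_{a_n}T_{-x_0}\Psi_n$ to a Poisson process on $\BR^d\times\{1,\dots,m\}$ with intensity $\lambda_{x_0}dx\otimes(\mu_{\Gamma_i}/\mu_{d,k})$. I would use the Poisson process approximation for functionals of binomial processes (Barbour--Brown / Penrose's dissociated-sum bound, or the Stein method of Decreusefond--Schulte--Thäle), checked on a fixed compact window. The required second-order term counts pairs of distinct components within distance $O(r_n)$ that share a point or interact. In a component-based indicator, overlapping $(k+1)$-sets cannot both be components unless they are equal. Dependence through the "isolation" factor occurs only for sets at distance $\le 2r_n$, and these form a configuration with at least $k+2$ points within $O(r_n)$, contributing $O(nr_n^d)\to0$ after scaling. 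To avoid the binomial dependence I would first Poissonize, replacing $\CX_n$ by $\CP_n$, where Mecke computations are exact, and then de-Poissonize by a coupling, whose cost is negligible on a window of volume $a_n^{-d}\to0$. The mapping theorem then turns marked Poisson convergence into the compound Poisson limit with the stated Laplace functional, by summing $q_i$ at each atom.

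The main obstacle is Step 3 in the regime $n^{k+1}r_n^{dk}\to\infty$ with $nr_n^d\to0$ uniformly. Specifically, the bound on the dependency term must be $o(1)$ using only $nr_n^d\to0$, with no rate. I expect to handle this by exploiting that all interacting configurations have $\ge k+2$ points in a ball of radius $O(r_n)$, so every error term carries an extra factor $nr_n^d$ relative to the main term, which has order $1$ after scaling.
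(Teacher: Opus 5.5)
Your argument is correct and follows the paper's overall architecture: reduce $\Phi_{k,n}$ to $(k+1)$-point clusters weighted by $q_i$, compute first moments with the substitution $x_\ell=x+r_ny_\ell$, $x=x_0+a_n^{-1}u$, apply a Stein--Chen bound, and assemble the compound Poisson limit. The difference is the cluster statistic, and that changes what the Poisson approximation step costs.

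The paper uses the induced-subgraph counts $\widetilde\Phi_{n,\Gamma_i}$. Each indicator depends only on the $k+1$ points involved, so the count is a dissociated sum over $(k+1)$-subsets of $\{1,\dots,n\}$. Lemma~\ref{Lemma.05} then applies directly to the binomial process, with adjacency given by overlap (Proposition~\ref{Proposition.04}). The main price is bounding overlapping pairs, which contributes $\sum_{h=1}^{k}(nr_n^d)^{k+1-h}\to0$. Identifying $\sum_i q_i\widetilde\Phi_{n,\Gamma_i}$ with $\Phi_{k,n}$ near the window then uses exactly your Step~1 estimate (no connected $(k+2)$-set, expected number $O(nr_n^d)$).

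Your component indicators make overlapping pairs vanish for free. However, the isolation requirement makes each indicator depend on the whole sample, so there is no dependency graph on $\CX_n$. That is what forces you into Poissonization, a spatially discretized Stein--Chen bound in the style of Proposition~\ref{Proposition.05}, and de-Poissonization. Since Step~1 is needed either way, the paper's route is more economical. Yours buys automatic vanishing of overlaps and a clean marked-process/mapping formulation of the compound limit.

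One justification needs correcting. The de-Poissonization cost is not small because the window has volume $a_n^{-d}\to0$. The expected number of points of $\CX_n\triangle\CP_n$ in the window is of order
\[
\sqrt{n}\,a_n^{-d}=n^{-1/2}(nr_n^d)^{-k},
\]
which diverges, for instance, when $nr_n^d=n^{-\alpha}$ with $1/(2k)<\alpha<1/k$. What actually makes the cost negligible is that a discrepant point changes the cluster count only in two ways:
\begin{itemize}
\item it belongs to a $(k+1)$-cluster in the window, with expected number $O(n^kr_n^{dk}a_n^{-d})=O(1/n)$ per point;
\item it lies within $r_n$ of such a cluster, with expected number $O(r_n^d)$ per point.
\end{itemize}
Hence the expected discrepancy is $O\bigl(E|N_n-n|\,(n^{-1}+r_n^d)\bigr)=O(n^{-1/2}+\sqrt{n}\,r_n^d)\to0$.
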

\begin{rem}
    If $f(x_0) = 0$ in Theorem~\ref{MainResult.06}, the scaled point process converges to zero.
    This suggests that the scaling centered at $x_0$ is inappropriate.
    However, since our primary objective is to analyze the point configuration that achieves the maximum degree and such an $x_0$ is irrelevant, we do not address this further in the present paper.
\end{rem}
In this regime, the maximum degree $\Delta_n$ is concentrated on two points $\{k-1,k\}$ with high probability~\cite[Theorem~6.3]{Penrose.2003}.
Therefore, Theorems \ref{MainResult.05} and \ref{MainResult.06} allow us to identify the point configuration that achieves the maximum degree.
In particular, if $n^{k+1}r_n^{dk} \to \infty$ and $n^{k+2}r_n^{d(k+1)} \to 0$ for some $k \geq 1$ and $f$ is continuous, then $P[\Delta_n = k] \to 1$ and it follows that
\begin{equation*}
    D_{n^{(k+1)/d} r_n^k} (T_{-x_0} \CM_n) \xrightarrow{d} \widetilde{\CH}_{x_0}
\end{equation*}
for all $x_0 \in \BR^d$, where
\begin{equation*}
    \CM_n := \sum_{X \in \CX_n} \delta_X \mathbf{1}\{\mathop{\mathrm{deg}_n} (X) = \Delta_n\}
\end{equation*}
is the point process whose support is the set of points of $G(\CX_n; r_n)$ that achieves the maximum degree.

\subsection{The case of degree parameter $k_n$ growing with $n$}
\begin{thm}\label{MainResult.02}
    Assume that $d \geq 2$, $(k_n)_{n \in \BN}$ is a $\BN$-valued sequence satisfying $k_n \to \infty$ as $n \to \infty$ and $k_n = o(\log n)$.
    Assume also that $X_1$ has a uniform distribution on the $d$-dimensional unit cube (that is, $f = \mathbf{1}_{[-1/2,1/2]^d}$).
    Let $\theta$ be the volume of a $d$-dimensional unit ball.
    Then, as $n \to \infty$,
    \begin{equation*}
        -en^{1+1/k_n}\theta S_{k_n}(\CX_n)^d \exp\left(-\frac{n\theta S_{k_n}(\CX_n)^d}{k_n}\right) + k_n + \frac{1}{2} \log k_n + \log \sqrt{2\pi} \xrightarrow{d} Z,
    \end{equation*}
    where $Z$ is a random variable with the Gumbel distribution $P[Z \leq x] = \exp(-\exp(-x))$.
\end{thm}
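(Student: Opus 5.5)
The plan is to reduce the statement to a Poisson approximation for the number of high-degree vertices at a deterministic radius. For fixed $x\in\BR$, define $r_n=r_n(x)$ by requiring that $\lambda_n:=n\theta r_n^d$ satisfies
\begin{equation*}
  -e\,n^{1/k_n}\lambda_n e^{-\lambda_n/k_n}+k_n+\tfrac12\log k_n+\log\sqrt{2\pi}=x .
\end{equation*}
Since the left side is monotone in $\lambda_n$ on the relevant range, the event that the normalized statistic is $\le x$ coincides with $\{S_{k_n}(\CX_n)\ge r_n\}$. Up to a null event, this is $\{\Delta_n(r_n)<k_n\}$: no vertex of $G(\CX_n;r_n)$ has degree $\ge k_n$. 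Let $V_n:=\sum_{j\ge k_n}W_{j,n}$ be the number of vertices of degree at least $k_n$. It then suffices to prove $P[V_n=0]\to\exp(-e^{-x})$.

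\textbf{Step 1 (first moment).} Away from a boundary layer of width $r_n$, a point has degree $\mathrm{Bin}(n-1,\theta r_n^d)$. Since $k_n=o(\log n)$, the defining relation forces $\lambda_n\approx k_n n^{-1/k_n}\cdot e^{-1}(\cdots)\to0$. Hence
\begin{equation*}
  E[V_n]\sim n\,P[\mathrm{Bin}(n-1,\theta r_n^d)\ge k_n]\sim n\frac{\lambda_n^{k_n}e^{-\lambda_n}}{k_n!},
\end{equation*}
because the tail is dominated by its first term when $\lambda_n/k_n\to0$. Taking logarithms and applying Stirling's formula $\log k_n!=k_n\log k_n-k_n+\tfrac12\log(2\pi k_n)+o(1)$, I will check that the normalization in the statement is exactly what makes $\log E[V_n]\to -x$, so that $E[V_n]\to e^{-x}$. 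This is a Taylor expansion of $\log(\lambda_n/k_n)$ around $\log(n^{-1/k_n}/e)$; the linearized quantity $e n^{1/k_n}\lambda_n/k_n$ is $1+O(\cdot)$, and the second-order error must be shown to be $o(1)$ using $k_n=o(\log n)$. The uniform density matters only here. The boundary layer has volume $O(r_n)$, and near the boundary the degree distribution has smaller mean, so boundary points contribute $o(1)$ to $E[V_n]$.

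\textbf{Step 2 (Poisson approximation).} I will Poissonize, replacing $\CX_n$ by $\CP_n$, and later de-Poissonize by the standard coupling with $N_n$ concentrated in $n\pm n^{3/4}$. The coupling changes $V_n$ only with probability $o(1)$, since an extra or missing point affects only degrees within distance $r_n$, and the number of affected vertices having degree $\ge k_n-1$ is negligible. Next, partition the cube into cubes of side $\asymp r_n$ and use the dependency-graph Poisson approximation (Arratia--Goldstein--Gordon, as in Penrose~\cite{Penrose.2003}, Theorem~2.1) for the indicators of high-degree vertices. This gives
\begin{equation*}
  d_{TV}(V_n,\mathrm{Po}(E V_n))\le b_1+b_2 .
\end{equation*}
The term $b_1$ is $O(E[V_n]^2\, r_n^d)\to0$. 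The term $b_2$ is the expected number of ordered pairs of distinct vertices $X\ne Y$ with $\|X-Y\|\le 3r_n$ and both degrees $\ge k_n$.

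\textbf{Step 3 (no clumping; the main obstacle).} The key is $b_2\to0$, and this is exactly where the compound Poisson behaviour of Theorem~\ref{MainResult.05} must disappear. Using the Mecke formula, I will bound $b_2$ by
\begin{equation*}
  n^2\int P\big[\text{both } B(x,r_n) \text{ and } B(y,r_n) \text{ contain}\ge k_n \text{ further points}\big]\,dy .
\end{equation*}
Set $s=\|x-y\|$. Split according to the number $j$ of points lying in $B(x,r_n)\cap B(y,r_n)$. Since $\lambda_n\to0$, the dominant configurations have exactly $k_n$ or so points in $B(x,r_n)$, and most of them must also lie in the intersection. In $d\ge2$, the intersection has volume at most $\theta r_n^d(1-c\,s/r_n)$ and the complementary lune has volume $\ge c\,s\,r_n^{d-1}$.

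The resulting bound is $E[V_n]\int_0^{3}\big((1-cu)^{k_n}+\text{(terms with } \ge1 \text{ extra point)}\big)\,n\theta r_n^d\,u^{d-1}\,du$. The first piece is $O(E[V_n]\,\lambda_n k_n^{-d})\to0$. The extra-point terms carry an additional factor $\lambda_n\to0$. The delicate part is the bookkeeping of the combinatorial factors $\binom{k_n}{j}$ uniformly in $k_n\to\infty$, and I expect this to be the hardest step. I will try the crude bound $(1-cu)^{j}$ for the probability that the $j$ shared points fall in the intersection, summed over $j$ with Poisson weights, and check that the contributions from $j<k_n$ are geometrically small in $\lambda_n/k_n$.

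\textbf{Conclusion.} Combining the three steps gives $P[V_n=0]\to\exp(-e^{-x})$ for every $x$. This is the asserted convergence to the Gumbel distribution.
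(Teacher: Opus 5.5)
Your route is the paper's: choose $r_n$ so that the mean number of vertices of degree $\ge k_n$ tends to $e^{-x}$ (your implicit equation is the paper's Lambert-$W$ choice with $\beta=e^{-x}$), Poissonize, bound the pair term of a dependency-graph Poisson approximation, and de-Poissonize. Steps~1--2 are fine, since they only use $\lambda_n/k_n\to0$. The gap is in Step~3, which you rightly call the crux; your computation there is wrong in two ways.

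First, you lose a factor $k_n/\lambda_n$. For $\|x-y\|\le r_n$ the Mecke point $y$ is itself a neighbour of $x$, so $x$ needs only $k_n-1$ further points, and $nP[\mathrm{Po}(\lambda_n)=k_n-1]\approx (k_n/\lambda_n)\,nP[\mathrm{Po}(\lambda_n)=k_n]$. Equivalently, a degree-$k_n$ vertex has $k_n$ neighbours, not $\lambda_n$. The correct order is $b_2\asymp E[V_n]\,k_n\int_0^1(1-cu)^{k_n}u^{d-1}\,du\asymp k_n^{1-d}$. This is the paper's bound $c_0(k_n^{1-d}+k_nr_n)$ in Lemma~\ref{Lemma.03}, and it is exactly where $d\ge2$ enters. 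Your bound $O(E[V_n]\lambda_nk_n^{-d})$ would give $b_2\to0$ also for $d=1$, which is false. There the nearest neighbour of a degree-$k_n$ vertex lies at distance $\asymp r_n/k_n$ and has degree $\ge k_n$ with probability bounded below, so clumping persists. Your lune estimate also holds for $d=1$, so it cannot be what rules that case out.

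Second, $\lambda_n\to0$ is false in general. Since $\lambda_n\sim k_nn^{-1/k_n}/e$, it tends to $0$ only when $k_nn^{-1/k_n}\to0$ (cf.\ Remark~\ref{Remark.01}); for $k_n=2\log n/\log\log n$ it diverges. Only $\lambda_n=o(k_n)$ is available. Even when $\lambda_n\to0$, extra points do not cost a factor $\lambda_n$ each. Suppose $x$ has $m$ neighbours in $B_x\setminus B_y$ and $y$ compensates with $m$ points in $B_y\setminus B_x$. Relative to $m=0$, the weight is of order $(k_n\lambda_nu^2)^m/(m!)^2$, which is large once $u\gg(k_n\lambda_n)^{-1/2}$. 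Summed over $m$ this gives $e^{O(u\sqrt{k_n\lambda_n})}$, which is beaten by $e^{-cuk_n}$ only because $\lambda_n=o(k_n)$. The paper handles this cleanly by conditioning on $\CP(B_y)=k_n-1$. Then $y$'s points in $B_y\setminus B_x$ are binomial with mean $\asymp k_nF$, $x$'s compensating points in $B_x\setminus B_y$ are Poisson with mean $\asymp\lambda_nF$, and Chernoff bounds give $e^{-ck_nF}$ because $k_n/\lambda_n\to\infty$.

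Finally, there is a smaller omission in your reduction. The map $\lambda\mapsto\lambda e^{-\lambda/k_n}$ increases only up to $\lambda=k_n$. Hence $\{\text{statistic}\le x\}=\{r_n\le S_{k_n}(\CX_n)\le r_n^*\}$, where $n\theta(r_n^*)^d>k_n$ is the second root. You must therefore show $P[n\theta S_{k_n}(\CX_n)^d>k_n]\to0$. This is the paper's Lemma~\ref{Lemma.04}, derived from Penrose's law of large numbers for the maximum degree; ``on the relevant range'' does not supply it.
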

\begin{rem}\label{Remark.01}
    It is little annoying that the term $\exp(-n\theta S_{k_n}(\CX_n)^d/k_n)$ presents in the above statement.
    By restricting our attention to $k_n$ satisfying $k_n n^{-1/k_n} \to 0$, or equivalently $\log k_n - k_n^{-1}\log n \to -\infty$, this can be eliminated and the result is
    \begin{equation*}
        -en^{1+1/k_n}\theta S_{k_n}(\CX_n)^d + k_n + \frac{1}{2} \log k_n + \log \sqrt{2\pi} \xrightarrow{d} Z,
    \end{equation*}
    where $Z$ is a Gumbel random variable.
    For the treatment of this case, see Remark 4.2.
    The assumption on $k_n$ is satisfied, for example, when $k_n \leq \log n/\log \log n$ for all large enough $n$, but it fails when $k_n \geq (1+\varepsilon)\log n/\log \log n ~ (\varepsilon > 0)$ for all large enough $n$.
\end{rem}
The above result can be extended to more general densities.
Consider a class of densities $f$ satisfying following two conditions:
\begin{itemize}[labelsep=1pt, itemsep=-1pt]
    \item For some $\rho_0 > 0$ and $m \in \BN$, $f$ is $C^{2m}$ on $B(\bm{0}; \rho_0)$.
    Further, $\min\{\ell : \partial^{\ell}f (\bm{0}) \neq 0\} = 2m$, $f(\bm{0}) = f_{\max}$, the $2m$-th derivative $\partial^{2m}f(\bm{0})$ is a negative definite multilinear form on $\BR$ and $\bm{0}$ is the unique maximum point of $f$.
    \item For some $\rho_0 > \rho_1 > 0$, there exists $\varepsilon_0 > 0$ such that for all $x \in \BR^d \setminus B(\bm{0}; \rho_1)$, $f(x) \leq (1-\varepsilon_0)f_{\max}$.
\end{itemize}
\begin{thm}\label{MainResult.03}
    Assume that $d \geq 2$, $(k_n)_{n \in \BN}$ is a $\BN$-valued sequence satisfying $k_n \to \infty$ as $n \to \infty$ and $k_n = o(\log n)$.
    Assume also that a density $f$ of $X_1$ satisfies the above two conditions.
    Then, as $n \to \infty$,
    \begin{equation*}
        \begin{aligned}
            -e f_{\max}n^{1+1/k_n}\theta S_{k_n}(\CX_n)^d \exp\left(-\frac{f_{\max}n\theta S_{k_n}(\CX_n)^d}{k_n}\right)&\\
            + k_n + \left(\frac{1+(d/m)}{2}\right)\log k_n + \log \frac{\sqrt{2\pi}}{\gamma f_{\max}^{1+d/(2m)}} &\xrightarrow{d} Z,
        \end{aligned}
    \end{equation*}
    where $\gamma := \int_{\BR^d} \exp(\partial^{2m}f(\bm{0})[z]/((2m)!)) \,dz$ and $Z$ is a random variable with the Gumbel distribution $P[Z \leq x] = \exp(-\exp(-x))$.
\end{thm}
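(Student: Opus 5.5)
Fix $x\in\BR$ and set $r_n = r_n(x)$ so that the left-hand side equals $x$. In other words, $t_n := f_{\max} n\theta r_n^d$ solves
\begin{equation*}
e\, n^{1/k_n}\, t_n\, e^{-t_n/k_n} = k_n + \tfrac{1+d/m}{2}\log k_n + \log\frac{\sqrt{2\pi}}{\gamma f_{\max}^{1+d/(2m)}} - x .
\end{equation*}
The map $t\mapsto t e^{-t/k_n}$ is increasing for $t<k_n$. Since $S_{k_n}(\CX_n)\le r_n$ iff $\Delta_n(r_n)\ge k_n$, the event $\{\text{LHS}\le x\}$ equals $\{\Delta_n(r_n) < k_n\}$ up to monotonicity bookkeeping. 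It therefore suffices to show $P[\text{no vertex of } G(\CX_n;r_n) \text{ has degree}\ge k_n]\to\exp(-e^{-x})$. Equivalently, letting $W_n:=\sum_{X\in\CX_n}\mathbf 1\{\deg_n(X)\ge k_n\}$, we need $P[W_n=0]\to e^{-e^{-x}}$. Note $t_n\approx k_n n^{-1/k_n}/e\cdot(1+o(1))$, which may be unbounded but is $o(k_n)$ because $k_n=o(\log n)$. Hence we are in the lower-tail regime $nr_n^d\ll k_n$, and the paper's Poisson claim for this regime indicates no clustering.

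The first step is the expectation asymptotics. We have $E[W_n]=n\int f(y)P[\mathrm{Bin}(n-1,p_n(y))\ge k_n]\,dy$ with $p_n(y)=\int_{B(y,r_n)}f$. Since $np_n(y)\le t_n=o(k_n)$, the binomial tail is dominated by its first term, and by Poissonization and Stirling it equals
\begin{equation*}
\frac{(np_n)^{k_n}e^{-np_n}}{k_n!}(1+o(1)), \qquad k_n!=\sqrt{2\pi k_n}\,(k_n/e)^{k_n}(1+o(1)).
\end{equation*}
Near $\bm 0$ we write $p_n(y)=\theta r_n^d f(y)(1+o(1))$ and $f(y)=f_{\max}\exp(\partial^{2m}f(\bm0)[y]/((2m)!f_{\max})+\dots)$. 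Raising to the power $k_n$, the integral localises on the scale $|y|\sim k_n^{-1/(2m)}$ and produces the factor $\gamma f_{\max}^{d/(2m)}k_n^{-d/(2m)}$. The second density condition makes the contribution away from $B(\bm0;\rho_1)$ negligible, being a factor $(1-\varepsilon_0)^{k_n}$ smaller. Collecting terms,
\begin{equation*}
E[W_n]\approx \frac{\gamma f_{\max}^{d/(2m)}}{\sqrt{2\pi}}\, k_n^{-1/2-d/(2m)}\, n\left(\frac{e\,t_n e^{-t_n/k_n}}{k_n}\right)^{k_n}.
\end{equation*}
The $e^{-t_n/k_n}$ correction is exactly what absorbs $e^{-np_n}$ once we take the $k_n$-th power. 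Substituting the defining equation for $t_n$ and using $(1+a/k_n)^{k_n}\to e^{a}$ with $a=\tfrac{1+d/m}{2}\log k_n+\dots-x$ (which needs $a=o(k_n^{1/2})$, fine since $a=O(\log k_n)$) gives $E[W_n]\to e^{-x}$. The delicate points are the lower-order errors: the variation of $p_n$ inside balls, boundary effects, and $k_n\,o(1)$ terms. Each must be $o(1)$ after exponentiation. This requires relative errors in $p_n$ of order $o(1/k_n)$, which holds because $r_n\to0$ polynomially fast.

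The second step is Poisson approximation. I would apply the dissociated/local dependence Stein--Chen bound (Arratia--Goldstein--Gordon, or Penrose's Theorem~2.1). It works most cleanly after Poissonizing to $\CP_n$ and de-Poissonizing by standard coupling, since $W$ changes little. Points at distance $>2r_n$ give independent indicators in the Poisson setting, so
\begin{equation*}
d_{TV}(W_n,\mathrm{Po}(EW_n))\le b_1+b_2,
\end{equation*}
where $b_1\lesssim E[W_n]^2\,\theta(3r_n)^d n$, which tends to $0$ since $nr_n^d=o(k_n)=o(\log n)$ while $n\to\infty$. The term $b_2$ is the expected number of pairs $X\ne Y$ with $\|X-Y\|\le 2r_n$ and both of degree $\ge k_n$. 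This is the main obstacle, and it is exactly where $k_n\to\infty$ is used, in contrast to Theorem~\ref{MainResult.05}. I would bound $b_2$ by splitting on the overlap of the two balls. With $j$ shared points in $B(X,r_n)\cap B(Y,r_n)$, the two balls need $2k_n-j$ points in a region of volume $\theta r_n^d(2-\alpha)$, where $\alpha$ is the overlap fraction. The resulting sum is at most
\begin{equation*}
n\,E[W_n]\,r_n^d \sup_\alpha \frac{(t_n(2-\alpha))^{2k_n-j}/(2k_n-j)!}{t_n^{k_n}/k_n!}.
\end{equation*}
For $d\ge2$, the overlap fraction satisfies $1-\alpha\ge c\|X-Y\|/r_n$, and the union of the balls forces at least $k_n+1$ distinct points in an excess region of relative volume $\ge c s$, where $s=\|X-Y\|/r_n$. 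Integrating over $s$ yields an extra factor like $\int_0^2 s^{d-1}\min(1,(cs\,t_n/k_n)^{\cdot})\,ds$ times $k_n$-dependent combinatorics. The aim is to show this tends to $0$ because a second high-degree neighbour requires an extra $\Omega(1)$ point in a thin crescent, costing a factor $\lesssim k_n^{-1}$ per required configuration. The assumption $d\ge2$ appears here, through the volume of the symmetric difference of the two balls. I expect this estimate, together with the uniform control of the binomial-to-Poisson errors at the precision needed in the first step, to be the technical core. Once $b_1,b_2\to0$, we get $P[W_n=0]\to\exp(-e^{-x})$, completing the proof.
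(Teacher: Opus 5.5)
Your plan is the paper's: tune $r_n$ so that the expected number of vertices of degree $\ge k_n$ tends to $e^{-x}$, localize the first moment at scale $k_n^{-1/(2m)}$ around $\bm 0$, apply Stein--Chen to the Poissonized count, de-Poissonize by monotone coupling, and invert using monotonicity of $t\mapsto te^{-t/k_n}$. Solving the target equation for $t_n$ directly is equivalent to the paper's $W_0$-parametrization with $\beta=e^{-x}$.

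\textbf{The gap.} It is the step you yourself call the technical core, $b_2\to0$ (the paper's $I_2(n)\to0$). You do not prove it, and what you sketch would not give it.

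\emph{The union-count bound cannot vanish.} Your displayed bound keeps only the total number of points in $B(X;r_n)\cup B(Y;r_n)$. For adjacent $X,Y$ the two degree conditions need only $2k_n-2-j$ further points. That is just $k_n-1$ when all other neighbours of $X$ lie in the lens. So the best bound available from the union count alone is at least $P[\CP_n(B_x^n)\ge k_n-1]$. Integrating gives $n^2\int F(dx)\,F(B_x^n)\,P[\CP_n(B_x^n)\ge k_n-1]\asymp t_n\cdot (k_n/t_n)E[W_n]=k_nE[W_n]\to\infty$. What makes a second high-degree vertex rare is that the shared points are confined to the lens, and a union count discards exactly this.

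\emph{The heuristic points to the wrong mechanism.} If $\|X-Y\|\lesssim r_n/k_n$, the crescent $B(X;r_n)\setminus B(Y;r_n)$ is empty with probability bounded below. In that case $\deg Y\ge\deg X$ automatically, so no extra point is needed. A uniform factor $k_n^{-1}$ against the $\asymp k_n$ neighbours of $X$ would in any case only give $b_2=O(1)$.

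\emph{The role of $d\ge2$ is misplaced.} It does not enter through the symmetric difference: $1-\alpha\asymp\|X-Y\|/r_n$ holds in every dimension, including $d=1$.

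\textbf{The missing idea.} This is the paper's proof of Lemma~\ref{Lemma.03}, applied on $B(\bm 0;\rho_1+3r_n)$ in the non-uniform case: condition on one ball.
\begin{enumerate}
\item Given $\CP_n(B_x^n)=k_n-1$, the number of those points in the crescent $B_x^n\setminus B_y^n$ is $\mathrm{Bi}(k_n-1,F)$ with $F\asymp\|x-y\|/r_n$.
\item For $y$ also to have degree $k_n$, this count must be matched exactly by the independent $\mathrm{Po}(O(t_nF))$ count in $B_y^n\setminus B_x^n$.
\item Since $t_n=o(k_n)$, Chernoff bounds give conditional probability $\le 2e^{-ck_n\|x-y\|/r_n}$.
\item Use $n\int F(dx)\,P[\CP_n(B_x^n)=k_n-1]=O((k_n/t_n)E[W_n])$, and note the partner $y$ has intensity $\asymp t_n/r_n^d$. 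Then $b_2\lesssim k_nE[W_n]\int_{B(\bm 0;3)}e^{-ck_n\|z\|}\,dz=O(k_n^{1-d})$. This is exactly where $d\ge2$ is used. Non-adjacent pairs are handled the same way, with $F$ bounded below.
\end{enumerate}
Keep the prefactor integrated over $x$, as your $nE[W_n]r_n^d$ form does. Bounding $P[\CP_n(B_x^n)=k_n-1]$ by its supremum over $B(\bm 0;\rho_1)$ costs a factor $k_n^{d/(2m)}$, which is fatal, e.g., for $d=2$, $m=1$.

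\textbf{Smaller slips.}
\begin{itemize}
\item Your $b_1\lesssim E[W_n]^2\theta(3r_n)^d n\asymp t_n$ need not vanish. The correct bound has no factor $n$ and is $O(k_n^{d/(2m)}r_n^d)$.
\item Your first-moment constant must be $\gamma f_{\max}^{1+d/(2m)}$; you dropped the density $f(y)\approx f_{\max}$ of the vertex itself. Otherwise the limit comes out as $e^{-x}/f_{\max}$.
\item The inversion needs $f_{\max}n\theta S_{k_n}(\CX_n)^d<k_n$ with high probability (Lemma~\ref{Lemma.04}), since $te^{-t/k_n}$ is not monotone beyond $k_n$.
\end{itemize}
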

\begin{rem}
    It can be shown by a similar argument to the proof of Theorem~\ref{MainResult.03} that for densities of the form $f(x) = (f_{\max}-\|x\|_2^s) \mathbf{1}_{[-1/2,1/2]^d}(x)$, where $s > 0$ and $\|\cdot\|_2$ denotes the Euclidean norm on $\BR^d$, it holds that
    \begin{equation*}
        -e f_{\max}n^{1+1/k_n}\theta S_{k_n}(\CX_n)^d \exp\left(-\frac{f_{\max}n\theta S_{k_n}(\CX_n)^d}{k_n}\right) + k_n + \left(\frac{1}{2}+\frac{d}{s}\right)\log k_n + \log \frac{\sqrt{2\pi}}{\gamma f_{\max}^{1+d/s}} \xrightarrow{d} Z,
    \end{equation*}
    where $\gamma := \int_{\BR^d} \exp(-\|z\|_2^s) \,dz$ and $Z$ is a random variable with the Gumbel distribution $P[Z \leq x] = \exp(-\exp(-x))$.
\end{rem}
\begin{cor}[Limit in probability for $S_{k_n}(\CX_n)$]\label{Corollary.01}
    Under the assumptions of Theorem \ref{MainResult.02}, \ref{MainResult.03}, 
    \begin{equation*}
        \frac{n^{1+1/k_n}\theta S_{k_n}(\CX_n)^d}{k_n} \to \frac{1}{f_{\max}e}, \quad \text{in probability}
    \end{equation*}
    holds as $n \to \infty$.
\end{cor}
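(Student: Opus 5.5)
We derive the corollary directly from the distributional limits in Theorems~\ref{MainResult.02} and \ref{MainResult.03}; the uniform case is the special case $f_{\max}=1$, $m$ replaced by the appropriate constant, so it suffices to treat the general form. Set
\begin{equation*}
    U_n := \frac{f_{\max} n\theta S_{k_n}(\CX_n)^d}{k_n}, \qquad a_n := e\,k_n n^{1/k_n},
\end{equation*}
so that the first term in Theorem~\ref{MainResult.03} equals $-a_n U_n e^{-U_n}$. The theorem then says
\begin{equation*}
    a_n U_n e^{-U_n} = k_n + c_d\log k_n + O_P(1),
\end{equation*}
where $c_d$ is a constant and the $O_P(1)$ term comes from the Gumbel limit together with the fixed constant $\log(\sqrt{2\pi}/(\gamma f_{\max}^{1+d/(2m)}))$. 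Dividing by $k_n\to\infty$ gives
\begin{equation*}
    e\,n^{1/k_n}\, U_n e^{-U_n} \to 1 \quad \text{in probability}.
\end{equation*}
Writing $V_n := n^{1/k_n}U_n$, the claim to be proved is exactly $V_n\to 1/e$ in probability.

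Since $k_n=o(\log n)$, we have $n^{1/k_n}\to\infty$. Write $U_n e^{-U_n} = V_n n^{-1/k_n}\exp(-V_n n^{-1/k_n})$. The relation above becomes
\begin{equation*}
    eV_n \exp\bigl(-V_n n^{-1/k_n}\bigr) \to 1 \quad \text{in probability}.
\end{equation*}
The key point is to show that $V_n n^{-1/k_n}=U_n\to 0$ in probability. Once that holds, the exponential factor tends to $1$ and $V_n\to 1/e$ follows.

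The obstacle is that $x\mapsto xe^{-x}$ is not injective. The equation $U e^{-U}\approx 1/(e n^{1/k_n})$ has a small root $U\approx 1/(en^{1/k_n})$ and also a large root $U\approx \tfrac1{k_n}\log n$ (up to lower-order terms). We must therefore exclude the large branch. I would do this with a direct first-moment bound, which uses only ingredients the paper develops anyway. Fix $\varepsilon>0$ and let $r_n$ satisfy $f_{\max}n\theta r_n^d=\varepsilon k_n$. The event $\{U_n>\varepsilon\}$ means the maximum degree of $G(\CX_n;r_n)$ is below $k_n$. Its probability tends to $0$ because of concentration around the mean. In the uniform case, partition into disjoint cells of diameter $\le r_n$; there are about $r_n^{-d}$ of them, each containing $\mathrm{Bin}(n,c\,r_n^d)$ points with mean $c\varepsilon k_n$. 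The probability that every cell has fewer than $k_n$ points is at most
\begin{equation*}
    \bigl(1-P[\mathrm{Po}(c\varepsilon k_n)\ge k_n]\bigr)^{c r_n^{-d}} \le \exp\bigl(-c r_n^{-d}e^{-Ck_n}\bigr),
\end{equation*}
up to negative dependence among the cells, which we handle by Poissonization. Since $r_n^{-d}\asymp n/k_n$ and $e^{-Ck_n}=n^{-o(1)}$, this bound tends to $0$. For the general density, the same argument is run on cells inside $B(\bm 0;\rho_1)$, where $f\ge f_{\max}/2$. Alternatively, monotonicity of $S_{k_n}$ together with the fact that the Gumbel limit has no atom at $-\infty$ rules out the large root, provided the theorem's proof already shows the limiting distribution function is attained on the small branch. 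I would prefer the self-contained cell argument above.

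Finally, on the event $\{U_n\le\varepsilon\}$ we have $\exp(-V_n n^{-1/k_n})\in[e^{-\varepsilon},1]$. Hence $eV_n$ lies within a factor $e^{\varepsilon}$ of a quantity tending to $1$ in probability. Since $\varepsilon$ is arbitrary, $V_n\to 1/e$, which is the statement of Corollary~\ref{Corollary.01}.
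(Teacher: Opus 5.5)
Your proposal is correct, and its skeleton matches the paper's proof. You divide the Gumbel limit by $k_n$, and you use $n\theta S_{k_n}(\CX_n)^d=o_P(k_n)$ both to replace $\exp(-U_n)$ by $1$ and to exclude the large root of $xe^{-x}$. The ``$+o(1)$'' in the first inequality of the paper's proof is exactly this step.

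The only real difference is where that input comes from. The paper takes it from Lemma~\ref{Lemma.04}, which is imported from Penrose's limit theorem for the maximum degree in the subconnective regime. You instead prove directly the one-sided fact you need: when $f_{\max}n\theta r_n^d=\varepsilon k_n$, the maximum degree of $G(\CX_n;r_n)$ is at least $k_n$ with high probability. You do this with a Poissonized cell-occupancy bound, and that bound is sound:
\begin{itemize}
\item A cell of diameter at most $r_n$ containing $k_n+1$ points (you wrote $k_n$; it should be $k_n+1$) forces a vertex of degree at least $k_n$.
\item The event ``every cell has at most $k_n$ points'' is decreasing, so the coupling $\CP_n^-\subset\CX_n$ from the proof of Proposition~\ref{Proposition.02} transfers the Poisson bound to $\CX_n$.
\item The bound vanishes because $c\,(n/k_n)e^{-Ck_n}\to\infty$ when $k_n=o(\log n)$.
\end{itemize}
In the non-uniform case, run the argument on a small ball around $\bm{0}$ on which $f\ge f_{\max}/2$, which exists by continuity. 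The stated hypotheses do not guarantee this bound on all of $B(\bm{0};\rho_1)$.

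As for trade-offs, the citation route is shorter and yields much more, namely the first-order asymptotics of $\Delta_n$. Your route is elementary and self-contained, using only tools already present in the paper.
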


As in the case where $k_n$ is fixed, define
\begin{equation*}
    \Phi_{k_n,n} := \sum_{X \in \CX_n} \delta_X \mathbf{1}\{\mathop{\mathrm{deg}_n} (X) = k_n\}.
\end{equation*}
\begin{thm}\label{MainResult.07}
    Under the assumptions of Theorems \ref{MainResult.02} and \ref{MainResult.03}, assume also that $(r_n)_{n \in \BN}$ is chosen so that $E[\sum_{j=k_n}^\infty W_{j,n}'] \to \beta \in (0,\infty)$ and $n\theta r_n^d = o(k_n)$.
    Then, in the uniform case, $\Phi_{k_n,n}$ converges in distribution, with respect to the vague topology, to a Poisson point process $\CQ$ with intensity measure $\beta \text{Leb}(\cdot \cap [-1/2,1/2]^d)$.
    In the non-uniform case, $D_{k_n^{1/(2m)}}\Phi_{k_n,n}$ converges in distribution, with respect to the vague topology, to a Poisson point process $\CQ$ with intensity measure $\Lambda(B) = \beta \gamma^{-1} \int_B \exp(\partial^{2m}f(\bm{0})[z]/((2m)!)) \,dz$.
\end{thm}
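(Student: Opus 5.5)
We plan to prove Theorem~\ref{MainResult.07} through Laplace functionals together with a Poisson approximation, using the dependency-graph version of the Stein--Chen method (or equivalently a Poisson process approximation in total variation in the style of Arratia--Goldstein--Gordon/Penrose). Here $W_{j,n}'$ denotes the Poissonized count of vertices of degree $j$ in $G(\CP_n; r_n)$. We proceed in four steps.

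\textbf{Step 1: Poissonization and reduction to exact degree $k_n$.} We first compare $\Phi_{k_n,n}$ with its Poisson analogue $\Phi'_{k_n,n}$ built on $\CP_n$. The standard coupling $|\CP_n \triangle \CX_n| = O_P(\sqrt n)$ changes the degrees of only $O_P(\sqrt{n}\, n r_n^d)$ points; we then show this perturbation affects $\Phi_{k_n,n}$ with vanishing probability. The argument is that such a point would have to lie near a vertex of degree $k_n$ or $k_n\pm1$, and the expected number of those is $O(1)$ times a factor $\sqrt{n}\,r_n^d \to 0$.

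Next we use $n\theta r_n^d = o(k_n)$. The Poisson tail ratio $P[\mathrm{Po}(\lambda)=j+1]/P[\mathrm{Po}(\lambda)=j] = \lambda/(j+1)\to0$ with $\lambda = nf\theta r_n^d$. Hence $E[\sum_{j>k_n} W'_{j,n}] = o(E[W'_{k_n,n}])$, and so $E[W'_{k_n,n}]\to\beta$.

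\textbf{Step 2: Intensity identification.} We compute
\[
E[\Phi'_{k_n,n}(B)] = n\int_B f(x)\,P\bigl[\mathrm{Po}(n\textstyle\int_{B(x;r_n)} f)=k_n\bigr]\,dx .
\]

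In the uniform case, the integrand is constant away from an $r_n$-neighbourhood of the boundary of the cube. Near the boundary the Poisson mean is reduced by a constant factor $<1$, which kills the term by a factor $c^{k_n}\to0$. This gives $\beta\,\mathrm{Leb}(B\cap[-1/2,1/2]^d)$.

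In the non-uniform case, we expand
\[
f(x)\approx f_{\max}\bigl(1+\partial^{2m}f(\bm 0)[x]/((2m)!\,f_{\max})\bigr)
\]
and use
\[
P[\mathrm{Po}(\lambda(1-\eta))=k]/P[\mathrm{Po}(\lambda)=k]\approx (1-\eta)^{k}e^{\lambda\eta}\approx e^{-k\eta}
\]
(again using $\lambda=o(k)$). This shows that after dilation by $k_n^{1/(2m)}$ the density of points is proportional to $\exp(\partial^{2m}f(\bm0)[z]/(2m)!)$, up to a $f_{\max}$ factor absorbed into the normalization. The second assumed condition on $f$ makes the contribution outside $B(\bm0;\rho_1)$ exponentially small in $k_n$, and normalizing by $\gamma$ gives $\Lambda$.

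\textbf{Step 3: Poisson process approximation.} Partition a compact set into small cubes and consider the indicators $\xi_x=\mathbf 1\{\deg(x)=k_n\}$. These are $2r_n$-dependent under Poissonization. By the Arratia--Goldstein--Gordon bound, it suffices to show that
\[
b_2 = \int\!\!\int_{\|x-y\|\le 2r_n} n^2 f(x)f(y)\,P[\deg x=\deg y=k_n]\,dx\,dy \to 0 .
\]
This is the \textbf{main obstacle}, and it is precisely where the compound Poisson behaviour of Theorem~\ref{MainResult.05} disappears.

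The plan is to condition on the number $J$ of points in $B(x;r_n)\cap B(y;r_n)$. The remaining counts in the two disjoint lunes are independent Poisson variables, with means at most $\lambda$. The joint probability is then bounded by roughly
\[
P[\deg x=k_n]\cdot \max_{J} \frac{P[\mathrm{Po}(\lambda')=k_n-J]}{P[\mathrm{Po}(\lambda)=k_n]}\cdot P[\deg y=k_n \mid \cdot\,] .
\]
We then use the factorials: the conditional probability of the $y$-lune completing to degree $k_n$ costs at least a factor $\lambda^{s}/s!$ with $s=k_n-J$. The alternative, large overlap $J$, forces $\|x-y\|$ small, which costs volume.

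Summing these contributions, we aim for
\[
b_2 \lesssim E[W']\,\bigl(nr_n^d\bigr)\sup_{J}(\cdots) \to 0 .
\]
Here $d\ge2$ is used in the following way: the overlap volume fraction decays as $\|x-y\|/r_n$ grows, so configurations with $x,y$ both of degree $k_n$ need $k_n(1-c\|x-y\|/r_n)$ shared points. The corresponding entropy and volume factors are then summable, which yields $o(1)$. The term $b_1 \lesssim (E W')^2\, n r_n^d / n\to0$ is trivial.

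\textbf{Step 4: Conclusion.} Steps 1--3 give convergence of Laplace functionals for continuous compactly supported $g$. This yields vague convergence in distribution to $\CQ$ (or to its dilated version in the non-uniform case), and we transfer the result back to $\CX_n$ via Step 1.
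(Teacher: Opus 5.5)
Your architecture matches the paper's. You Poissonize, compute the intensity by the Mecke/Palm formula, bound a discretized dependency-graph Stein--Chen error whose two terms are your $b_1,b_2$ (the paper's $I_{1,C}(n),I_{2,C}(n)$ from Proposition~\ref{Proposition.05}), and de-Poissonize. The paper de-Poissonizes instead by the monotone sandwich $\CP_n^-\subset\CX_n\subset\CP_n^+$ applied to $\sum_{j\ge k_n}$ counts, and concludes through Kallenberg's one-dimensional criterion for simple limits rather than Laplace functionals; both substitutions are harmless. The gap is Step~3, the only non-routine estimate in the theorem: you do not carry it out, and the bound you aim for cannot go to zero.

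Write $B_x:=B(x;r_n)$ and $\lambda:=n\theta r_n^d$. For $\|x-y\|\le r_n$, both points having degree $k_n$ means $\CP_n(B_x)=\CP_n(B_y)=k_n-1$. Since $P[\mathrm{Po}(\lambda)=k_n-1]=(k_n/\lambda)\,P[\mathrm{Po}(\lambda)=k_n]$ and $\lambda=o(k_n)$, this factor $k_n/\lambda$ tends to infinity. For $\|x-y\|\lesssim r_n/k_n$ the second constraint costs only a constant factor, so there the (Palm) ratio of $P[\deg x=\deg y=k_n]$ to $P[\deg x=k_n]$ is of order $k_n/\lambda$. Hence any bound of the form $E[W']\,(nr_n^d)\sup(\cdots)$, with a supremum over $J$ or over positions rather than an integral over the separation, is of order $\beta k_n\to\infty$. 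A maximum over $J$ of $P[\mathrm{Po}(\lambda')=k_n-J]/P[\mathrm{Po}(\lambda)=k_n]$ is even worse, at least $n^{1-o(1)}$ (take $J=k_n$). The same factor also shows that the number of degree-$(k_n-1)$ vertices in your Step~1 is not $O(1)$, although there the product with $\sqrt n\,r_n^d$ is still $O(k_n/\sqrt n)\to0$.

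The missing idea is decay in the separation, as in the proof of Lemma~\ref{Lemma.03}. Conditionally on $\CP_n(B_y)=k_n-1$, the count in $B_x$ is $k_n-1-\mathrm{Bi}(k_n-1,F_{y\setminus x})+\mathrm{Po}(\lambda F_{x\setminus y})$, where $F_{y\setminus x}$ is the fraction of the mass of $B_y$ lying outside $B_x$. Since $\lambda=o(k_n)$, the Poisson gain essentially never compensates the binomial loss of order $k_nF_{y\setminus x}$. The tail bounds of the Appendix then give a conditional probability at most $\exp(-ck_nF_{y\setminus x})\le\exp(-c'k_n\|x-y\|/r_n)$, using the linear lower bound on the lune fraction.

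Integrating over $y$ yields $\lambda k_n^{-d}$ instead of $\lambda$, so $b_2\lesssim\beta\,(k_n/\lambda)\,\lambda\,k_n^{-d}=\beta k_n^{1-d}$. For $r_n<\|x-y\|\le 2r_n$ the lune fraction is bounded below, and you get $e^{-ck_n}$ without the factor $k_n/\lambda$. This is exactly where $d\ge2$ enters: the volume factor $k_n^{-d}$ must beat the clustering factor $k_n$. The region $\|x-y\|\le r_n/k_n$ alone contributes $\asymp k_n^{1-d}$, so the estimate is sharp, and the mechanism is not a summability of entropy factors.

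Finally, in Step~2 the $f_{\max}$ cannot be absorbed into the normalization. After dilation by $k_n^{1/(2m)}$ the limiting density is proportional to $\exp(\partial^{2m}f(\bm0)[z]/((2m)!\,f_{\max}))$. A constant inside the exponent of a $2m$-homogeneous form rescales $z$ by $f_{\max}^{1/(2m)}$ rather than multiplying the density by a constant, so the result matches the stated $\Lambda$ only when $f_{\max}=1$. The paper's displayed computation has the same slip.
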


In this regime of $(r_n)_{n \in \BN}$, the number of vertices with degree exceeding $k_n$ converges to zero in probability.
Consequently, when there is a vertex with degree $k_n$, Theorem~\ref{MainResult.07} captures the behavior of the vertices achieving the maximum degree; however, since the limit is Poisson, the number of vertices with degree exactly $k_n$ is zero with positive probability.
On the event that no vertex has degree $k_n$, it is necessary to examine the behavior of $\Phi_{k_n-1,n}$ to understand the vertices achieving the maximum degree.
Müller's result~\cite{Muller.2008} on the two-point concentration of $\Delta_n$ implies that it is sufficient to examine only $\Phi_{k_n,n}$ and $\Phi_{k_n-1,n}$.
\begin{thm}\label{MainResult.08}
    Under the assumptions of Theorem \ref{MainResult.02} and \ref{MainResult.03}, assume also that $(r_n)_{n \in \BN}$ is chosen so that $E[\sum_{j=k_n}^\infty W_{j,n}'] \to \beta \in (0,\infty)$ and $n\theta r_n^d = o(k_n)$.
    Then, $D_{(k_n^{1+d/(2m)}/(n\theta r_n^d))^{1/d}} \Phi_{k_n-1,n}$ converges in distribution, with respect to the vague topology, to a homogeneous Poisson point process $\CH$ of intensity $\beta/f_{\max}$ (in the uniform case, we regard $m = \infty$ and $d/(2m) = 0$).
\end{thm}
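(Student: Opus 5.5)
The plan is to prove Poisson convergence of the rescaled process through a Poisson approximation, using Poissonization and a local-dependence bound. I would first work with the Poisson process $\CP_n$ in place of $\CX_n$ (the primed quantities $W'_{j,n}$ refer to this Poissonized setting) and de-Poissonize at the end by the standard coupling $|N_n-n|=O_P(\sqrt n)$. Since $n\theta r_n^d=o(k_n)$ and degrees near $k_n$ are rare, the added or removed points change $\Phi_{k_n-1,n}$ on compact windows only with vanishing probability. Write $a_n:=(k_n^{1+d/(2m)}/(n\theta r_n^d))^{1/d}$. By the standard criterion, for simple limit processes it suffices to show two things for every finite union $B$ of bounded rectangles:
\begin{equation*}
E\bigl[D_{a_n}\Phi'_{k_n-1,n}(B)\bigr]\to (\beta/f_{\max})\,\mathrm{Leb}(B),
\qquad
P\bigl[D_{a_n}\Phi'_{k_n-1,n}(B)=0\bigr]\to e^{-(\beta/f_{\max})\mathrm{Leb}(B)}.
\end{equation*}
The avoidance probability in the second statement would come from a total-variation bound for Poisson approximation of $\Phi'_{k_n-1,n}(a_n^{-1}B)$. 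For example, I would use the Penrose-type bound for sums of locally dependent indicators over Poisson processes, where the dependency neighborhood has radius $2r_n$.

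The first moment is handled by the Mecke formula:
\begin{equation*}
E\bigl[\Phi'_{k_n-1,n}(a_n^{-1}B)\bigr]=n\int_{a_n^{-1}B} f(x)\,\pi_{k_n-1}\bigl(n\textstyle\int_{B(x;r_n)}f\bigr)\,dx,
\qquad \pi_j(\lambda)=e^{-\lambda}\lambda^j/j!.
\end{equation*}
The set $a_n^{-1}B$ shrinks to $\bm 0$, so $n\int_{B(x;r_n)}f=f_{\max}n\theta r_n^d(1+o(1/k_n))$ uniformly there, and the error is negligible after raising to the power $k_n$. The key observation is $\pi_{k_n-1}(\lambda)=\pi_{k_n}(\lambda)\,k_n/\lambda$. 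Moreover, because $\lambda=o(k_n)$, the tail $\sum_{j\ge k_n}\pi_j$ is dominated by $\pi_{k_n}$, so $E[\sum_{j\ge k_n}W'_{j,n}]\sim E[W'_{k_n,n}]$. In the non-uniform case, the Laplace-type computation behind Theorems~\ref{MainResult.03} and~\ref{MainResult.07} should give
\begin{equation*}
E[W'_{k_n,n}]\sim n f_{\max}\,\pi_{k_n}(f_{\max}n\theta r_n^d)\,\gamma\,k_n^{-d/(2m)}\to\beta .
\end{equation*}
Here the factor $k_n^{-d/(2m)}$ comes from $(f(x)/f_{\max})^{k_n}\approx\exp(k_n\partial^{2m}f(\bm0)[x]/(2m)!)$. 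Combining these, near the origin the density of degree-$(k_n-1)$ points is
\begin{equation*}
n f_{\max}\,\pi_{k_n}\cdot\frac{k_n}{f_{\max}n\theta r_n^d}\sim\frac{\beta}{f_{\max}}\cdot\frac{k_n^{1+d/(2m)}}{\gamma\,n\theta r_n^d}.
\end{equation*}
Multiplying by $\mathrm{Leb}(a_n^{-1}B)=a_n^{-d}\mathrm{Leb}(B)$ gives the claimed intensity, possibly up to the constant $\gamma$, which I would track carefully against the paper's normalization. In the uniform case I take $m=\infty$, $\gamma=1$, and a fixed origin, and boundary effects do not arise.

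For the Poisson approximation, the error terms are $b_1$, which sums $p_xp_y$ over pairs at distance at most $2r_n$, and $b_2$, which sums $E[\xi_x\xi_y]$ over such pairs. The term $b_1$ is bounded by $E[\cdot]\cdot\sup_x n\int_{B(x;3r_n)}f\,\pi_{k_n-1}$, and this tends to $0$ because $n r_n^d\,\pi_{k_n-1}\to0$. The main obstacle is $b_2$, the probability that two points within distance $2r_n$ both have degree $k_n-1$. I would condition on the pair, split the neighbors into the shared lens $B(x;r_n)\cap B(y;r_n)$ and the two private regions, and bound the joint probability by
\begin{equation*}
\sum_{i}\pi_i(\lambda_{\cap})\,\pi_{k_n-1-i}(\lambda_{x\setminus y})\,\pi_{k_n-1-i}(\lambda_{y\setminus x}).
\end{equation*}
Because $\lambda=o(k_n)$, each configuration is penalized by a factor of roughly $(\lambda/k_n)^{c\,\cdot\text{(private count)}}$ relative to $\pi_{k_n}$. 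This should yield $b_2=o(1)$ after integrating over $\|x-y\|\le 2r_n$, as in the analogous step for Theorem~\ref{MainResult.07}, which I would reuse almost verbatim, but it needs the most care when $\|x-y\|$ is small and the lens is nearly the whole ball.
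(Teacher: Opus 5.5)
Your plan follows the paper's route: Poissonize, compute the first moment on the shrinking window with the Mecke formula, obtain the Poisson approximation from the dependency-graph bound (Proposition~\ref{Proposition.05} with the pair estimates of Lemma~\ref{Lemma.03}), and de-Poissonize. That structure is sound. The item you left open, the constant, is not bookkeeping, and you should trust your computation rather than the statement. Put $\lambda_n:=f_{\max}n\theta r_n^d$, $\pi_j(\lambda):=e^{-\lambda}\lambda^j/j!$ and $a_n^{-d}:=n\theta r_n^d/k_n^{1+d/(2m)}$. Since $k_n a_n^{-2m}=(n\theta r_n^d/k_n)^{2m/d}\to0$ and $r_n=o(a_n^{-1})$, you have $nF(B(x;r_n))=\lambda_n(1+o(1/k_n))$ uniformly on $a_n^{-1}C$, as you noted. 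So no Laplace integral arises in the window, and
\[
E\bigl[W'_{k_n-1,n}(a_n^{-1}C)\bigr]\sim n f_{\max}\,\pi_{k_n-1}(\lambda_n)\,a_n^{-d}\,\mathrm{Leb}(C)=n\,\pi_{k_n}(\lambda_n)\,k_n^{-d/(2m)}\,\mathrm{Leb}(C).
\]
The constant $\gamma$ enters only through the normalization of $\beta$. The computation preceding the proof of Corollary~\ref{Corollary.01}, together with your observation that the tail $j>k_n$ is negligible, gives $\beta=\lim \gamma f_{\max}^{1+d/(2m)}\,n\,\pi_{k_n}(\lambda_n)\,k_n^{-d/(2m)}$. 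Hence the limiting intensity is $\beta/(\gamma f_{\max}^{1+d/(2m)})$. Your version differs from this only by a slip in the Laplace step: the correct approximation is $(f(x)/f_{\max})^{k_n}\approx\exp\bigl(k_n\partial^{2m}f(\bm{0})[x]/((2m)!\,f_{\max})\bigr)$. So the constant in your formula for $E[W'_{k_n,n}]$ is $f_{\max}^{d/(2m)}\gamma$, and with that correction your $\beta/(f_{\max}\gamma)$ is exactly this number.

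The paper reaches $\beta/f_{\max}$ only because the third line of the displayed chain in its proof of Theorem~\ref{MainResult.08} multiplies the second line by $\gamma f_{\max}^{d/(2m)}$ without justification. Evaluating the second line directly gives $n\pi_{k_n}(\lambda_n)k_n^{-d/(2m)}\mathrm{Leb}(C)$, as above. So your argument proves the uniform case as stated ($\gamma=f_{\max}=1$, $d/(2m)=0$). In the non-uniform case it proves the theorem with intensity $\beta/(\gamma f_{\max}^{1+d/(2m)})$, and the stated value $\beta/f_{\max}$ is incorrect unless $\gamma f_{\max}^{d/(2m)}=1$.

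Two minor remarks on the other steps. For $b_2$, measure the pair probability against $\pi_{k_n-2}$ rather than $\pi_{k_n}$. For adjacent $x,y$ it is at most $c\,\pi_{k_n-2}(\lambda_n)e^{-ck_n\|x-y\|/r_n}$ by the binomial/Poisson split of Lemma~\ref{Lemma.03}; this integrates to $b_2=O(k_n^{1-d})$, which is where $d\geq 2$ is used. For de-Poissonization, $\Phi_{k_n-1,n}(C)$ is not monotone in the point set, so the sandwich of Proposition~\ref{Proposition.02} does not transfer verbatim. Your perturbation argument is the right replacement: under your coupling the expected number of affected vertices in the window is $O(k_n n^{-1/2})$.
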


\subsection{Heuristics behind the difference in the limiting point configurations}
Theorems~\ref{MainResult.05} and \ref{MainResult.07} show that the point process formed by vertices attaining the maximum degree converges to a compound Poisson process when $k_n$ is fixed, whereas it converges to a Poisson process when $k_n \to \infty$ with $k_n = o(\log n)$.
This difference arises from whether points in the vicinity of a point $X$, conditioned on $X$ attaining the maximum degree, are likely to do so as well (Figure~\ref{Table.01}).

The figure on the left (right) illustrates a point $X$ attaining the maximum degree and its nearest neighbor $Y$ in a random geometric graph $G(\CX_n; r_n)$ with $r_n$ in the regime of Theorem~\ref{MainResult.05} (in the regime of Theorem~\ref{MainResult.07}, respectively).
The circles of radius $r_n$ are centered at $X$ and $Y$.
In both panels, the circles centered at $Y$ are drawn with thicker lines.
Red points represent neighbors unique to $X$, and orange points represent common neighbors of $X$ and $Y$.
\begin{table}[ht]
    \boldmath
    \centering
    \begin{tabular}{cc}
        \includegraphics[width=0.35\linewidth]{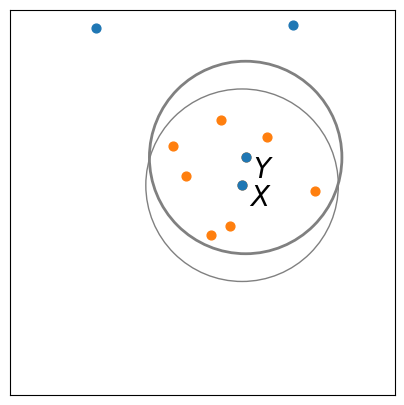} & \includegraphics[width=0.35\linewidth]{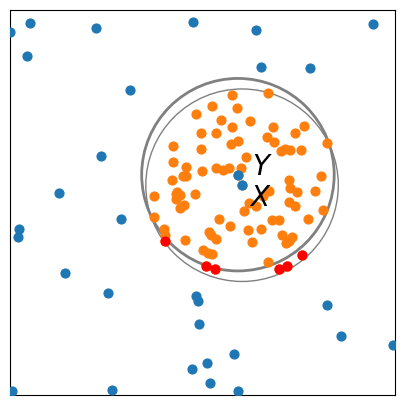}\\
        \large $k_n = k~ (\mathrm{fixed}),$ & \large $k_n \to \infty,~ k_n = o(\log n),$\\
        \large $\mathrm{deg}(X) = k,$ & \large $\mathrm{deg}(X) = k_n,$\\
        \large {\color{red} $\mathrm{deg}(Y) = k$} & \large {\color{red} $\mathrm{deg}(Y) < k_n$}
    \end{tabular}
    \unboldmath
    \captionof{figure}{Illustration of the local structure around a point attaing the maximum degree in a random geometric graph.}\label{Table.01}
\end{table}

In both regimes, the neighborhood of a point achieving the maximum degree is more densely populated than its surroundings.
Since this is a rare event, and remote points in the binomial process behave independently, it is natural to expect that the configuration of such dense clusters is asymptotically a Poisson process.
The limiting point configuration is then determined by how many points with the maximum degree belong to each cluster: if each cluster contains only one such point, the configuration of maximum-degree points follows a Poisson process, whereas it follows a compound Poisson process if multiple such points coexist in each cluster.

For fixed $k_n$, the $r_n$-neighborhood of $X$ contains at most a bounded number of points.
Thus, a point $Y$ near $X$ has the same degree with positive probability; that is, $Y$ also attains the maximum degree with positive probability.
In contrast, when $k_n$ diverges, the $r_n$-neighborhood of $X$ contains a diverging number of points, and $Y$ misses a certain fraction of them.
Since the region outside $X$’s neighborhood is relatively sparse, $Y$ cannot compensate for this substantial loss of points and thus fails to achieve the maximum degree.
This is the geometric mechanism underlying the difference in the limiting behavior in Theorems~\ref{MainResult.05} and \ref{MainResult.07}.

\section{Proof of Theorems \ref{MainResult.01}--\ref{MainResult.06}}\label{section.04}
\subsection{Threshold radius}
Given a sequence of radii $(r_n)_{n \in \BN}$ and an integer $j \geq 0$, let $W_{j,n}$ and $W_{j,n}'$ denote the number of vertices of degree $j$ in $G(\CX_n; r_n)$ and $G(\CP_n; r_n)$, respectively. 
Note that
\begin{equation}
    S_k(\CX_n) > r_n \iff \sum_{j = k}^{n-1} W_{j,n} = 0. \label{equivalence.01}
\end{equation}
Because of the above equivalence, the problem of estimating the fluctuation of $S_k(\CX_n)$ reduces to analyzing the fluctuation of $\sum_{j=k}^{n-1} W_{j,n}$.
We consider a $(r_n)_{n \in \BN}$ satisfying that $E[\sum_{j=k}^{n-1} W_{j,n}]$ converges to some finite constant and this is achieved by taking $n^{k+1}r_n^{dk} = \beta \in (0,\infty)$.
Indeed,
\begin{align*}
    E[W_{k,n}] &= nP[\CX_{n-1}(B(X_n; r_n)) = k]\\
    &= n \int_{\BR^d} \left(\binom{n-1}{k} F(B(x; r_n))^k (1-F(B(x; r_n)))^{n-1-k}\right)f(x)\,dx\\
    &\sim n^{k+1}r_n^{dk} \frac{\theta^k}{k!} \int_{\BR^d} f(x)^{k+1}\,dx\\
    &= \beta \frac{\theta^k}{k!} \int_{\BR^d} f(x)^{k+1}\,dx\\
    &\in (0,\infty)
\end{align*}
and $E[\sum_{j=k+1}^{n-1} W_{j,n}] = O(n^{-1/k})$.

In the proof, we use the subgraph count.
For a feasible graph $\Gamma$, let $G_n(\Gamma)$ denote the induced $\Gamma$-subgraph count, that is, the number of counting measures $\CY \in \mathbb{N}(\BR^d)$ for which $\CY \subset \CX_n$ and $G(\CY; r_n)$ is isomorphic to $\Gamma$.
The next lemma is the Poisson limit theorem for $G_n(\Gamma)$:
\begin{lem}[{Penrose~\cite[Corollary~3.6]{Penrose.2003}}]\label{Lemma.01}
    Let $j \geq 2$ and $\{\Gamma_i\}_{i=1}^m$ be a set of non-isomorphic feasible connected graphs with $j$ vertices.
    Suppose that $(r_n)_{n \in \BN}$ satisfies $n^j r_n^{d(j-1)} \to \beta$ for some $\beta \in (0,\infty)$.
    Let $\{Z_i\}_{i=1}^m$ be a set of independent Poisson random veriables with $E[Z_i] = \beta \mu_{\Gamma_i}$.
    Then,
    \begin{equation*}
        (G_n(\Gamma_1), \cdots, G_n(\Gamma_m)) \xrightarrow{d} (Z_1, \cdots, Z_m)
    \end{equation*}
    holds as $n \to \infty$.
\end{lem}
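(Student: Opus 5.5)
The statement immediately above is Lemma~\ref{Lemma.01}, which the paper quotes from Penrose; the plan below follows the standard Poisson approximation route for induced subgraph counts. First reduce the joint convergence to a single Poisson limit using the Cramér–Wold idea for integer vectors. Concretely, I would show that
\begin{equation*}
    \sum_{i=1}^m t_i G_n(\Gamma_i)
\end{equation*}
has the right limit for nonnegative integer weights $t_i$. A cleaner route is to mark each copy by its isomorphism type and prove convergence of the marked count process to a Poisson process on the mark space $\{1,\dots,m\}$ with intensities $\beta\mu_{\Gamma_i}$. Independence of the $Z_i$ then follows automatically from the Poisson process property.

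\textbf{Step 1 (first moment).} Write $G_n(\Gamma_i)=\sum_{\CY\subset\CX_n,\,|\CY|=j} h_{\Gamma_i}(D_{r_n^{-1}}\CY)$. By exchangeability,
\begin{equation*}
    E[G_n(\Gamma_i)]=\binom{n}{j}E[h_{\Gamma_i}(\cdots)].
\end{equation*}
Change variables $x_\ell = x + r_n y_\ell$ and use Lebesgue differentiation together with dominated convergence. Domination holds because $f$ is bounded and $\Gamma_i$ is connected, so $h_{\Gamma_i}$ vanishes unless all $|y_\ell|\le j$. This gives $E[G_n(\Gamma_i)]\sim n^j r_n^{d(j-1)}\mu_{\Gamma_i}\to\beta\mu_{\Gamma_i}$.

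\textbf{Step 2 (Poisson approximation).} Poissonize first: replace $\CX_n$ by $\CP_n$, and handle de-Poissonization at the end by coupling, since $|N_n-n|=O_P(\sqrt n)$ changes the count by $o_P(1)$ when $n r_n^d\to0$. In the Poisson setting, apply a dependency-graph bound for multivariate Poisson approximation (Arratia–Goldstein–Gordon / Stein–Chen). Let $\xi_{\CY}$ denote the indicator that $\CY$ forms a copy of $\Gamma_i$; two such indicators are dependent only if their vertex sets share a point. The error is bounded by $b_1+b_2$, where $b_1$ sums $E\xi_{\CY}E\xi_{\CY'}$ over overlapping pairs and $b_2$ sums $E[\xi_{\CY}\xi_{\CY'}]$ over overlapping pairs. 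For $b_2$, if $\CY\cup\CY'$ has $j+s$ points with $1\le s\le j-1$, it forms a connected geometric configuration. Its expected count is then $O(n^{j+s}r_n^{d(j+s-1)})=O(\beta\,(nr_n^d)^s)$. Since $n^jr_n^{d(j-1)}\to\beta$ forces $nr_n^d\to0$, we get $b_2\to0$; $b_1$ is handled the same way. To use the dependency-graph lemma, I would actually discretize, or equivalently use the Palm/Mecke version of Stein's method for Poisson processes.

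\textbf{Main obstacle.} The delicate step is the overlapping-pair estimate, together with properly accounting for the induced-subgraph indicator not being monotone. I would handle it by bounding each indicator above by the indicator that $G(\CY;r_n)$ is connected, and then using the Mecke formula to reduce to integrals of connected configurations on $j+s$ points. The key observation is that the overlap condition forces all $j+s$ points into a ball of radius $O(r_n)$.
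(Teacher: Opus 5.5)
Your argument is correct, but it takes a longer route than the proof behind this lemma. The paper only cites Penrose's Corollary~3.6. That proof, which the paper essentially reproduces in Proposition~\ref{Proposition.03}, applies the multivariate dependency-graph bound of Lemma~\ref{Lemma.05} directly to the binomial process. The index set is $\{(\bm{\ell},i)\}$, with $\bm{\ell}$ ranging over the $j$-subsets of $\{1,\dots,n\}$, and the indicators are $\xi_{\bm{\ell},i}=h_{\Gamma_i}(D_{r_n^{-1}}\sum_{\ell\in\bm{\ell}}\delta_{X_\ell})$.

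Since $G_n(\Gamma_i)$ counts \emph{induced} subgraphs, $\xi_{\bm{\ell},i}$ is a function of $(X_\ell)_{\ell\in\bm{\ell}}$ alone. Two sub-families of indices whose $\bm{\ell}$'s never meet are therefore exactly independent by the i.i.d.\ structure, so there is nothing to discretize and no reason to Poissonize. The two error terms are exactly your $b_1=O(n^{2j-1}r_n^{2d(j-1)})$ and $b_2=O(\sum_{s=1}^{j-1}n^{j+s}r_n^{d(j+s-1)})$. Pairs $(\bm{\ell},i)$, $(\bm{\ell},i')$ with $i\neq i'$ contribute nothing because the $\Gamma_i$ are non-isomorphic. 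The multivariate form of the lemma then yields independent Poisson limits directly. So you need neither the Cram\'er--Wold reduction (whose limits $\sum_i t_iZ_i$ would be compound rather than plain Poisson anyway) nor the marked-process formulation.

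Your route is sound: Poissonize, then discretize into small cubes or invoke a Mecke-formula Stein bound, then de-Poissonize by coupling $\CX_n$ with $\CP_n$. That coupling's discrepancy has expectation at most $c\,r_n^{d(j-1)}E[|N_n-n|\max(N_n,n)^{j-1}]=O(n^{-1/2})$. But the two extra approximation layers buy nothing here. Poissonization pays off only when each indicator depends on the whole configuration, as for the degree counts in Lemma~\ref{Lemma.02} and Proposition~\ref{Proposition.05}, where the index-set dependency graph for the binomial process breaks down.
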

\begin{proof}[Proof of Theorem 2.1]
    Let $x < 0$ and $\beta = (-x)^{dk}$, and take $(r_n)_{n \in \mathbb{N}}$ such that $n^{k+1}r_n^{dk} = \beta$.
    Let $\{\Gamma_i\}_{i=1}^m$ be the set of all non-isomorphic feasible graphs with $k+1$ vertices that have at least one vertex of degree $k$, and $q_i > 0$ denote the number of vertices of degree $k$ in $\Gamma_i$.
    Then, by Lemma~\ref{Lemma.01} we have
    \begin{equation*}
        W_{k,n} = \sum_{i=1}^m q_i G_n(\Gamma_i) \xrightarrow{d} \sum_{i=1}^m q_i Z_i,
    \end{equation*}
    where $\{Z_i\}_{i=1}^m$ be independent Poisson random variables with $E[Z_i] = \beta \mu_{\Gamma_i}$.
    That is, $W_{k,n}$ converges in distribution to the compound Poisson random variables.
    It also holds that $E[\sum_{j=k+1}^{n-1} W_{j,n}]$ converges to $0$ and thus $\sum_{j=k+1}^{n-1} W_{j,n} \to 0$ in probability.
    Therefore, $\sum_{j=k}^{n-1} W_{j,n}$ converges in distribution to $\sum_{i=1}^m q_i Z_i$, from which we obtain $P[\sum_{j=k}^{n-1} W_{j,n} = 0] \to \exp(-\beta \sum_{i=1}^m \mu_{\Gamma_i})$.
    Hence, by the equivalence (\ref{equivalence.01}),
    \begin{equation*}
        \begin{aligned}
            &P[-n^{(k+1)/(dk)}S_k(\CX_n) < x]\\
            &= P[S_k(\CX_n) > r_n]\\
            &\xrightarrow{n \to \infty} \exp(-\mu_{d,k}(-x)^{dk}).
        \end{aligned}
    \end{equation*}
    For $x \geq 0$, $P[-n^{(k+1)/(dk)}S_k(\CX_n) < x] = 1$ since $S_k(\CX_n) > 0$ with probability $1$. 
\end{proof}

\subsection{Point configuration}
For each $\Gamma_i$ as in Theorem~\ref{MainResult.05}, define
\begin{equation*}
    \begin{aligned}
        \widetilde{\Phi}_{n,\Gamma_i} &:= \sum_{X \in \CX_n} \delta_X \mathbf{1}\{\text{$\exists \CY \in \mathbb{N}(\BR^d)$ such that $\CY \subset \CX_n$, $G(\CY; r_n) \cong \Gamma_i$ and $\min D_{k,n}(\CY) = X$}\},
    \end{aligned}
\end{equation*}
where $D_{k,n}(\CY)$ denotes the set of vertices of degree $k$ in $G(\CY; r_n)$ and the minimum is taken with respect to the lexicographic ordering on $\BR^d$.
Then, $q_i \widetilde{\Phi}_{n,\Gamma_i}(\BR^d)$ equals to the number of vertices of degree $k$ in $\Gamma_i$-subgraphs.
When $n^{k+1}r_n^{dk} \to \beta \in (0,\infty)$, with high probability, there is no connected subgraph of order $k+2$ or greater.
Thus, with high probability, $\Phi_{k,n}(\BR^d) = \sum_{i=1}^m q_i \widetilde{\Phi}_{n,\Gamma_i}(\BR^d)$ holds.
Therefore, it suffices to analyze $\widetilde{\Phi}_{n,\Gamma_i}$.
\begin{lem}[{Penrose~\cite[Theorem~2.3]{Penrose.2003}}]\label{Lemma.05}
    Let $(I, \sim)$ be a finite simple graph and, $(\xi_i)_{i \in I}$ be a collection of Bernoulli variables such that $\xi_i$ is independent from $\xi_j$ whenever $i \nsim j$.
    That is, assume $(I, \sim)$ is a dependency graph for $(\xi_i)_{i \in I}$.
    Let $\{I(j)\}_{j=1}^{\ell}$ be a partition of $I$ and for $1 \leq j \leq \ell$, set $W_j := \sum_{i \in I(j)} \xi_i$.
    Let $(Z_i)_{i=1}^{\ell}$ be a set of independent Poisson variables with $E[Z_j] = E[W_j]$.
    Then, for $p_i := E[\xi_i]$ and $p_{ij} := E[\xi_i \xi_j]$,
    \begin{equation*}
        d_{\text{TV}}\left((W_j)_{1 \leq j \leq \ell}, (Z_j)_{1 \leq j \leq \ell}\right) \leq 3\left(\sum_{i \in I} \sum_{j \in \CN_i \setminus \{i\}}p_{ij} + \sum_{i \in I} \sum_{j \in \CN_i} p_i p_j\right),
    \end{equation*}
    where $\CN_i := \{i\} \cup \{j \in I : j \sim i\}$ is the neighborhood of $i$ with respect to the adjacency relation $\sim$.
\end{lem}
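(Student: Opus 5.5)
The statement just above is the multivariate Poisson approximation of Arratia, Goldstein and Gordon, quoted from Penrose. I would prove it with the Stein--Chen method for Poisson \emph{processes} on the finite index set $I$, and then project onto the blocks $I(j)$. The first reduction is the following. Let $(Z'_i)_{i\in I}$ be independent Poisson variables with $E[Z'_i]=p_i$. The map $x\mapsto(\sum_{i\in I(j)}x_i)_{1\le j\le \ell}$ is measurable, and total variation distance cannot increase under a measurable map. The image of $(Z'_i)$ is exactly $(Z_j)_{j}$, independent Poissons with $E[Z_j]=E[W_j]$. Hence it suffices to bound $d_{\text{TV}}((\xi_i)_{i\in I},(Z'_i)_{i\in I})$ by the right-hand side.

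For this I use the generator of the immigration--death process on $\BZ_{\ge0}^I$, in which each coordinate $i$ has immigration rate $p_i$ and unit per-capita death rate:
\begin{equation*}
\mathcal{A}h(x)=\sum_{i\in I}p_i\bigl(h(x+e_i)-h(x)\bigr)+\sum_{i\in I}x_i\bigl(h(x-e_i)-h(x)\bigr).
\end{equation*}
Its stationary law is the law of $(Z'_i)$. For each set $A$, the Stein equation $\mathcal{A}h=\mathbf 1_A-P[(Z'_i)\in A]$ has the solution $h_A(x)=-\int_0^\infty\bigl(P[Y_x(t)\in A]-P[(Z'_i)\in A]\bigr)\,dt$, where $Y_x$ is the process started from $x$.

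Write $\Delta_i h(x)=h(x+e_i)-h(x)$. The key analytic input is a uniform bound on the second differences $\Delta_j\Delta_i h_A$, with a constant not exceeding $3/2$ (one actually gets $1$). I would obtain it by coupling. Run processes from $x$, $x+e_i$, $x+e_j$ and $x+e_i+e_j$ together, sharing all immigrations and the deaths of common particles. Each extra particle then has an $\mathrm{Exp}(1)$ lifetime. The second difference is an integral over $t$ of a difference of probabilities that is nonzero only while both extra particles are alive. That event has probability $e^{-2t}$, so the integral is at most $\int_0^\infty e^{-2t}\,dt\le 1$. I expect this bound to be the main obstacle. The aim is to get it without any ``magic factor'' depending on $\sum_i p_i$; in the process setting this is achievable because we only need the crude bound, not the improved $\lambda^{-1}$ one.

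With the bound in hand, the expansion is routine. For $\xi=(\xi_i)_{i\in I}$ and each $i$, set $V_i:=\sum_{j\notin\CN_i}\xi_je_j$. By the dependency-graph assumption $V_i$ is independent of $\xi_i$. Since $\xi_i\in\{0,1\}$,
\begin{equation*}
E[\mathcal{A}h(\xi)]=\sum_i\Bigl(p_iE[\Delta_ih(\xi)]-E[\xi_i\,\Delta_ih(\xi-e_i)]\Bigr).
\end{equation*}
Now replace $\xi$ by $V_i$ in the first term and $\xi-e_i$ by $V_i$ in the second. In the first term the discrepancy is $\sum_{j\in\CN_i}\xi_je_j$, so the error is at most $\|\Delta^2h\|\,p_i\sum_{j\in\CN_i}p_j$. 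In the second term the discrepancy is $\sum_{j\in\CN_i\setminus\{i\}}\xi_je_j$, and on the event $\{\xi_i=1\}$ its expected size is $\sum_{j\in\CN_i\setminus\{i\}}p_{ij}$, giving an error at most $\|\Delta^2h\|\sum_{j\in\CN_i\setminus\{i\}}p_{ij}$. The main terms $p_iE[\Delta_ih(V_i)]-E[\xi_i]E[\Delta_ih(V_i)]$ then cancel by independence. Summing over $i$ and taking the supremum over $A$ gives the stated bound with a constant at most $3$.
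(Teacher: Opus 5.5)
Your proof is correct. The paper gives no proof of this lemma: it is quoted from Penrose, and the multivariate bound there rests on the process version of the Stein--Chen method of Arratia, Goldstein and Gordon, built on Barbour's immigration--death generator. What you have written is therefore a self-contained reconstruction of the standard proof behind the citation. It also correctly observes that total variation contracts under the block-sum map $x\mapsto(\sum_{i\in I(j)}x_i)_j$, which sends independent $\mathrm{Po}(p_i)$ coordinates to independent Poissons with means $E[W_j]$. Your route also shows the constant $3$ is generous. Since $\|\Delta_j\Delta_i h_A\|\le 1$ and your expansion carries no extra factor, you actually obtain $d_{\text{TV}}$ bounded by the bracket itself.

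Two points need tidying.

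First, the coupling step. On the event that both extra particles are alive, the integrand is $P[Y+e_i+e_j\in A]-P[Y+e_i\in A]-P[Y+e_j\in A]+P[Y\in A]$ with $Y=Y_x(t)$. This can have absolute value $2$, since for small $t$ the state $Y_x(t)$ is essentially the point $x$. So the correct estimate is $2\int_0^\infty e^{-2t}\,dt=1$, not $\int_0^\infty e^{-2t}\,dt$. You reach the right number only because you round $1/2$ up to $1$.

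Second, and more importantly, the step ``$V_i$ is independent of $\xi_i$'' needs the setwise notion of dependency graph. You need $\xi_i$ independent of the whole family $(\xi_j)_{j\notin\CN_i}$; more generally, families indexed by sets with no edges between them must be independent. The first sentence of the statement literally asks only for pairwise independence, and with that alone the lemma is false. Take $I=\mathbb{F}_q$ with no edges, $(a,b)$ uniform on $\mathbb{F}_q^2$, and $\xi_x=\mathbf{1}\{ax+b=0\}$. These are pairwise independent with $p_x=1/q$, so the right-hand side is $3/q$. Yet $W=\sum_x\xi_x=1$ with probability $1-1/q$, so $d_{\text{TV}}(W,\mathrm{Po}(1))\to 1-e^{-1}$.

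You should say explicitly that you use Penrose's setwise definition. That is what the clause ``that is, $(I,\sim)$ is a dependency graph'' is meant to invoke. It is also what the paper's applications actually provide, since there the variables are functions of disjoint sets of i.i.d. points, or of a Poisson process on disjoint regions.
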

Let $\mathcal{C}$ be the set of all bounded measurable sets in $\BR^d$.
For a point process $\Psi$ on $\BR^d$, we define $\mathcal{C}_{\Psi} := \{C \in \mathcal{C} : \Psi(\partial C) = 0 ~\text{a.s.}\}$.
By Kallenberg's result~\cite[Theorem~16.16]{Kallenberg.2002}, the following equivalence holds:
\begin{equation*}
    \begin{aligned}
        \Psi_n \xrightarrow{d} \Psi &\iff \text{$(\Psi_n(C_1), \cdots, \Psi_n(C_s)) \xrightarrow{d} (\Psi(C_1), \cdots, \Psi(C_s))$ for all $\{C_j\}_{j=1}^s \subset \mathcal{C}_{\Psi}$},~ s \in \BN_{>0}\\
        &\iff \text{$\int_{\BR^d} g \,d\Psi_n \xrightarrow{d} \int_{\BR^d} g \,d\Psi$ for all $g \in C_K^+(\BR^d)$},
    \end{aligned}
\end{equation*}
where $C_K^+(\BR^d)$ denotes the space of all non-negative continuous functions with compact support.
Note that if we let $\Omega := \mathop{\mathrm{supp}} f = \mathop{\mathrm{supp}} \Lambda$, then $\mathcal{C}_{\widetilde{\CQ}} = \{C \in \mathcal{C} : \Lambda(\Omega \cap \partial C) = 0\}$.
\begin{prop}\label{Proposition.03}
    Assume that $n^{k+1}r_n^{dk} \to \beta$ for some $\beta \in (0,\infty)$.
    Let $\{\Gamma_i\}_{i=1}^m$ be as in Theorem~\ref{MainResult.05}.
    Assume that $\{C_j\}_{j=1}^s \subset \mathcal{C}_{\widetilde{\CQ}}$ is mutually disjoint.
    Then, the joint distribution of $(\widetilde{\Phi}_{n,\Gamma_i}(C_j))_{i,j}$ converges weakly to those of $(Z_{i,j})_{i,j}$ which is a collection of independent Poisson random variables with $E[Z_{i,j}] = \beta \mu_{\Gamma_i, C_j}$.
\end{prop}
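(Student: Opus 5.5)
We plan to prove Proposition~\ref{Proposition.03} by applying the multivariate Poisson approximation of Lemma~\ref{Lemma.05} to a suitable collection of Bernoulli variables, and then to compute the limiting means. The quantities $\widetilde{\Phi}_{n,\Gamma_i}(C_j)$ are not themselves sums of indicators over a fixed index set with a clean dependency graph. We therefore first introduce, for each $(k+1)$-subset $\sigma=\{i_1<\dots<i_{k+1}\}\subset\{1,\dots,n\}$, the indicator
\[
\xi_{\sigma,i,j}:=\mathbf{1}\{G(\{X_\ell\}_{\ell\in\sigma}; r_n)\cong\Gamma_i,\ \min D_{k,n}(\{X_\ell\}_{\ell\in\sigma})\in C_j\}.
\]
Set $V_{i,j}:=\sum_{\sigma}\xi_{\sigma,i,j}$. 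Since the $C_j$ are disjoint and each $\sigma$ realizes at most one isomorphism type, the pairs $(\sigma,(i,j))$ carry at most one nonzero indicator for each $\sigma$. The index set is $I=\{(\sigma,i,j)\}$, partitioned by $(i,j)$. A dependency graph is obtained by declaring $(\sigma,i,j)\sim(\sigma',i',j')$ iff $\sigma\cap\sigma'\neq\emptyset$, because the $X_\ell$ are i.i.d.

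The first step is to relate $V_{i,j}$ to $\widetilde{\Phi}_{n,\Gamma_i}(C_j)$. They can differ only when some point is the minimal degree-$k$ vertex of two distinct $\Gamma$-subgraphs, or when a $(k+1)$-set spans a $\Gamma_i$ but is not a full component. Both situations force a connected configuration of at least $k+2$ points. The expected number of these is $O(n^{k+2}r_n^{d(k+1)})=O(n\,r_n^d)\cdot O(1)\to 0$, since $n^{k+1}r_n^{dk}\to\beta$ implies $nr_n^d\to0$. Hence $P[(V_{i,j})\neq(\widetilde{\Phi}_{n,\Gamma_i}(C_j))]\to 0$, and it suffices to treat $(V_{i,j})$.

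The second step is the means. We compute
\[
E[V_{i,j}]=\binom{n}{k+1}\int \mathbf{1}\{G(x_0,\dots,x_k;r_n)\cong\Gamma_i,\ \min D_k\in C_j\}\prod f(x_\ell)\,dx_\ell .
\]
We change variables $x_\ell=x+r_n y_\ell$, with $x$ the minimal degree-$k$ vertex. This yields $n^{k+1}r_n^{dk}/(k+1)!$ times an integral over $x\in C_j$ and $y\in(\mathbb{R}^d)^k$ of the indicator times $f(x)\prod f(x+r_ny_\ell)$. By Lebesgue's differentiation theorem (a.e.\ continuity of the translates in $L^1$) and dominated convergence, using the boundedness of $f$ and of the support of $y$, this tends to $f(x)^{k+1}$ integrated. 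The combinatorial factor from labelling (the choice of which vertex is the minimal one, and the normalization $1/j!$ with $j=k+1$) must be matched to the definition of $\mu_{\Gamma_i,C_j}$. The lexicographic-minimum restriction and the symmetrization give exactly $\mu_{\Gamma_i,C_j}$, so $E[V_{i,j}]\to\beta\mu_{\Gamma_i,C_j}$. Only $\Lambda(\partial C_j\cap\Omega)=0$ and boundedness of $C_j$ are needed; in fact for fixed sets the convergence holds without boundary conditions, since $x$ itself lies in $C_j$ exactly.

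The third step bounds the error terms in Lemma~\ref{Lemma.05}. The term $\sum_i\sum_{j\in\mathcal{N}_i}p_ip_j$ is at most $\sum_{|\sigma\cap\sigma'|\ge1}p_\sigma p_{\sigma'}\le c\,n^{2(k+1)-1}(r_n^{dk})^2=c\,(n^{k+1}r_n^{dk})^2/n\to0$. For $p_{\sigma\sigma'}$ with $1\le|\sigma\cap\sigma'|=t\le k$ and $\sigma\ne\sigma'$, both sets must be connected at scale $r_n$, so their union is a connected set of $2k+2-t$ points. This gives $p_{\sigma\sigma'}\le c\,r_n^{d(2k+1-t)}$, and summing over at most $n^{2k+2-t}$ pairs gives $c\,n^{2k+2-t}r_n^{d(2k+1-t)}=c\,(n^{k+1}r_n^{dk})\,(nr_n^d)^{k+1-t}\to0$. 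The case $\sigma=\sigma'$ with different $(i,j)$ contributes zero by disjointness. This gives total variation convergence to independent Poissons with means $E[V_{i,j}]$, and together with step two and the first step the proposition follows. The main obstacle is step one, namely carefully controlling the discrepancy between the ``$\exists\,\mathcal{Y}$'' definition and the subset count via $(k+2)$-point connected configurations, and matching the combinatorial constant in $\mu_{\Gamma_i,C_j}$.
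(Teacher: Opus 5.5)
Your proposal is correct and follows the paper's proof: the same indicators over $(k+1)$-subsets, the same overlap dependency graph fed into Lemma~\ref{Lemma.05}, and the same bounds $c\,n^{2k+1}r_n^{2dk}$ and $c\sum_{t=1}^{k} n^{k+1}r_n^{dk}(nr_n^d)^{k+1-t}$ for the two error terms. Where you deviate, you are more careful than the paper in two places: your step one justifies identifying $\widetilde{\Phi}_{n,\Gamma_i}(C_j)$ with the subset count, which the paper asserts as an exact equality although it only holds with high probability; and your change of variables centred at the marked vertex (using $L^1$-continuity of translations, with translation invariance producing the factor $k+1$) gives $E[V_{i,j}]\to\beta\mu_{\Gamma_i,C_j}$ for every bounded Borel $C_j$, so the paper's open-set sandwich and the boundary hypothesis $C_j\in\mathcal{C}_{\widetilde{\mathcal{Q}}}$ are not needed here.
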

\begin{proof}
    Let $I_n$ be a set of indices $(\bm{\ell},i,j)$ where $\bm{\ell}$ runs over all subsets of $\{1, 2, \cdots, n\}$ of size $k+1$.
    Define the adjacency relation $\sim$ on $I_n$ by $(\bm{\ell},i,j) \sim (\bm{\ell}',i',j') \iff \bm{\ell} \cap \bm{\ell}' \neq \emptyset \land (\bm{\ell},i,j) \neq (\bm{\ell}',i',j')$.
    Set $h_{n,\Gamma_i,C_j}(\CY) := \mathbf{1}\{G(\CY;r_n) \cong \Gamma_i\} \cap \{\min D_{k,n}(\CY) \in C_j\}$ and $\xi_{\bm{\ell},i,j} := h_{n,\Gamma_i,C_j}(\sum_{\ell \in \bm{\ell}} \delta_{X_{\ell}})$.
    Then $(I_n,\sim)$ is a dependency graph for $(\xi_{\bm{\ell},i,j})$ and $\widetilde{\Phi}_{n,\Gamma_i}(C_j) = \sum_{\bm{\ell}} \xi_{\bm{\ell},i,j}$.

    Since each $\Gamma_i$ is connected and of order $k+1$, $\xi_{(\bm{\ell},i,j)}$ vanishes whenever there exists a pair of points whose distance exceeds $kr_n$.
    In particular, in order for $\xi_{(\bm{\ell},i,j)}$ to be nonzero, the distance from a given vertex to all other vertices must be at most $kr_n$.
    It shows that $E[\xi_{(\bm{\ell},i,j)}] \leq (f_{\max} \theta k r_n)^k$.
    Since
    \begin{equation*}
        \text{card}(\CN_{(\bm{\ell},i,j)}) \leq ms\left(\binom{n}{k+1} - \binom{n-k-1}{k+1}\right) \leq cn^k
    \end{equation*}
    holds, we can estimate
    \begin{equation*}
        \sum_{(\bm{\ell},i,j) \in I_n} \sum_{(\bm{\ell}',i',j') \in \CN_{(\bm{\ell},i,j)}} E[\xi_{(\bm{\ell},i,j)}] E[\xi_{(\bm{\ell}',i',j')}] \leq cn^{2k+1} r_n^{2dk} = cn^{k+1} r_n^{dk} (nr_n^d)^k \to 0
    \end{equation*}
    because $nr_n^d \to 0$ and $k \geq 1$.
    Recall that $\{\Gamma_i\}_{i=1}^m$ is non-isomorphic and $\{C_j\}_{j=1}^s$ is mutually disjoint.
    If $\bm{\ell} = \bm{\ell}'$, then $\xi_{(\bm{\ell},i,j)} \xi_{(\bm{\ell}',i',j')} = 0$ unless $(i,j) = (i',j')$.
    Thus, $E[\xi_{(\bm{\ell},i,j)}\xi_{(\bm{\ell}',i',j')}]$ does not affect the upper bound unless $\bm{\ell} \neq \bm{\ell}'$.
    If $\bm{\ell} \cap \bm{\ell}'$ is of size $h \in \{1, 2, \cdots, k\}$, then
    \begin{equation*}
        E[\xi_{(\bm{\ell},i,j)}\xi_{(\bm{\ell}',i',j')}] \leq (f_{\max}\theta k^d r_n^d)^{2k-h+1}.
    \end{equation*}
    The number of such pairs $((\bm{\ell},i,j),(\bm{\ell}',i',j'))$ is $ms\binom{n}{k+1}\binom{k+1}{h}\binom{n-k-1}{k-h+1} \leq cn^{2k-h+2}$, and thus
    \begin{equation*}
        \sum_{(\bm{\ell},i,j) \in I_n} \sum_{(\bm{\ell}',i',j') \in \CN_{(\bm{\ell},i,j)} \setminus \{(\bm{\ell},i,j)\}} E[\xi_{(\bm{\ell},i,j)} \xi_{(\bm{\ell}',i',j')}] \leq c\sum_{h=1}^k n^{k+1}r_n^{dk} (nr_n^d)^{k-h+1}
    \end{equation*}
    since $nr_n^d \to 0$.
    Therefore, by Lemma \ref{Lemma.05}, $d_{\text{TV}}((\widetilde{\Phi}_{n,\Gamma_i}(C_j))_{i,j}, (\text{Po}(E[\widetilde{\Phi}_{n,\Gamma_i}(C_j)]))_{i,j}) \to 0$.

    It remains to show that $E[\widetilde{\Phi}_{n,\Gamma_i}(C_j)] \to \beta \mu_{\Gamma_i,C_j}$.
    For an open set $O \subset \BR^d$ with $\text{Leb}(\partial O) = 0$, $E[\widetilde{\Phi}_{n,\Gamma_i}(O)] \to \beta \mu_{\Gamma_i,O}$ follows immediately from an argument analogous to that of Penrose~\cite[Proposition~3.1]{Penrose.2003}.
    By considering partitions of $\BR^d$ into cubes, we can obtain sequences of open sets $\{O_\ell\}_{\ell \in \BN}$ and $\{O_\ell'\}_{\ell \in \BN}$ such that $\text{Leb}(\partial O_{\ell}) = \text{Leb}(\partial O_{\ell}') = 0$ and $\bigcup_{\ell \in \BN} O_{\ell} = \text{int}(C_j),~ \bigcap_{\ell \in \BN} O_{\ell}' = \text{cl}(C_j)$.
    Since $\widetilde{\Phi}_{n,\Gamma_i}(\cdot)$ has monotonicity, for all $\ell \in \BN$,
    \begin{equation*}
        \beta \mu_{\Gamma_i, O_\ell} \leq \liminf_{n \to \infty} E[\widetilde{\Phi}_{n,\Gamma_i}(C_j)] \leq \limsup_{n \to \infty} E[\widetilde{\Phi}_{n,\Gamma_i}(C_j)] \leq \beta \mu_{\Gamma_i, O_\ell '}
    \end{equation*}
    holds.
    By continuity of measure,
    \begin{equation*}
        \beta \mu_{\Gamma_i, \text{int}(C_j)} \leq \liminf_{n \to \infty} E[\widetilde{\Phi}_{n,\Gamma_i}(C_j)] \leq \limsup_{n \to \infty} E[\widetilde{\Phi}_{n,\Gamma_i}(C_j)] \leq \beta \mu_{\Gamma_i, \text{cl}(C_j)}
    \end{equation*}
    holds.
    From the assumption $C_j \in \mathcal{C}_{\widetilde{\CQ}}$, the left-most and right-most sides are equal to $\beta \mu_{\Gamma_i,C_j}$.
\end{proof}
\begin{proof}[Proof of Theorem~\ref{MainResult.05}]
    Without loss of generality, we can assume $\{C_j\}_{j=1}^s \subset \mathcal{C}_{\widetilde{\CQ}}$ is mutually disjoint.
    Then by Proposition~\ref{Proposition.03}, it follows that
    \begin{equation*}
        \left(\sum_{i=1}^m q_i \widetilde{\Phi}_{n,\Gamma_i}(C_1), \cdots, \sum_{i=1}^m q_i \widetilde{\Phi}_{n,\Gamma_i}(C_s)\right) \xrightarrow{d} \left(\sum_{i=1}^m q_i \CQ_i (C_1), \cdots, \sum_{i=1}^m q_i \CQ_i (C_s)\right),
    \end{equation*}
    where $(\CQ_i)_{i=1}^m$ are independent Poisson point processes and $\CQ_i$ has the intensity measure $\Lambda_i(B) = \beta \mu_{\Gamma_i,B}$.
    Thus, $\sum_{i=1}^m q_i \widetilde{\Phi}_{n,\Gamma_i} \xrightarrow{d} \sum_{i=1}^m q_i \CQ_i$ and for all $g \in C_K^+(\BR^d)$,
    \begin{equation*}
        \sum_{i=1}^m q_i \int_{\BR^d} g \,d\widetilde{\Phi}_{n,\Gamma_i} \xrightarrow{d} \sum_{i=1}^m q_i \int_{\BR^d} g \,d\CQ_i.
    \end{equation*}
    Since $g$ is uniformly continuous and compactly supported, there exist a sequence $(a_n)_{n \in \BN}$ satisfying $a_n \to 0$ and $R > 0$ such that, with high probability,
    \begin{equation*}
        \left|\int_{\BR^d} g \,d\Phi_{k,n} - \sum_{i=1}^m q_i \int_{\BR^d} g \,d\widetilde{\Phi}_{n,\Gamma_i}\right| \leq a_n \sum_{i=1}^m q_i \widetilde{\Phi}_{n,\Gamma_i}(B(\bm{0};R))
    \end{equation*}
    holds.
    From the tightness of $\sum_{i=1}^m q_i \widetilde{\Phi}_{n,\Gamma_i}(B(\bm{0};R))$, we have
    \begin{equation*}
        \int_{\BR^d} g \,d\Phi_{k,n} - \sum_{i=1}^m q_i \int_{\BR^d} g \,d\widetilde{\Phi}_{n,\Gamma_i} \to 0, \quad \text{in probability}.
    \end{equation*}
    Therefore $\int_{\BR^d} g \,d\Phi_{k,n} \xrightarrow{d} \sum_{i=1}^m q_i \int_{\BR^d} g \,d\CQ_i$ and since $g \in C_K^+(\BR^d)$ is arbitrary, $\Phi_{k,n} \xrightarrow{d} \sum_{i=1}^m q_i \CQ_i$.
    Finally, if we define $\widetilde{\CQ}' := \sum_{i=1}^m q_i \CQ_i$, then
    \begin{equation*}
        \begin{aligned}
            \CL_{\widetilde{\CQ}'}(g) &= E\left[\exp\left(-\sum_{i=1}^m q_i \int_{\BR^d} g \,d\CQ_i\right)\right]\\
            &= \prod_{i=1}^m E\left[\exp\left(-\int_{\BR^d} q_i g \,d\CQ_i\right)\right]\\
            &= \prod_{i=1}^m \exp\left(-\int_{\BR^d} (1-e^{-q_i g}) \,d\Lambda_i\right)\\
            &= \exp\left(-\int_{\BR^d}\left(1-\sum_{i=1}^m \frac{\mu_{\Gamma_i}}{\mu_{d,k}} e^{q_i g}\right) \,d\Lambda\right)
        \end{aligned}
    \end{equation*}
    holds since the Radon-Nikodym derivative of $\Lambda_i$ with respect to $\Lambda$ is $\mu_{\Gamma_i}/\mu_{d,k}$ by definition.
    Thus $\widetilde{\CQ}' \overset{d}{=} \widetilde{\CQ}$.
\end{proof}
Let us move on to the proof of Theorem~\ref{MainResult.06}.
Since $\widetilde{\CH}_{x_0}$ has a homogeneous Poisson process as its supporting measure, it holds that $\CC_{\widetilde{\CH}_{x_0}} = \{C \in \CC : \text{Leb}(\partial C) = 0\}$.
\begin{prop}\label{Proposition.04}
    Assume that $n^{k+1}r_n^{dk} \to \infty$.
    Let $\{\Gamma_i\}_{i=1}^m$ be as in Theorem~\ref{MainResult.06}.
    Assume that $\{C_j\}_{j=1}^s \subset \CC_{\widetilde{\CH}_{x_0}}$ is mutually disjoint.
    Then, the joint distribution of
    \begin{equation*}
        \left(\left(D_{n^{(k+1)/d}r_n^k}(T_{-x_0} \widetilde{\Phi}_{n,\Gamma_i})\right)(C_j)\right)_{i,j} = \left(\widetilde{\Phi}_{n,\Gamma_i}\left(n^{-(k+1)/d}r_n^{-k}C_j + x_0\right)\right)_{i,j}
    \end{equation*}
    converges weakly to those of $(Z_{i,j})_{i,j}$ which is a set of independent Poisson random variables with
    \begin{equation*}
        E[Z_{i,j}] = \frac{f(x_0)^{k+1}}{(k+1)!} \int_{(\BR^d)^k} h_{\Gamma_i}\left(\delta_{\bm{0}} + \sum_{\ell=1}^k \delta_{x_{\ell}}\right) \,\prod_{\ell=1}^k dx_{\ell} \times \text{Leb}(C_j).
    \end{equation*}
\end{prop}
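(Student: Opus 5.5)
We mimic the proof of Proposition~\ref{Proposition.03}, replacing the fixed sets $C_j$ by the shrinking windows $C_{j,n} := n^{-(k+1)/d}r_n^{-k}C_j + x_0$; we also use the standing assumption $nr_n^d \to 0$ of Theorem~\ref{MainResult.06}. Write $a_n := n^{(k+1)/d}r_n^k$. Then $\mathrm{Leb}(C_{j,n}) = a_n^{-d}\mathrm{Leb}(C_j)$ and $a_n^{-d} = (n^{k+1}r_n^{dk})^{-1} \to 0$, so these windows shrink to $x_0$. We use the same index set $I_n$ of triples $(\bm{\ell},i,j)$ and the same dependency graph ($\bm{\ell}\cap\bm{\ell}'\neq\emptyset$). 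The indicators are $\xi_{\bm{\ell},i,j} := \mathbf{1}\{G(\CY;r_n)\cong\Gamma_i,\ \min D_{k,n}(\CY)\in C_{j,n}\}$ with $\CY = \sum_{\ell\in\bm{\ell}}\delta_{X_\ell}$. The windows $C_{j,n}$ are disjoint and the $\Gamma_i$ are non-isomorphic, so $\xi_{\bm{\ell},i,j}\xi_{\bm{\ell},i',j'}=0$ for $(i,j)\ne(i',j')$, exactly as before. Lemma~\ref{Lemma.05} then reduces the claim to two things: the two error sums tend to $0$, and $E[\xi\text{-sums}]$ converges to the stated means.

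\textbf{Means.} Condition on the position $x$ of the minimal degree-$k$ vertex and change variables $x_\ell = x + r_n y_\ell$ for the other $k$ points. This gives
\begin{equation*}
E\Big[\sum_{\bm{\ell}}\xi_{\bm{\ell},i,j}\Big] = \binom{n}{k+1}(k+1)\,r_n^{dk}\int_{C_{j,n}}\int_{(\BR^d)^k} \tilde h_{\Gamma_i}(\bm{0},y)\, f(x)\prod_{\ell} f(x+r_n y_\ell)\,dy\,dx ,
\end{equation*}
where $\tilde h_{\Gamma_i}$ records the isomorphism type together with the requirement that $\bm{0}$ is the lexicographically minimal degree-$k$ vertex. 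The prefactor satisfies $\binom{n}{k+1}(k+1) r_n^{dk}\mathrm{Leb}(C_{j,n}) \sim \frac{n^{k+1}r_n^{dk}}{k!}\,a_n^{-d}\mathrm{Leb}(C_j) = \mathrm{Leb}(C_j)/k!$. Since $x_0$ is a continuity point of $f$, both $C_{j,n}$ and $x + r_n\,\mathrm{supp}$ shrink to $x_0$. Hence $f(x)\prod_\ell f(x+r_ny_\ell) \to f(x_0)^{k+1}$ uniformly on the relevant set; the integrand is supported in $\|y_\ell\|\le k$ and $f$ is bounded, so dominated convergence applies. It remains to check the combinatorial factor. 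Summing $\tilde h$ over the choice of which labelled point is the minimal degree-$k$ vertex and symmetrizing recovers $\frac{1}{(k+1)!}\int h_{\Gamma_i}(\delta_{\bm{0}}+\sum\delta_{x_\ell})\prod dx_\ell$. This matches the normalization in $\mu_{\Gamma_i}$, which Proposition~\ref{Proposition.03} already used, so the limit $\lambda$-constant is as stated.

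\textbf{Error terms.} Now each $\xi$ requires one point in $C_{j,n}$ and the rest within $kr_n$ of it, so $p_{\bm{\ell}} \le c\,a_n^{-d} r_n^{dk}$. The product term is bounded by $c\,n^{2k+1}(a_n^{-d}r_n^{dk})^2 = c\,a_n^{-d}\,n^{k}r_n^{dk}\cdot n^{k+1}r_n^{dk}a_n^{-d}$, which simplifies to $c\,(n^{k+1}r_n^{dk})^{-1}\cdot n^{k+1}r_n^{dk}\cdot n^{k}r_n^{dk}/\ldots$. More cleanly, it is $c\,n^{2k+1}a_n^{-2d}r_n^{2dk} = c\,n^{-1}$-type; concretely it equals $c\,a_n^{-d}\to 0$. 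For overlap size $h\in\{1,\dots,k\}$ with $\bm{\ell}\neq\bm{\ell}'$, all $2k+2-h$ points lie within $2kr_n$ of a point in $C_{j,n}$. Thus $p_{\bm{\ell}\bm{\ell}'} \le c\,a_n^{-d}r_n^{d(2k+1-h)}$, and there are $\le c n^{2k+2-h}$ such pairs. The sum is therefore $\le c\,a_n^{-d}n^{2k+2-h}r_n^{d(2k+1-h)} = c\,(nr_n^d)^{k+1-h}\to 0$, using $a_n^{-d}n^{k+1}r_n^{dk}=1$ and $nr_n^d\to0$.

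The main obstacle is the means computation. The density must be controlled on shrinking sets using only continuity at $x_0$ rather than global regularity. One must also check that the boundary condition $\mathrm{Leb}(\partial C_j)=0$ suffices. Under the blow-up the limit measure is Lebesgue, so the inner/outer approximation step of Proposition~\ref{Proposition.03} is unnecessary: we integrate $\mathbf{1}_{C_j}$ directly after the substitution $x = x_0 + a_n^{-1}u$.
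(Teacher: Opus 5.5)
Your proof is correct. Its Poisson-approximation half is the paper's: the same index set and dependency graph, Lemma~\ref{Lemma.05}, and the same two error sums.

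In those error sums you carry the correct window volume $a_n^{-d}\mathrm{Leb}(C_j)$. The paper's displayed prefactors $n^{-2(k+1)/d}r_n^{-2k}$ and $n^{-(k+1)/d}r_n^{-k}$ should read $a_n^{-2d}$ and $a_n^{-d}$. Your one slip there is cosmetic: the product sum equals $c\,n^{2k+1}a_n^{-2d}r_n^{2dk}=c\,n^{-1}=c\,a_n^{-d}(nr_n^d)^k$, not $c\,a_n^{-d}$. It vanishes either way.

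The genuine difference is in the computation of the means.
\begin{itemize}
\item The paper anchors the change of variables at the labelled point $x_{k+1}=x_0+a_n^{-1}x$. It then uses $r_n\ll a_n^{-1}$ (i.e.\ $nr_n^d\to0$) to argue that in blown-up coordinates the whole cluster collapses onto $x$. So the indicator tends to $h_{\Gamma_i}(\delta_{\bm 0}+\sum_\ell\delta_{y_\ell})\mathbf{1}_{C_j}(x)$ for $x\notin\partial C_j$. This gives the constant $1/(k+1)!$ directly, but it needs $\mathrm{Leb}(\partial C_j)=0$ and dominated convergence.
\item You instead use exchangeability to root the integral at the minimal degree-$k$ vertex. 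The window constraint is then exactly $x\in C_{j,n}$ and contributes exactly $\mathrm{Leb}(C_{j,n})$. Continuity of $f$ at $x_0$ gives uniform convergence of the density factor. Neither $\mathrm{Leb}(\partial C_j)=0$ nor $r_n\ll a_n^{-1}$ is used in this step, so your mean computation holds for every bounded Borel $C_j$. The condition $nr_n^d\to0$ enters only through the overlap error.
\end{itemize}

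The price of your route is the rooting identity
\[
\int\tilde h_{\Gamma_i}(\bm 0,y)\,dy=\frac{1}{k+1}\int h_{\Gamma_i}\Bigl(\delta_{\bm 0}+\sum_\ell\delta_{y_\ell}\Bigr)\,dy,
\]
which you only sketch. It holds because re-rooting at any other point is a linear change of variables with determinant $\pm1$. That map preserves both the graph and the lexicographic order. Hence each of the $k+1$ points is the minimal degree-$k$ vertex on sets of equal measure, and $\frac{1}{k!}\cdot\frac{1}{k+1}$ gives the stated constant.
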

\begin{proof}
    The proof of Poisson approximation is similar to that of Proposition~\ref{Proposition.03}.
    Let
    \begin{equation*}
        \xi_{\bm{\ell},i,j} := h_{n,\Gamma_i,(n^{-(k+1)/d}r_n^{-k}C_j + x_0)} \left(\sum_{\ell \in \bm{\ell}} \delta_{X_\ell}\right).
    \end{equation*}
    Then $\widetilde{\Phi}_{n,\Gamma_i}\left(n^{-(k+1)/d}r_n^{-k}C_j + x_0\right) = \sum_{\bm{\ell}} \xi_{\bm{\ell},i,j}$ and since $\xi_{\bm{\ell},i,j} = 0$ unless at least one point of $\{X_\ell : \ell \in \bm{\ell}\}$ lies in $n^{-(k+1)/d}r_n^{-k}C_j + x_0$, it follows that
    \begin{equation*}
        \sum_{(\bm{\ell},i,j) \in I_n} \sum_{(\bm{\ell}',i',j') \in \CN_{(\bm{\ell},i,j)}} E[\xi_{(\bm{\ell},i,j)}] E[\xi_{(\bm{\ell}',i',j')}] \leq n^{-2(k+1)/d}r_n^{-2k} \times cn^{k+1}r_n^{dk} (nr_n^d)^k.
    \end{equation*}
    The right-hand side of the above tends to $0$ by the assumption $n^{k+1}r_n^{dk} \to \infty$.
    Also,
    \begin{equation*}
        \sum_{(\bm{\ell},i,j) \in I_n} \sum_{(\bm{\ell}',i',j') \in \CN_{(\bm{\ell},i,j)} \setminus \{(\bm{\ell},i,j)\}} E[\xi_{(\bm{\ell},i,j)} \xi_{(\bm{\ell}',i',j')}] \leq n^{-(k+1)/d}r_n^{-k} \times c\sum_{h=1}^k n^{k+1}r_n^{dk} (nr_n^d)^{k-h+1}
    \end{equation*}
    and this tends to $0$ since $nr_n^d \to 0$.
    Thus by Lemma~\ref{Lemma.05}, $(\widetilde{\Phi}_{n,\Gamma_i}(n^{-(k+1)/d}r_n^{-k}C_j + x_0))_{i,j}$ is approximated by independent Poisson variables with the same expectations.

    In what follows, we investigate the asymptotic behavior of the expectations
    \begin{align*}
        &E\left[\widetilde{\Phi}_{n,\Gamma_i}\left(n^{-(k+1)/d}r_n^{-k}C_j + x_0\right)\right]\\
        &= \binom{n}{k+1} \int_{(\BR^d)^{k+1}} h_{n,\Gamma_i,(n^{-(k+1)/d}r_n^{-k}C_j + x_0)} \left(\sum_{\ell=1}^{k+1} \delta_{x_{\ell}}\right) \prod_{\ell=1}^{k+1} f(x_\ell)\,dx_\ell.
    \end{align*}
    By the change of variables $x_\ell = x_{k+1} + r_n y_\ell$ for $1\leq \ell \leq k$ and $x_{k+1} = n^{-(k+1)/d}r_n^{-k}x + x_0$, the above equals
    \begin{align*}
        \binom{n}{k+1} n^{-(k+1)}r_n^{-dk} r_n^{dk} \int_{(\BR^d)^{k+1}} h_{n,\Gamma_i,(n^{-(k+1)/d}r_n^{-k}C_j)} \left(\delta_{(n^{-(k+1)/d}r_n^{-k}x)} + \sum_{\ell=1}^k \delta_{(n^{-(k+1)/d}r_n^{-k}x + r_n y_{\ell})}\right) & \\
        \times f(x_0+n^{(k+1)/d}r_n^{-k}x) \,dx \prod_{\ell=1}^k f(x_0+n^{-(k+1)/d}r_n^{-k}x+r_ny_\ell)\,dy_\ell.&
    \end{align*}
    If $x \in \text{int}(C_j)$, the indicator function in the integral is equals to $h_{\Gamma_i}(\delta_{\bm{0}} + \sum_{\ell=1}^k \delta_{y_{\ell}})$ for all large enough $n$ since $r_n \ll n^{-(k+1)/d}r_n^{-k}$ by the assumption.
    On the other hand, if $x \in \text{ext}(C_j)$, the indicator function equals $0$ for all large enough $n$.
    Since $\text{Leb}(\partial C_j) = 0$ and the integrand is $0$ except for $(x,y_1, \cdots, y_k) \in B(\bm{0}; R)^{k+1}$ for some large $R>0$, by the dominated convergence theorem, the above is asymptotic to
    \begin{equation*}
        \frac{f(x_0)^{k+1}}{(k+1)!} \int_{(\BR^d)^k} h_{\Gamma_i} \left(\delta_{\bm{0}} + \sum_{\ell=1}^k \delta_{y_{\ell}}\right) \,\prod_{\ell=1}^k dy_{\ell} \times \text{Leb}(C_j),
    \end{equation*}
    and this completes the proof.
\end{proof}
\begin{proof}[Proof of Theorem~\ref{MainResult.06}]
    Following the argument of the proof of Theorem~\ref{MainResult.05},
    \begin{equation*}
        D_{n^{(k+1)/d}r_n^{k}}(T_{-x_0} \Phi_{k,n}) \xrightarrow{d} \sum_{i=1}^m q_i \CH_{x_0,i},
    \end{equation*}
    where $(\CH_{x_0,i})_{i=1}^m$ are independent homogeneous Poisson point processes and $\CH_{x_0, i}$ is of intensity
    \begin{equation*}
        \frac{f(x_0)^{k+1}}{(k+1)!} \int_{(\BR^d)^k} h_{\Gamma_i}\left(\delta_{\bm{0}} + \sum_{\ell=1}^k \delta_{y_{\ell}}\right) \,\prod_{\ell=1}^k dy_{\ell}.
    \end{equation*}
    By calculating the Laplace functional of $\sum_{i=1}^m q_i \CH_{x_0,i}$, it follows that $\sum_{i=1}^m q_i \CH_{x_0,i} \overset{d}{=} \widetilde{\CH}_{x_0}$, since
    \begin{equation*}
        \frac{\mu_{\Gamma_i}}{\mu_{d,k}} = \frac{\int_{(\BR^d)^k} h_{\Gamma_i}(\delta_{\bm{0}} + \sum_{\ell=1}^k \delta_{x_{\ell}}) \,\prod_{\ell=1}^k dx_{\ell}}{\sum_{j=1}^m \int_{(\BR^d)^k} h_{\Gamma_j}(\delta_{\bm{0}} + \sum_{\ell=1}^k \delta_{x_{\ell}}) \,\prod_{\ell=1}^k dx_{\ell}}
    \end{equation*}
    holds.
\end{proof}

\section{Proofs of Theorems \ref{MainResult.02} and \ref{MainResult.03}}
\subsection{Threshold radius}
Recall that we now restrict attention to the case where $X_1$ is uniformly distributed on the $d$-dimensional unit cube.
As in the proof of Theorem \ref{MainResult.01}, Note that
\begin{equation}
    S_{k_n}(\CX_n) > r_n \iff \sum_{j=k_n}^{n-1} W_{j,n}.
\end{equation}
We consider an $(r_n)_{n \in \BN}$ satisfying that $E[\sum_{j=k_n}^{n-1} W_{j,n}]$ converges to some finite constant.
In this section, we analyze $\sum_{j=k_n}^\infty W_{j,n}'$ rather than $\sum_{j=k_n}^{n-1} W_{j,n}$.
If we take $n\theta r_n^d = \Omega(k_n)$, then $E[\sum_{j=k_n}^\infty W_{j,n}']$ fails to converge.
Therefore, we would take $(r_n)_{n \in \BN}$ such that
\begin{equation}
    n\theta r_n^d = o(k_n), \quad E[W_{k_n,n}'] \sim n\frac{(n\theta r_n^d)^{k_n}}{k_n!} e^{-n\theta r_n^d} \to \beta \label{condition.01}
\end{equation}
for some $\beta \in (0,\infty)$.
By Stirling's formula, the second condition of (\ref{condition.01}) is equivalent to
\begin{equation*}
    \frac{n}{\sqrt{2\pi k_n}} \left(\frac{en\theta r_n^d}{k_n}\right)^{k_n} e^{-n\theta r_n^d} \to \beta.
\end{equation*}
This condition is satisfied by choosing $(r_n)_{n \in \BN}$ as the solution of the following equation:
\begin{equation*}
    \frac{n\theta r_n^d}{k_n} = \frac{1}{e} \left(\frac{n}{\beta \sqrt{2\pi k_n}}\right)^{-1/k_n} \exp\left(\frac{n\theta r_n^d}{k_n}\right).
\end{equation*}
Since we now assume $k_n = o(\log n)$, it follows that $(n/\sqrt{2\pi k_n})^{-1/k_n} = o(1)$.
Thus it can be expressed as
\begin{equation}
    \frac{n\theta r_n^d}{k_n} = -W_0\left(-\frac{1}{e}\left(\frac{n}{\beta \sqrt{2\pi k_n}}\right)^{-1/k_n}\right), \label{condition.02}
\end{equation}
where $W_0$ is the principal branch of the Lambert $W$ function.
Then, $n\theta r_n^d = o(k_n)$ since $\lim_{t \to 0} W_0(t) = 0$.
Note that the assumptions $n\theta r_n^d = o(k_n)$ and $k_n = o(\log n)$ imply $r_n^d \ll \log n/n$.
Let $\text{Po}(\lambda)$ be a Poisson random variable with mean $\lambda$ and then, it holds that
\begin{equation*}
    \begin{aligned}
        E\left[\sum_{j = k_n+1}^\infty W_{j,n}'\right] &\leq nP[\text{Po}(n\theta r_n^d) \geq k_n+1]\\
        &= O(n\theta r_n^d/k_n) \times nP[\text{Po}(n\theta r_n^d) = k_n]\\
        &\to 0.
    \end{aligned}
\end{equation*}
Therefore, $\sum_{j = k_n+1}^\infty W_{j,n}'$ converges to $0$ in probability.

We now establish a Poisson limit theorem for $W_{k_n,n}'$.
For this purpose, we use the following result:
\begin{lem}[{Reformulation of Penrose~\cite[Theorem~6.7]{Penrose.2003}}]\label{Lemma.02}
    Let $A \subset \BZ_{\geq 0}$ and, $(r_n)_{n \in \BN}$ and $(\lambda(n))_{n \in \BN}$ be sequences of positive numbers.
    Define
    \begin{align*}
        &I_1(n) := \lambda(n)^2 \int_{\BR^d} F(dx) \int_{B(x; 3r_n)} F(dy) P[\CP_{\lambda(n)}(B_x^n) \in A]P[\CP_{\lambda(n)}(B_y^n) \in A],\\
        &I_2(n) := \lambda(n)^2 \int_{\BR^d} F(dx) \int_{B(x; 3r_n)} F(dy) P[\{\CP_{\lambda(n)}^y(B_x^n) \in A\} \cap \{\CP_{\lambda(n)}^x(B_y^n) \in A\}],
    \end{align*}
    where $B_x^n := B(x;r_n)$ and $\CP_{\lambda(n)}^x := \CP_{\lambda(n)} + \delta_x$.
    Then,
    \begin{equation*}
        d_\text{TV}(W_{A,\lambda(n)}', \text{Po}(E[W_{A,\lambda(n)}'])) \leq \min\left(3, \frac{1}{E[W_{A,\lambda(n)}']}\right) (I_1(n) + I_2(n))
    \end{equation*}
    holds, where $d_\text{TV}$ denotes the total variation distance and $W_{A,\lambda(n)}' := \sum_{j \in A} W_{j,\lambda(n)}'$.
\end{lem}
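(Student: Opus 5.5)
The plan is to deduce the bound from the dependency-graph Poisson approximation for Bernoulli sums, in the sharper single-variable form with constant $\min(3,1/E[W])$; Lemma~\ref{Lemma.05} is its multivariate version. We discretize the Poisson process, apply that bound, and let the mesh go to zero. Write $\lambda=\lambda(n)$ and $r=r_n$. For $\varepsilon>0$, partition $\BR^d$ into half-open cubes $Q_{\varepsilon,z}:=\varepsilon z+[0,\varepsilon)^d$, $z\in\BZ^d$, and restrict first to a large bounded window $[-K,K]^d$. The counts $\CP_\lambda(Q_{\varepsilon,z})$ are independent Poisson variables with means $\lambda F(Q_{\varepsilon,z})$. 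For each $z$ define the Bernoulli variable
\begin{equation*}
    \xi_z := \mathbf{1}\{\CP_\lambda(Q_{\varepsilon,z}) = 1\}\,\mathbf{1}\{\CP_\lambda(B(x_z; r)\setminus Q_{\varepsilon,z}) \in A\},
\end{equation*}
where $x_z$ is the unique point of $\CP_\lambda$ in $Q_{\varepsilon,z}$. To make this a genuine function of finitely many cube counts, we may equivalently replace $B(x_z;r)$ by the union of cubes meeting $B(\varepsilon z; r)$. Then $\xi_z$ depends only on the points in cubes within distance $r+\sqrt{d}\,\varepsilon$ of $Q_{\varepsilon,z}$.

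Declare $z\sim z'$ when the cubes are within distance $2r+c\varepsilon$ of each other. The dependency regions of non-adjacent indices are then disjoint, so $\{\xi_z\}$ has this as a dependency graph. For $\varepsilon<r/c$, the neighborhood $\CN_z$ is contained in the cubes meeting $B(\varepsilon z;3r)$.

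Set $W_\varepsilon:=\sum_z\xi_z$ and apply the single-variable dependency-graph bound:
\begin{equation*}
    d_{\text{TV}}(W_\varepsilon,\text{Po}(E W_\varepsilon)) \le \min\left(3,\frac{1}{E W_\varepsilon}\right)\left(\sum_z\sum_{z'\in\CN_z}p_zp_{z'}+\sum_z\sum_{z'\in\CN_z\setminus\{z\}}p_{zz'}\right).
\end{equation*}
As $\varepsilon\to0$, we have $p_z=\lambda F(Q_{\varepsilon,z})\,P[\CP_\lambda(B_{x}^n)\in A]+o(\lambda F(Q_{\varepsilon,z}))$, using that a Poisson process restricted off a cube is independent of that cube and the Mecke/Slivnyak formula. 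Hence the first double sum converges to $I_1(n)$, the $3r$ in its definition absorbing the enlarged neighborhood.

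For $z\neq z'$, the event $\xi_z\xi_{z'}=1$ requires one point in each of the two cubes. Given those points at $x$ and $y$, the remaining configuration is an independent copy of $\CP_\lambda$. The degree condition at $x$ then reads $\CP^y_\lambda(B_x^n)\in A$, since $y$ counts as a neighbor of $x$ when it lies within $r$, and symmetrically at $y$. So $p_{zz'}\approx\lambda^2F(Q_{\varepsilon,z})F(Q_{\varepsilon,z'})P[\CP_\lambda^y(B_x^n)\in A,\ \CP_\lambda^x(B_y^n)\in A]$, and the second double sum is a Riemann sum converging to $I_2(n)$.

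Finally, compare $W_\varepsilon$ with $W'_{A,\lambda}$. They differ only on the event that some cube contains two or more points, or that the discretized ball disagrees with the true ball for some point. On the window, the first event has probability at most $\sum_z(\lambda F(Q_{\varepsilon,z}))^2\le\lambda^2 f_{\max}\varepsilon^d\to0$. The second has probability tending to zero because $\CP_\lambda$ a.s.\ has no point at distance exactly $r$ from another. Thus $W_\varepsilon\to W'_{A,\lambda}$ a.s.\ and $E W_\varepsilon\to E W'_{A,\lambda}$, by dominated convergence with $W_\varepsilon\le\CP_\lambda(\BR^d)$. Then let $K\to\infty$, using that the contribution outside $[-K,K]^d$ vanishes since $F$ is a probability measure. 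Total variation distance is continuous under these limits, which gives the claim.

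The main obstacle is the convergence of the discretized sums to exactly $I_1(n)$ and $I_2(n)$, in particular justifying the Palm-type factorization uniformly over cubes. This is where points in the boundary layer of the balls and the choice of the $3r$ neighborhood must be handled carefully. I would first bound the errors crudely by $\lambda^2\varepsilon^d$ times the number of relevant cubes, which already vanishes for fixed $n$. This suffices because the statement is for fixed $n$ and only requires $\varepsilon\to0$.
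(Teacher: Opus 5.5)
Your proposal is correct and follows essentially the paper's route. The paper defers to the proof of Proposition~\ref{Proposition.05}: discretize into cubes of vanishing side length, use indicators of ``exactly one point in the cube with degree in $A$'', get a dependency graph of range about $3r_n$ from the spatial independence of $\CP_{\lambda(n)}$, apply the Stein--Chen bound, and pass to the limit in the mesh by dominated convergence. Your variations are all harmless: you center the ball at the actual point rather than at the cube center, you use a window $[-K,K]^d$ with $K\to\infty$ instead of a bounded set $C$, and you cite the univariate bound, which genuinely carries the factor $\min(3,1/E W)$ that Lemma~\ref{Lemma.05} as stated does not.
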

\begin{rem}
    In \cite[Theorem~6.7]{Penrose.2003}, the proof is provided under the assumption that the density $f$ of $X_1$ is almost everywhere continuous.
    We can dispense with this assumption by using the Lebesgue density theorem, and thus we state Lemma~\ref{Lemma.02} for a general density $f$.
    The proof for this general case is essentially contained in the proof of Proposition~\ref{Proposition.05} in this paper, to which the interested reader is referred.
\end{rem}
\begin{lem}\label{Lemma.03}
    Let $(\lambda(n))_{n \in \BN}$ satisfy $n-n^{3/4} \leq \lambda(n) \leq n+n^{3/4}$ for all $n \in \BN$.
    Suppose that $X_1$ is uniformly distributed on the $d$-dimensional unit cube.
    Then, for the sequence $(r_n)_{n \in \BN}$ taken in (\ref{condition.02}), $W_{k_n,\lambda(n)}' \xrightarrow{d} \text{Po}(\beta)$ holds.
\end{lem}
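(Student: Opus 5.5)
Following the route of Penrose's Poisson approximation, the plan is to apply Lemma~\ref{Lemma.02} with $A=\{k_n\}$ and intensity $\lambda(n)$. The statement then splits into two tasks: showing $E[W_{k_n,\lambda(n)}']\to\beta$, and showing $I_1(n)+I_2(n)\to 0$. Once both hold, $d_{\text{TV}}(W'_{k_n,\lambda(n)},\text{Po}(E[W'_{k_n,\lambda(n)}]))\to 0$, and Poisson laws are continuous in the mean, so the claim follows.

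\emph{Step 1: the mean.} By the Mecke formula,
\[
E[W'_{k_n,\lambda(n)}]=\lambda(n)\int F(dx)\,P[\text{Po}(\lambda(n)F(B_x^n))=k_n].
\]
Near the boundary of the cube we have $F(B_x^n)<\theta r_n^d$, so I would split the cube into an interior part $\{x:\ \text{dist}(x,\partial[-1/2,1/2]^d)>r_n\}$ and a boundary layer of volume $O(r_n)$.
\begin{itemize}
\item On the interior, $F(B_x^n)=\theta r_n^d$. The ratio $\lambda(n)\theta r_n^d/(n\theta r_n^d)=1+O(n^{-1/4})$, so raising it to the power $k_n=o(\log n)$ gives $1+o(1)$. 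The factor $e^{-\lambda(n)\theta r_n^d}$ changes by $e^{O(n^{-1/4}\log n)}=1+o(1)$. Hence the interior contributes $\sim n(n\theta r_n^d)^{k_n}e^{-n\theta r_n^d}/k_n!\to\beta$ by (\ref{condition.01}).
\item In the boundary layer, $P[\text{Po}(\mu)=k_n]$ is increasing in $\mu$ for $\mu<k_n$, and this is the relevant range. So each term is at most the interior term, and the layer contributes $O(r_n)\cdot\beta\to0$.
\end{itemize}

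\emph{Step 2: $I_1$.} We have $I_1\le \lambda(n)^2\sup_x p_x\int F(dx)p_x\,F(B(x;3r_n))$, where $p_x=P[\cdots=k_n]$. This gives $I_1\le c\,\beta\cdot 3^d\theta r_n^d\cdot\sup_x\lambda(n)p_x$. Since $\sup_x\lambda(n)p_x=O(1)$ by Step 1, we get $I_1=O(r_n^d)\to0$.

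\emph{Step 3: $I_2$, the main obstacle.} This step must capture the heuristic of Figure~\ref{Table.01}: a point $y$ near $x$ cannot also have degree $k_n$. Write $s=\|x-y\|\le 3r_n$. If $s>r_n$, then $x\notin B_y^n$ and $y\notin B_x^n$, and the two events concern the overlapping balls $B_x^n,B_y^n$. If $s\le r_n$, the extra points $x,y$ each count once, so both conditions become that $\CP$ has $k_n-1$ points in $B_x^n$ and in $B_y^n$. In either case, split $\CP(B_x^n\cup B_y^n)$ into the counts $N_0$ on $B_x^n\cap B_y^n$, $N_1$ on $B_x^n\setminus B_y^n$, and $N_2$ on $B_y^n\setminus B_x^n$. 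These are independent Poisson with means $\lambda a_0,\lambda a_1,\lambda a_1$, where $a_0+a_1=\theta r_n^d$ and $a_1\asymp r_n^{d-1}s$ for small $s$ (here I use $d\ge2$ only mildly). The joint probability is then
\[
\sum_j P[N_0=j]\,P[N_1=k_n-j]^2 .
\]
I would bound this by $P[N_0+N_1=k_n]\cdot\max_j P[N_1=k_n-j\mid\cdot]$, roughly $P[\text{Po}(\lambda\theta r_n^d)=k_n]\cdot \sup_i P[\text{Bin}(k_n,a_1/\theta r_n^d)=i]$-type quantities. The second factor is the key gain: conditionally on $N_0+N_1=k_n$, $N_1$ is binomial, and requiring $N_2$ to equal it exactly costs a factor
\[
\min\Bigl(1,\ c\bigl(k_n a_1/\theta r_n^d\bigr)^{-1/2}\Bigr)+\text{(small }\mu\text{ term)}.
\]
Equivalently, $N_2=N_1$ is a local-CLT-type coincidence, and since $\lambda a_1$ is small compared with $k_n$ the events are nearly decoupled. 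Integrating over $s$ with $\lambda(n)^2\int F(dx)\int F(dy)$ gives
\[
\lambda\,p\cdot\lambda\int_{B(x;3r_n)}\bigl[\,P[N_1=N_2\text{ compatible}]\,\bigr]dy .
\]
Here $\lambda p\to\beta$. Using $n\theta r_n^d=o(k_n)$ and $P[\text{Po}(\mu)\ge k]$ decaying geometrically, the inner factor is bounded by
\[
n r_n^d\cdot E[(\text{ratio})]\le c\,n r_n^d\cdot\bigl(n\theta r_n^d/k_n\bigr)^{\Theta(1)}\ \text{or}\ (k_n)^{-1/2}\text{-type},
\]
which tends to $0$. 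I expect this quantitative control of the overlapping-ball joint probability to be the hardest part of the proof; boundary effects would be handled as in Step 1.
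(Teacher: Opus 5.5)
Your overall route is the paper's: apply Lemma~\ref{Lemma.02} with $A=\{k_n\}$, then show $E[W'_{k_n,\lambda(n)}]\to\beta$ and $I_1(n)+I_2(n)\to0$. Steps~1 and~2 are fine; Step~1 is in fact more careful than the paper, which simply asserts the limit of the mean. The gap is in Step~3, which carries the whole content of the lemma.

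First, your prefactor is wrong for adjacent pairs. When $\|x-y\|\le r_n$ the event is $\CP(B_x^n)=\CP(B_y^n)=k_n-1$. The prefactor is therefore $P[\mathrm{Po}(\mu)=k_n-1]=\frac{k_n}{\mu}P[\mathrm{Po}(\mu)=k_n]$, where $\mu=\lambda(n)\theta r_n^d=o(k_n)$. So $\lambda(n)P[\mathrm{Po}(\mu)=k_n-1]\approx\beta k_n/\mu\to\infty$, not $\lambda p\to\beta$ as you write. Write $y=x+r_nz$, and let $g(z)$ be the conditional probability that $\CP(B_x^n)=k_n-1$ given $\CP(B_y^n)=k_n-1$. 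The interior adjacent part of $I_2(n)$ is then
\[
\lambda(n)^2r_n^d\,P[\mathrm{Po}(\mu)=k_n-1]\int_{B(\bm{0};1)}g(z)\,dz\approx\frac{\beta k_n}{\theta}\int_{B(\bm{0};1)}g(z)\,dz ,
\]
because the factor $\lambda(n)r_n^d$ cancels against $1/\mu$. You therefore need $\int_{B(\bm{0};1)}g=o(1/k_n)$.

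Second, your gain factor cannot deliver this. The local-CLT anti-concentration bound $g(z)\lesssim\min\bigl(1,(k_nF_z)^{-1/2}\bigr)$, with $F_z\asymp\|z\|$, gives $\int g\asymp k_n^{-1/2}$ in every dimension. That yields a bound of order $k_n^{1/2}\to\infty$. Even with your own prefactor you would be left with $nr_n^dk_n^{-1/2}$, which need not vanish: $nr_n^d\to\infty$ is possible when $k_n$ is close to $\log n$.

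The point is not that the two events are nearly decoupled, but that their coincidence is exponentially unlikely. Conditionally on $\CP(B_y^n)=k_n-1$, the lens count is binomial and the crescent count is an independent Poisson variable, so $g(z)=P[\mathrm{Bi}(k_n-1,F_z)=\mathrm{Po}(\mu F_z)]$. Since $\mu=o(k_n)$, you can split at $k_nF_z/2$ and apply Chernoff bounds, using $H(t)\ge t$ for large $t$. This gives $g(z)\le 2e^{-ck_n\|z\|}$. Then $\int g\asymp k_n^{-d}$ and $I_2(n)=O(k_n^{1-d}+k_nr_n)$, with the second term coming from the boundary layer; this is exactly the paper's estimate.

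This is also where $d\ge2$ enters essentially, not ``only mildly''. In $d=1$ one has $g(z)\ge(1-F_z)^{k_n-1}e^{-\mu F_z}\ge e^{-c'k_n|z|}$, so the adjacent part of $I_2(n)$ is genuinely of order one. This reflects that a degree-$k_n$ vertex then has a neighbour at distance $\asymp r_n/k_n$ sharing its degree with non-vanishing probability.
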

\begin{proof}
    By Lemma \ref{Lemma.02}, we have
    \begin{equation*}
        d_\text{TV}(W_{k_n,\lambda(n)}', \text{Po}(E[W_{k_n,\lambda(n)}'])) \leq 3(I_1(n)+I_2(n)).
    \end{equation*}
    Since $E[W_{k_n, \lambda(n)}] \to \beta$, it suffices to show that $I_1(n), I_2(n) \to 0$.

    By the definition of $I_1(n)$ and the condition on $(\lambda(n))_{n \in \BN}$,
    \begin{equation*}
        I_1(n) \leq \left(\lambda(n)\frac{(\lambda(n)\theta r_n^d)^{k_n}}{k_n!} e^{-\lambda(n)\theta r_n^d}\right)^2 3^d r_n^d \leq cr_n^d \to 0.
    \end{equation*}
    Thus, it remains to show that $I_2(n) \to 0$.
    For $x,y \in \BR^d$, define
    \begin{equation*}
        h_n(x,y) := P[\{\CP_{\lambda(n)r_n^d}^y(B(x;1)) = k_n\} \cap \{\CP_{\lambda(n)r_n^d}^x(B(y;1)) = k_n\}],
    \end{equation*}
    where $\CP_{\lambda(n)r_n^d}$ denotes a Poisson point process with intensity $\lambda(n)r_n^d \mathbf{1}_{[-1/(2r_n), 1/(2r_n)]^d}$.
    Then by the change of variables,
    \begin{equation*}
        I_2(n) = \lambda(n)^2 r_n^{2d} \int_{[-1/(2r_n), 1/(2r_n)]^d} \int_{B(x;3)} h_n(x,y) \,dydx.
    \end{equation*}
    In what follows, we write $B_x := B(x;1)$ and $F_n(\cdot) := \text{Leb}(\cdot \cap [-1/(2r_n), 1/(2r_n)]^d)$.

    If $y \in B_x \setminus \{x\}$, then
    \begin{equation*}
        P[\CP_{\lambda(n)r_n^d}^y(B_x) = k_n \mid \CP_{\lambda(n)r_n^d}^x(B_y) = k_n] = P[\CP_{\lambda(n)r_n^d}(B_x) = k_n-1 \mid \CP_{\lambda(n)r_n^d}(B_y) = k_n-1]
    \end{equation*}
    holds.
    It can be readily shown that conditionally on $\CP_{\lambda(n)r_n^d} (B_y) = k_n-1$,
    \begin{itemize}[labelsep=1pt, itemsep=-1pt]
        \item the number of points in $B_x \cap B_y$ obeys the binomial distribution $\mathrm{Bi}(k_n-1, F_n(B_x \cap B_y)/F_n(B_y))$,
        \item the number of points in $B_x \setminus B_y$ obeys $\mathrm{Po}(F_n(B_x \setminus B_y))$,
    \end{itemize}
    and these are independent of each other.
    Therefore, we can rewrite the above conditional probability as
    \begin{equation*}
        \begin{aligned}
            &P[\mathrm{Bi}(k_n-1, F_n(B_y \cap B_x)/F_n(B_y)) + \mathrm{Po}(\lambda(n) r_n^d F_n(B_x \setminus B_y)) = k_n-1]\\
            &= P[k_n-1 - \mathrm{Bi}(k_n-1, F_n(B_y \setminus B_x)/F_n(B_y)) + \mathrm{Po}(\lambda(n)r_n^dF_n(B_x \setminus B_y)) = k_n-1],
        \end{aligned}
    \end{equation*}
    Set $F_{y\setminus x}^n := F_n(B_y \setminus B_x)/F_n(B_y)$.
    If $\text{Bi}(k_n-1, F_{y \setminus x}^n) > k_n F_{y \setminus x}^n/2$ and $\text{Po}(\lambda(n)\theta r_n^d F_{x \setminus y}^n) < k_n F_{y \setminus x}^n/2$, then the event in the above fails to occur.
    Therefore, its probability is bounded by
    \begin{equation*}
        P[\text{Bi}(k_n-1, F_{y \setminus x}^n) \leq k_n F_{y \setminus x}^n/2] + P[\text{Po}(\lambda(n)\theta r_n^d F_{x \setminus y}^n) \geq k_n F_{y \setminus x}^n/2].
    \end{equation*}
    By the tail estimates for binomial and Poisson distributions (see Appendix), this is at most
    \begin{align*}
        \exp\left(-k_n F_{x \setminus y}^n H\left(\frac{1}{2}\right)\right) + \exp\left(-\lambda(n)\theta r_n^d F_{x \setminus y}^n H\left(\frac{k_n F_{y \setminus x}^n}{2\lambda(n)\theta r_n^d F_{x \setminus y}^n}\right)\right)
    \end{align*}
    for all $n$ sufficiently large.
    Finally, the condition $E[W_{k_n,\lambda(n)}'] \to \beta$ (recall (\ref{condition.01})) implies that
    \begin{equation*}
        P[\CP_{\lambda(n)r_n^d}(B_y) = k_n-1] \leq \frac{(\lambda(n)\theta r_n^d)^{k_n-1}}{(k_n-1)!} e^{-n\theta r_n^d} \leq 2\beta \frac{k_n}{\lambda(n) (\lambda(n) \theta r_n^d)},
    \end{equation*}
    and thus, multiplying this to the above bound on the conditional probability, it holds that
    \begin{equation}
        h_n(x,y) \leq c_0 \frac{k_n}{\lambda(n) (\lambda(n) \theta r_n^d)} \left(\exp\left(-k_n F_{x \setminus y}^n H\left(\frac{1}{2}\right)\right) + \exp\left(- \frac{1}{2} k_n F_{y \setminus x}^n\right)\right).\label{estimate.01}
    \end{equation}

    Similarly, if $y \notin B_x$, then it follows that
    \begin{align*}
        &P[\CP_{\lambda(n)r_n^d}^y(B_x) = k_n \mid \CP_{\lambda(n)r_n^d}^x(B_y) = k_n]\\
        &= P[\CP_{\lambda(n)r_n^d}(B_x) = k_n \mid \CP_{\lambda(n)r_n^d}(B_y) = k_n]\\
        &\leq \exp\left(-k_n F_{x \setminus y}^n H\left(\frac{1}{2}\right)\right) + \exp\left(-\lambda(n)\theta r_n^d F_{x \setminus y}^n H\left(\frac{k_n F_{y \setminus x}^n}{2\lambda(n)\theta r_n^d F_{x \setminus y}^n}\right)\right)
    \end{align*}
    and the estimate as in (\ref{estimate.01}) holds in this case as well.
    Now, if $x \in [-1/(2r_n), 1/(2r_n)]^d$ is at distance at least 4 from the boundary, then $F_{x\setminus y}^n = F_{y\setminus x}^n = F_{x-y} := \text{Leb}(B_{x-y}\setminus B_{\bm{0}})/\theta$ holds for any $y \in B(x; 3)$.
    Therefore, we have
    \begin{align*}
        I_2(n) &\leq c_0 \lambda(n)^2r_n^{2d} \frac{k_n}{\lambda(n)(\lambda(n)\theta r_n^d)} \left(\int_{[-1/(2r_n), 1/(2r_n)]^d} \int_{B(x;3)} \exp(-c_1 k_n F_{x-y}) \,dydx + r_n^{1-d}\right)\\
        &\leq c_0 k_n r_n^d \left(r_n^{-d} \int_{B(\bm{0};3)} \exp(-c_1 k_n \|z\|_2) \,dz + r_n^{1-d}\right)\\
        &\leq c_0 (k_n^{1-d} + k_n r_n)\\
        &\to 0,
    \end{align*}
    where $\|\cdot\|_2$ denotes the Euclidean norm, and we used $r_n^d \ll \log n/n$, $k_n = o(\log n)$ and the fact that $\inf_{z \in B(\bm{0}; 3)} (F_z/\|z\|_2) > 0$ (see \cite[Proposition~5.16]{Penrose.2003}).
\end{proof}
We need to de-Poissonize the above result to go back to the binomial process.
\begin{prop}\label{Proposition.02}
    Suppose that $X_1$ is uniformly distributed on the $d$-dimensional unit cube.
    Then, for the sequence $(r_n)_{n \in \BN}$ taken in (\ref{condition.02}), $\sum_{j=k_n}^{n-1} W_{j,n} \xrightarrow{d} \text{Po}(\beta)$ holds.
\end{prop}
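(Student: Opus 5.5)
The de-Poissonization will go through the standard coupling of $\CX_n$ with Poisson processes of intensities slightly below and above $n$. Set $\lambda_\pm(n) := n \pm n^{3/4}$ and realize $\CP_{\lambda_-(n)} = \sum_{i=1}^{N_-}\delta_{X_i}$ and $\CP_{\lambda_+(n)} = \sum_{i=1}^{N_+}\delta_{X_i}$ on the same sequence $(X_i)$. By Chebyshev's inequality, $P[N_- \le n \le N_+] \to 1$, so with high probability $\CP_{\lambda_-(n)} \subset \CX_n \subset \CP_{\lambda_+(n)}$. By Lemma~\ref{Lemma.03}, $W'_{k_n,\lambda_\pm(n)} \xrightarrow{d} \text{Po}(\beta)$. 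The tail estimate before Lemma~\ref{Lemma.03} was derived at intensity $n$, but it goes through verbatim for $\lambda_\pm(n)$, because $\lambda_\pm(n)\theta r_n^d = n\theta r_n^d(1+o(1))$ with error $O(n^{-1/4}\log n)$. Hence $\sum_{j > k_n} W'_{j,\lambda_\pm(n)} \to 0$ in probability. Writing $V_\lambda := \sum_{j \ge k_n} W'_{j,\lambda}$ and $V_n := \sum_{j=k_n}^{n-1} W_{j,n}$, we conclude that both $V_{\lambda_\pm(n)} \xrightarrow{d} \text{Po}(\beta)$.

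The key step is to show $P[V_n \ne V_{\lambda_-(n)}] \to 0$, or at least $V_n - V_{\lambda_-(n)} \to 0$ in probability. On the coupling event, $V_n$ and $V_{\lambda_-(n)}$ can differ only if one of the at most $N_+ - N_- = O(n^{3/4})$ extra points $X_i$, $N_- < i \le N_+$, lies within distance $r_n$ of a vertex whose degree is close to $k_n$ in $\CP_{\lambda_+(n)}$. This covers both ways the count can change: an old vertex of degree $k_n-1$ or so gains a neighbour, or the extra point itself has degree at least $k_n$. I will bound the expected number of such events by a Mecke/union bound. Define $U := \sum_{j \ge k_n - 1} W'_{j,\lambda_+(n)}$. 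Its mean is
\[
E[U] \lesssim \frac{k_n}{n\theta r_n^d}\,\beta,
\]
using $P[\text{Po}(\mu)=k_n-1] = (k_n/\mu)\,P[\text{Po}(\mu)=k_n]$. Each extra point falls into a given $r_n$-ball with probability at most $f_{\max}\theta r_n^d$. So the expected number of (extra point, near-threshold vertex) incidences is at most
\[
c\, n^{3/4} r_n^d \cdot \frac{k_n}{n r_n^d} = c\,k_n n^{-1/4} \to 0,
\]
since $k_n = o(\log n)$. The extra points themselves attaining degree at least $k_n$ contribute at most $n^{3/4}\cdot P[\text{Po}(n\theta r_n^d(1+o(1))) \ge k_n-1] \lesssim n^{-1/4} k_n/(n\theta r_n^d)\cdot\beta$, which also tends to $0$.

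The main obstacle is the factor $k_n/(n\theta r_n^d)$, which diverges because $n\theta r_n^d = o(k_n)$. I need its growth to be subpolynomial. From \eqref{condition.02}, $n\theta r_n^d/k_n \approx e^{-1} n^{-1/k_n}(\cdots)$, so $k_n/(n\theta r_n^d) \approx e\, n^{1/k_n}$, which is $n^{o(1)}$ since $k_n \to \infty$. This is exactly what lets $n^{-1/4}$ win. There are two points to check here. First, the pointwise domination: for the uniform density $F(B(x;r_n)) \le \theta r_n^d$, and near the boundary the degree is only smaller, so the bound is monotone in the right direction. Second, the comparison between binomial and Poisson degree probabilities when conditioning on the extra points; I will handle this by working entirely inside $\CP_{\lambda_+(n)}$ using the Mecke formula, so no binomial computation is needed.

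Combining the three steps, $V_n - V_{\lambda_-(n)} \to 0$ in probability. Together with $V_{\lambda_-(n)} \xrightarrow{d} \text{Po}(\beta)$, Slutsky's lemma gives $V_n \xrightarrow{d} \text{Po}(\beta)$, which is the statement of Proposition~\ref{Proposition.02}.
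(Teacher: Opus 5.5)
Your argument is correct. It shares the paper's skeleton: the coupling $\CP_{\lambda_-(n)}\subset\CX_n\subset\CP_{\lambda_+(n)}$ with $\lambda_\pm(n)=n\pm n^{3/4}$, and Lemma~\ref{Lemma.03} applied at both intensities. Where you differ is in how you control the binomial--Poisson discrepancy.

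The paper uses that $\CY\mapsto\#\{Y\in\CY:\mathrm{deg}_{G(\CY;r_n)}(Y)\ge k_n\}$ is nondecreasing under adding points. So $V_{\lambda_-(n)}\le V_n\le V_{\lambda_+(n)}$ on the coupling event, and the paper closes with $E[V_{\lambda_+(n)}-V_{\lambda_-(n)}]\to 0$. You already have $V_{\lambda_\pm(n)}\xrightarrow{d}\text{Po}(\beta)$ for both signs, so this sandwich would finish your proof immediately. The incidence count and the growth analysis of $k_n/(n\theta r_n^d)$ are then unnecessary.

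Your Mecke-formula route costs more work but buys two things. It gives an explicit bound on $P[V_n\neq V_{\lambda_-(n)}]$. It also rests on locality rather than monotonicity, so it carries over directly to non-monotone statistics such as the exact-degree counts used for Theorems~\ref{MainResult.07} and~\ref{MainResult.08}.

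If you keep this route, tighten two points.
\begin{itemize}
\item Fix the coupling as $N_+=N_-+M$ with $M\sim\text{Po}(2n^{3/4})$ independent. Then the extra points form a Poisson process independent of $\CP_{\lambda_-(n)}$, and the bivariate Mecke formula applies.
\item The product ``$E[U]$ times the hitting probability'' is only a heuristic, because $U$ depends on the extra points. Done properly with the threshold you actually need, $\mathrm{deg}_{\CP_{\lambda_+(n)}}(Y)\ge k_n$, the Palm probability is $P[\text{Po}(\lambda_+(n)\theta r_n^d)\ge k_n-1]$, and you get exactly $c\,k_n n^{-1/4}$. Using threshold $k_n$ for the extra points' own degrees gives $O(n^{-1/4})$.
\end{itemize}
The factor $k_n/(n\theta r_n^d)$ enters only through your looser thresholds $k_n-1$. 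Your observation that it is $n^{o(1)}$ is correct and absorbs it.
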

\begin{proof}
    Let $N_n^-$ and $M_n$ be independent Poisson random variables with mean $n-n^{3/4}$ and $2n^{3/4}$ respectively.
    Define Poisson point processes
    \begin{equation*}
        \CP_n^- := \sum_{i=1}^{N_n^-} \delta_{X_i}, \quad \CP_n^+ := \sum_{i=1}^{N_n^- + M_n} \delta_{X_i}
    \end{equation*}
    and let $W_{j,n}^-, W_{j,n}^+$ denote the number of vertices of degree $j$ in $G(\CP_n^-; r_n), G(\CP_n^+; r_n)$ respectively.
    Then, with high probability, $\CP_n^- \subset \CX_n \subset \CP_n^+$ holds and thus
    \begin{equation*}
        \sum_{j=k_n}^\infty W_{j,n}^- \leq \sum_{j=k_n}^{n-1} W_{j,n} \leq \sum_{j=k_n}^\infty W_{j,n}^+
    \end{equation*}
    holds.
    By Lemma \ref{Lemma.03},
    \begin{equation*}
        \sum_{j=k_n}^\infty W_{j,n}^- \to \text{Po}(\beta), \quad E\left[\sum_{j=k_n}^\infty W_{j,n}^+ - \sum_{j=k_n}^\infty W_{j,n}^-\right] \to 0
    \end{equation*}
    and by the constraction of $\CP_n^-, \CP_n^+$, always $\sum_{j=k_n}^\infty W_{j,n}^- \leq \sum_{j=k_n}^\infty W_{j,n}^+$.
    Combining these facts, we obtain $\sum_{j=k_n}^{n-1} W_{j,n} \xrightarrow{d} \text{Po}(\beta)$.
\end{proof}
One more lemma is required to complete the proof of Theorem \ref{MainResult.02}.
\begin{lem}\label{Lemma.04}
    Assume that $k_n = o(\log n)$ and the density $f$ of $X_1$ is bounded.
    Then, with high probability, it holds that $n\theta S_{k_n}(\CX_n)^d \leq o(k_n)$.
\end{lem}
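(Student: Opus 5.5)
The plan is to unwind the statement into a one-sided threshold estimate. I will show that there is a sequence $\varepsilon_n \to 0$ such that, for $r_n$ defined by $n\theta r_n^d = \varepsilon_n k_n$, we have $P[S_{k_n}(\CX_n) > r_n] \to 0$. By the equivalence used in (\ref{equivalence.01}), this amounts to showing that with high probability $G(\CX_n; r_n)$ has a vertex of degree at least $k_n$.

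It suffices to exhibit a cell of diameter at most $r_n$ containing at least $k_n+1$ points, since all points in such a cell are mutually adjacent. I will take $\varepsilon_n := \exp(-\sqrt{\log n / k_n})$, which tends to $0$ because $k_n = o(\log n)$ and satisfies $k_n\log(1/\varepsilon_n) = \sqrt{k_n \log n} = o(\log n)$. If $k_n$ stays bounded, the same choice works, or one can argue directly.

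\emph{Step 1 (good cells).} Since $f$ is only assumed bounded, I first fix $a>0$ and a bounded set $A$ with $\text{Leb}(A)>0$ and $f \geq a$ on $A$. Partition $\BR^d$ into grid cubes $Q$ of side $c r_n$, with $c$ small enough (depending on the norm) that each cube has $\|\cdot\|$-diameter at most $r_n$. By the Lebesgue differentiation theorem along grids (equivalently, martingale convergence for dyadic-type partitions), the cubes with $F(Q) \geq (a/2)\text{Leb}(Q)$ cover a fraction of $A$ tending to one as the mesh shrinks. Hence there are $N \geq c_0 r_n^{-d} \asymp n/(\varepsilon_n k_n)$ disjoint good cubes $Q_1,\dots,Q_N$, each with $p_i := F(Q_i) \geq c_1 r_n^d$. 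This gives $np_i \geq c_2 \varepsilon_n k_n$, and also $np_i \leq f_{\max}\, n\,\text{Leb}(Q_i) \leq c_3\varepsilon_n k_n$.

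\emph{Step 2 (one cell).} For each good cube, I bound
\[
q_i := P[\mathrm{Bi}(n,p_i) \geq k_n+1] \geq \binom{n}{k_n+1}p_i^{k_n+1}(1-p_i)^{n}.
\]
By Stirling's formula, and using $k_n^2 = o(n)$ and $np_i = O(\varepsilon_n k_n)$, this is at least
\[
\exp\bigl(-(k_n+1)\log\bigl((k_n+1)/(e\,np_i)\bigr) - np_i - O(\log k_n)\bigr) \geq \exp\bigl(-(1+o(1))\,k_n\log(1/\varepsilon_n) - O(\log k_n)\bigr).
\]
With the choice of $\varepsilon_n$, the exponent is $o(\log n)$.

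\emph{Step 3 (independence across cells).} The cell counts are multinomial, hence negatively associated, or more simply the events $\{\text{count} \le k_n\}$ are negatively correlated. This gives
\[
P[\text{all } N \text{ cells have} \le k_n \text{ points}] \leq \prod_i (1-q_i) \leq \exp\Bigl(-\sum_i q_i\Bigr) \leq \exp\bigl(-N n^{-o(1)}\bigr).
\]
Since $N \geq n^{1-o(1)}$, this tends to $0$. Alternatively, one can avoid negative association by Poissonizing with $\CP_{n-n^{3/4}} \subset \CX_n$ whp, as in Proposition~\ref{Proposition.02}, since Poisson cell counts are independent and the monotone event is preserved.

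The main obstacle is Step 1: extracting $\gtrsim r_n^{-d}$ cells of mass comparable to their volume using only boundedness and integrability of $f$. Step 2 is a routine but careful Stirling computation, and it is what dictates the slowly vanishing choice of $\varepsilon_n$.
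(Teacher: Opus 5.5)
Your proof is correct, and it takes a different route from the paper. The paper proves the lemma in one line by citing Penrose's subconnective law of large numbers for the maximum degree (Theorem~6.10 there), $\Delta_n=(1+o_P(1))\log n/\log(\log n/(nr_n^d))$. To extract the lemma from that citation one must take $n\theta r_n^d=\varepsilon_n k_n$ with $\log(1/\varepsilon_n)=o(\log n/k_n)$. One then checks that the predicted degree $\log n/(\log(\log n/k_n)+\log(1/\varepsilon_n)+O(1))$ is $\gg k_n$. This is exactly the balance your Step~2 makes explicit with $\varepsilon_n=\exp(-\sqrt{\log n/k_n})$.

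You instead prove directly only the one-sided bound the lemma actually needs. You find a cell of diameter at most $r_n$ holding $k_n+1$ points: there are $n^{1-o(1)}$ cells, each succeeding with probability $n^{-o(1)}$, and you decouple them by negative association or Poissonization. This makes the argument self-contained. It uses nothing about $f$ beyond boundedness, and it treats bounded $k_n$ in the same stroke. It even yields $P[S_{k_n}(\CX_n)>r_n]\le\exp(-n^{1-o(1)})$, hence an almost sure version by Borel--Cantelli. The citation gives two-sided information on $\Delta_n$ that the lemma does not use, but leaves the translation to the threshold radius to the reader.

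Two small points. First, Step~1 is easier than you fear: a pigeonhole argument suffices. Take a ball $B$ with $F(B)\ge 1/2$. The $M\asymp r_n^{-d}$ cubes meeting $B$ with $F(Q)<1/(4M)$ carry mass below $1/4$. So the remaining cubes carry mass at least $1/4$, and since $F(Q)\le f_{\max}\mathrm{Leb}(Q)$, there are $\gtrsim r_n^{-d}$ of them, each with $F(Q)\gtrsim r_n^d$. Your Lebesgue-differentiation argument also works directly for non-nested grids, because cubes containing a Lebesgue point shrink regularly, so the martingale remark is unnecessary. Second, for bounded $k_n$ the prefactor $(1+o(1))$ in Step~2 should be a constant such as $2$, because $(k_n+1)/k_n\not\to 1$. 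This is harmless, since the exponent is still $O(\sqrt{k_n\log n})=o(\log n)$.
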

\begin{proof}
    This follows immediately by limit in probability for maximum degree in subconnective regime (Penrose\cite[Theorem 6.10]{Penrose.2003}).
\end{proof}
\begin{proof}[Proof of Theorem \ref{MainResult.02}]
    Let $x \in \BR$ and $(r_n)_{n \in \BN}$ be a sequence satisfying the relation (\ref{condition.02}) for $\beta := e^{-x}$.
    Then, by Proposition \ref{Proposition.02}, the probability $P[S_{k_n}(\CX_n) > r_n]$ converges to $\exp(-\exp(-x))$ as $n \to \infty$.
    It follows from the choice of $(r_n)_{n \in \BN}$ in (\ref{condition.02}) that
    \begin{equation*}
        P[S_{k_n}(\CX_n) > r_n] = P\left[-\frac{n\theta S_{k_n}(\CX_n)^d}{k_n} < W_0\left(-\frac{1}{e} \left(\frac{n}{\beta \sqrt{2\pi k_n}}\right)^{-1/k_n}\right)\right].
    \end{equation*}
    By Lemma \ref{Lemma.04}, $-n\theta S_{k_n}(\CX_n)^d/k_n$ lies in the range of $W_0$ with high probability.
    Since $W_0$ is the inverse function of $te^t ~ (t \geq -1)$ which is strictly increasing, the above equals
    \begin{equation*}
        P\left[-\frac{n\theta S_{k_n}(\CX_n)^d}{k_n} \exp\left(-\frac{n\theta S_{k_n}(\CX_n)^d}{k_n}\right) < -\frac{n^{-1/k_n}}{e} (\beta \sqrt{2\pi k_n})^{1/k_n}\right] + o(1).
    \end{equation*}
    Note that
    \begin{equation}
        \beta = \exp(-(x-\log \sqrt{2\pi k_n}))/\sqrt{2\pi k_n} = \frac{1}{\sqrt{2\pi k_n}} \left(1-\frac{x-\log \sqrt{2\pi k_n}+\varepsilon_n}{k_n}\right)^{k_n} \label{beta}
    \end{equation}
    for some $\varepsilon_n \to 0$ since $(\log \sqrt{2\pi k_n})^2/k_n \to 0$.
    Therefore, the probability $P[S_{k_n}(\CX_n) > r_n]$ equals
    \begin{equation*}
        \begin{aligned}
            &P\left[-\frac{n\theta S_{k_n}(\CX_n)^d}{k_n} \exp\left(-\frac{n\theta S_{k_n}(\CX_n)^d}{k_n}\right) < -\frac{n^{-1/k_n}}{e} \left(1-\frac{x-\log \sqrt{2\pi k_n} + \varepsilon_n}{k_n}\right)\right] + o(1)\\
            &= P\left[-en ^{1+1/k_n}\theta S_{k_n}(\CX_n)^d \exp\left(-\frac{n\theta S_{k_n}(\CX_n)^d}{k_n}\right) + k_n + \log \sqrt{2\pi k_n} + \varepsilon_n < x\right] + o(1).
        \end{aligned}
    \end{equation*}
    In the above, $\varepsilon_n$ is deterministic and thus it converges to $0$ in probability.
    Therefore, it holds that
    \begin{equation*}
        P\left[-en ^{1+1/k_n}\theta S_{k_n}(\CX_n)^d \exp\left(-\frac{n\theta S_{k_n}(\CX_n)^d}{k_n}\right) + k_n + \log \sqrt{2\pi k_n} < x\right] \xrightarrow{n \to \infty} \exp(-\exp(-x)).
    \end{equation*}
    by Slutsky's theorem.
\end{proof}
\begin{rem}\label{Remark.02}
    To prove the assertion made in Remark~\ref{Remark.01}, note that
    \begin{equation*}
        \frac{n^{1+1/k_n}\theta r_n^d}{k_n} = -n^{1/k_n} W_0 \left(-\frac{1}{e} \left(\frac{n}{\beta \sqrt{2\pi k_n}}\right)^{-1/k_n}\right) = \frac{1}{e} (\beta \sqrt{2\pi k_n})^{1/k_n} + O(n^{-1/k_n})
    \end{equation*}
    from the Taylor expansion of $W_0$.
    Then, the equation (\ref{beta}) and the assumption $k_n n^{-1/k_n} \to 0$ imply
    \begin{equation*}
        -e n^{1+1/k_n} \theta r_n^d = - k_n + (x - \log \sqrt{2\pi k_n} + o(1)) + o(1) = x - (k_n + \log \sqrt{2\pi k_n} + \varepsilon_n')
    \end{equation*}
    for some $\varepsilon_n' \to 0$.
    From Lemma~\ref{Lemma.04}, it holds that
    \begin{equation*}
        \begin{aligned}
            &P[-en^{1+1/k_n} \theta S_{k_n}(\CX_n)^d + k_n + \log \sqrt{2\pi k_n} + \varepsilon_n' < x]\\
            &= P[-en^{1+1/k_n} \theta S_{k_n}(\CX_n)^d < -en^{1+1/k_n} \theta r_n^d]\\
            &= P[S_{k_n}(\CX_n) > r_n]\\
            &\xrightarrow{n \to \infty} \exp(-\exp(-x)).
        \end{aligned}
    \end{equation*}
    This, together with Slutsky's theorem, establishes the assertion.
\end{rem}

From now on, let us consider the case where the density $f$ of $X_1$ is as in Theorem \ref{MainResult.03}.
As in the previous case, we will find a sequence $(r_n)_{n \in \BN}$ satisfying
\begin{equation*}
    n\theta r_n^d = o(k_n), \quad E[W_{k_n,n}'] \to \beta
\end{equation*}
for some $\beta \in (0,\infty)$.
In this case,
\begin{align*}
    E[W_{k_n,n}'] &= n\int_{\BR^d} \frac{(nF(B(x;r_n)))^{k_n}}{k_n!} e^{-nF(B(x;r_n))} f(x)\,dx\\
    &= n\frac{(n\theta r_n^d f_{\max})^{k_n}}{k_n!} e^{-n\theta r_n^d f_{\max}} \int_{\BR^d} \left(\frac{F(B(x;r_n))}{\theta r_n^d f_{\max}}\right)^{k_n} \exp(n\theta r_n^d f_{\max} - nF(B(x;r_n))) f(x)\,dx.
\end{align*}
Let us now estimate the integral term.
Let $\rho_0 > \rho_1 > 0$ and $\varepsilon_0 > 0$ be as in the conditions of $f$, then
\begin{align*}
    &\int_{\BR^d \setminus B(\bm{0}; \rho_1+r_n)} \left(\frac{F(B(x;r_n))}{\theta r_n^d f_{\max}}\right)^{k_n} \exp(n\theta r_n^d f_{\max} - nF(B(x;r_n))) f(x)\,dx\\
    &\leq \int_{\BR^d \setminus B(\bm{0}; \rho_1+r_n)} (1-\varepsilon_0)^{k_n} \exp(\varepsilon_0 n\theta r_n^d f_{\max}) f(x)\,dx\\
    &\leq \exp(-ck_n)
\end{align*}
holds since $t^{k_n} e^{-t}$ is increasing on $[0,k_n]$ and $n\theta r_n^d = o(k_n)$.
On the other hand, since $\rho_1+r_n \leq (\rho_0 + \rho_1)/2$ for all large enough $n$ and $f$ is Lipschitz on $B(\bm{0}; (\rho_0 + \rho_1)/2)$, it holds that
\begin{align*}
    &\int_{B(\bm{0}; \rho_1+r_n)} \left(\frac{F(B(x;r_n))}{\theta r_n^d f_{\max}}\right)^{k_n} \exp(n\theta r_n^d f_{\max} - nF(B(x;r_n))) f(x)\,dx\\
    &= \int_{B(\bm{0}; \rho_1+r_n)} \left(\frac{f(x)}{f_{\max}}\right)^{k_n}(1+O(r_n))^{k_n} \exp(n\theta r_n^d f_{\max} - n\theta r_n^d f(x)(1+O(r_n))) f(x)\,dx\\
    &\sim \int_{B(\bm{0}; \rho_1+r_n)} \left(\frac{f(x)}{f_{\max}}\right)^{k_n} \exp(n\theta r_n^d f_{\max} - n\theta r_n^d f(x)) f(x)\,dx,
\end{align*}
where we used the facts $k_n r_n \to 0$ and $n\theta r_n^d \times r_n \to 0$ which follows from the assumptions $n\theta r_n^d = o(k_n)$ and $k_n = o(\log n)$.
By the Taylor expansion, it equals
\begin{equation*}
    \int_{B(\bm{0}; \rho_1+r_n)} \left(1+\frac{\partial^{2m}f(\xi_x x)[x]}{(2m)! f_{\max}}\right)^{k_n} \exp(-n\theta r_n^d \partial^{2m}f(\xi_x x)[x]/((2m)!)) f(x)\,dx
\end{equation*}
where $\xi_x \in (0,1)$.
By the change of variables $x = k_n^{-1/(2m)} z$, the above equals to
\begin{equation}\label{equation.01}
    \begin{aligned}
        &k_n^{-d/(2m)} \int_{B(\bm{0}; k_n^{1/(2m)}(\rho_1+r_n))} \left(1+\frac{\partial^{2m}f(\xi_z' k_n^{-1/(2m)} z)[z]}{(2m)! f_{\max} k_n}\right)^{k_n}\\
        &\quad \times \exp\left(-\frac{n\theta r_n^d}{(2m)! k_n} \partial^{2m}f(\xi_z' k_n^{-1/(2m)} z)[z]\right) f(k_n^{-1/(2m)} z)\,dz,
    \end{aligned}
\end{equation}
where $\xi_z' := \xi_{k_n^{-1/(2m)}z}$.
For all large enough $n$, the integrand is dominated by
\begin{equation*}
    f_{\max} \exp(\partial^{2m}f(\xi_z' k_n^{-1/(2m)} z)[z]/(2 ((2m)!) f_{\max}))
\end{equation*}
since $n\theta r_n^d = o(k_n)$, and asymptotic to
\begin{equation*}
    \exp(\partial^{2m}f(\bm{0})[z]/((2m)! f_{\max})) f_{\max}
\end{equation*}
since $f$ is $C^{2m}$ on some neighborhood of $\bm{0}$.
By continuity of the derivatives of $f$, for some $c > 0$,
\begin{equation*}
    \partial^{2m}f(\xi_z' k_n^{-1/(2m)} z)[z]/(2 ((2m)!) f_{\max}) \leq -c\|z\|_2^{2m}
\end{equation*}
holds for all large enough $n$.
Thus, (\ref{equation.01}) is asymptotic to
\begin{align*}
    &k_n^{-d/(2m)} \int_{\BR^d} \exp(\partial^{2m}f(\bm{0})[z]/((2m)! f_{\max})) f_{\max}\,dz\\
    &= k_n^{-d/(2m)} \int_{\BR^d} \exp(\partial^{2m}f(\bm{0})[z]/((2m)!)) \,dz \times f_{\max}^{1+d/(2m)}
\end{align*}
by the dominated convergence theorem.
Combining all the above, we obtain
\begin{equation*}
    E[W_{k_n,n}'] \sim \gamma f_{\max}^{1+d/(2m)} n\frac{(n\theta r_n^d f_{\max})^{k_n}}{k_n!k_n^{d/(2m)}} e^{-n\theta r_n^d f_{\max}},
\end{equation*}
where $\gamma := \int_{\BR^d} \exp(\partial^{2m}f(\bm{0})[z]/((2m)!)) \,dz$.
To ensure that this converges to $\beta$, by Stirling's formula, it suffices to choose
\begin{equation*}
    \frac{n\theta r_n^d f_{\max}}{k_n} = -W_0\left(-\frac{1}{e}\left(\frac{n \gamma f_{\max}^{1+d/(2m)}}{\beta \sqrt{2\pi k_n^{1+(d/m)}}}\right)^{-1/k_n}\right)
\end{equation*}
as in the uniform case.
The rest of the proof is quite similar to that of Theorem~\ref{MainResult.02}.
We describe only the necessary modifications to Lemma 5.2.
We split the integral $I_2(n)$ into two parts:
\begin{equation*}
    \begin{aligned}
        I_2(n) &= \lambda(n)^2 \int_{B(\bm{0}; \rho_1 + 3r_n)} F(dx) \int_{B(x; 3r_n)} F(dy) P[\{\CP_{\lambda(n)}^y(B_x^n) = k_n\} \cap \{\CP_{\lambda(n)}^x(B_y^n) = k_n\}]\\
        &\quad + \lambda(n)^2 \int_{\BR^d \setminus B(\bm{0}; \rho_1 + 3r_n)} F(dx) \int_{B(x; 3r_n)} F(dy) P[\{\CP_{\lambda(n)}^y(B_x^n) = k_n\} \cap \{\CP_{\lambda(n)}^x(B_y^n) = k_n\}].
    \end{aligned}
\end{equation*}
Note that we can take $\rho_0$ such that $f$ is bounded away from $0$ on $B(\bm{0}; \rho_0)$.
Since $f(x) \leq (1-\varepsilon) f_{\max}$ on $B(\bm{0};\rho_1)$, the second term in the above is bounded by
\begin{equation*}
    \lambda(n)^2 r_n^d \frac{k_n}{\lambda(n) \theta r_n^d} \frac{(\lambda(n) \theta r_n^d f_{\max})^{k_n}}{k_n!} e^{-n\theta r_n^d f_{\max}} (1-\varepsilon_0/2)^{k_n} \leq k_n^{1+d/(2m)} \exp(-ck_n)
\end{equation*}
and this vanishes as $n \to \infty$.
We can verify that the first term tends to $0$ in the same way as in the proof of Lemma \ref{Lemma.03}.
On the other hand, it holds that
\begin{equation*}
    I_1(n) \leq \left(\lambda(n)\frac{(\lambda(n)\theta r_n^d f_{\max})^{k_n}}{k_n!}\right)^2 r_n^d \leq ck_n^{2+(d/m)} r_n^d \to 0,
\end{equation*}
since $r_n$ decays faster than any power of $k_n^{-1}$.
\begin{proof}[Proof of Corollary~\ref{Corollary.01}]
    Let $\varepsilon > 0$.
    Then, in the uniform case,
    \begin{align*}
        &P\left[\left|\frac{n^{1+1/k_n}\theta S_{k_n}(\CX_n)^d}{k_n} - \frac{1}{e}\right| \geq \varepsilon\right]\\
        &\leq P\left[\left|-en^{1+1/k_n}\theta S_{k_n}(\CX_n)^d \exp\left(-\frac{n\theta S_{k_n}(\CX_n)^d}{k_n}\right) + k_n + \log \sqrt{2\pi k_n}\right| \geq k_n e\varepsilon/2\right] + o(1)\\
        &\leq P\left[\left|-en^{1+1/k_n}\theta S_{k_n}(\CX_n)^d \exp\left(-\frac{n\theta S_{k_n}(\CX_n)^d}{k_n}\right) + k_n + \log \sqrt{2\pi k_n}\right| \geq M\right] + o(1)
    \end{align*}
    holds for all sufficiently large $n$ and all $0 < M \leq k_n e\varepsilon/2$.
    Thus, by Theorem 2.2 and the Portmanteau theorem, we have
    \begin{equation*}
        \limsup_{n \to \infty} P\left[\left|\frac{n^{1+1/k_n}\theta S_{k_n}(\CX_n)^d}{k_n} - \frac{1}{e}\right| \geq \varepsilon\right] \leq P[|Z| \geq M].
    \end{equation*}
    Since this holds for any $M > 0$, the left-hand side must be zero.
    The proof for the non-uniform case is almost identical.
\end{proof}

\subsection{Point configuration}
Unlike the case where $k_n$ is fixed, we first prove the result for the Poissonized version and then approximate the binomial version.
The following is a refinement of Lemma \ref{Lemma.02}, adapted for proving the convergence of point processes.
\begin{prop}\label{Proposition.05}
    Let $(A_n)_{n \in \BN}$ be a sequence of subsets of $\BZ_{\geq 0}$, and $(r_n)_{n \in \BN}, (\lambda(n))_{n \in \BN}$ be a sequence of positive numbers.
    For $C \in \CC$, let
    \begin{equation*}
        W_{A_n,\lambda(n)}'(C) = \sum_{Y \in \CP_{\lambda(n)}} \mathbf{1}\{\CP_{\lambda(n)}(B(Y;r_n) \setminus \{Y\}) \in A_n\} \cap \{Y \in C\}.
    \end{equation*}
    Define
    \begin{equation*}
        \begin{aligned}
            &I_{1,C}(n) := \lambda(n)^2 \int_C F(dx) \int_{B(x;3r_n) \cap C} F(dy) P[\CP_{\lambda(n)}(B_x^n) \in A_n] P[\CP_{\lambda(n)}(B_y^n) \in A_n],\\
            &I_{2,C}(n) := \lambda(n)^2 \int_C F(dx) \int_{B(x;3r_n) \cap C} F(dy) P[\{\CP_{\lambda(n)}^y(B_x^n) \in A_n\} \cap \{\CP_{\lambda(n)}^x(B_y^n) \in A_n\}],
        \end{aligned}
    \end{equation*}
    where $B_x^n := B(x;r_n)$ and $\CP_{\lambda(n)}^x := \CP_{\lambda(n)} + \delta_x$.
    Then,
    \begin{equation*}
        d_\text{TV}(W_{A_n,\lambda(n)}'(C), \text{Po}(E[W_{A_n,\lambda(n)}'(C)])) \leq \min\left(3, \frac{1}{E[W_{A_n,\lambda(n)}'(C)]}\right) (I_{1,C}(n) + I_{2,C}(n))
    \end{equation*}
    holds.
\end{prop}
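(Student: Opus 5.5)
This is a localized version of Penrose's Theorem~6.7, so I would follow his proof: Stein's method for Poisson approximation, in the form suited to Poisson point processes (the Barbour--Holst--Janson bound via Palm couplings, or equivalently Penrose's Theorem~2.1 for dissociated families). The route avoiding new machinery is to discretize and then invoke Lemma~\ref{Lemma.05} with $\ell = 1$, in the form $d_{\text{TV}} \le \min(3, 1/E[W])(b_1+b_2)$.

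First, partition $\BR^d$ into cubes $Q_{i,\delta}$ of side $\delta \ll r_n$ and form the events
\[
\xi_i := \mathbf{1}\{\CP_{\lambda(n)}(Q_{i,\delta}\cap C)=1,\ \CP_{\lambda(n)}((B(Y;r_n)\setminus\{Y\})) \in A_n \text{ for the point } Y \in Q_{i,\delta}\cap C\}.
\]
Then $W_\delta := \sum_i \xi_i$ is a sum of Bernoulli variables. Because $\CP_{\lambda(n)}$ has independent increments, $\xi_i$ depends only on the restriction of $\CP_{\lambda(n)}$ to the $(r_n+\sqrt d\,\delta)$-neighborhood of $Q_{i,\delta}$. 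Declaring $i \sim j$ whenever the cubes are within distance $2r_n + 2\sqrt d\,\delta < 3r_n$ therefore gives a dependency graph. As $\delta \to 0$, we have $W_\delta \to W'_{A_n,\lambda(n)}(C)$ almost surely, since the process is simple and, on bounded sets, a.s.\ has no two points in one small cube. Restricting to $C$ bounded is convenient, and $E[W]<\infty$. Hence $d_{\text{TV}}(W_\delta, W) \to 0$ and $E[W_\delta] \to E[W]$, so it is enough to bound the Lemma~\ref{Lemma.05} terms for $W_\delta$ and then pass to the limit.

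The second step evaluates $b_1 = \sum_i\sum_{j\in\CN_i} p_ip_j$ and $b_2 = \sum_i\sum_{j\in\CN_i\setminus\{i\}} p_{ij}$ as $\delta\to0$. Here $p_i = \lambda(n)F(Q_i\cap C)\,P[\CP_{\lambda(n)}(B_{x}^n)\in A_n] + o(\delta^d)$ for suitable $x \in Q_i$. This approximation is exactly where the remark after Lemma~\ref{Lemma.02} comes in: $f$ need not be continuous, so instead of a Riemann-sum argument I would write $p_i$ exactly as an integral over $Q_i\cap C$ of $\lambda(n) f(x) g_n(x)$ with $g_n(x)=P[\CP_{\lambda(n)}(B_x^n)\in A_n]$, up to an $O(\delta^{2d})$ error. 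By the Mecke formula, the same holds for $p_{ij}$ with the Palm expression $\lambda(n)^2\int\int P[\CP^{y}(B_x^n)\in A_n,\ \CP^x(B_y^n)\in A_n]\,F(dx)F(dy)$.

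Summing over $i$ and over $j\in\CN_i$, the region $\bigcup_{j\in\CN_i}Q_j$ shrinks to $B(x;3r_n)$ up to a boundary layer of width $O(\delta)$. Lebesgue density and dominated convergence (all integrands are at most $\lambda(n)^2 f_{\max}^2$ on a bounded set) then give $b_1 \to I_{1,C}(n)$ and $b_2 \to I_{2,C}(n)$. One could even choose the dependency neighborhoods with radius slightly above $2r_n$ to get an inequality directly.

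The main obstacle is controlling the diagonal contributions and the $\delta$-errors carefully. Specifically, one must show that the events with two or more points in a cube, and the dependence of $\xi_i$ on the exact position of its point within the cube, contribute only $o(1)$ as $\delta\to0$ for fixed $n$. I would handle this by bounding $P[\CP(Q_i)\ge 2]\le (\lambda f_{\max}\delta^d)^2$, summed over $O(\delta^{-d})$ cubes. Finally, letting $\delta\to0$ in $d_{\text{TV}}(W_\delta,\text{Po}(E W_\delta))\le \min(3,1/EW_\delta)(b_1+b_2)$ and using continuity of $\lambda\mapsto\text{Po}(\lambda)$ in total variation yields the stated bound.
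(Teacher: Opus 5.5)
Your proposal is correct and is essentially the paper's proof: discretize $C$ into small cubes, apply Lemma~\ref{Lemma.05} with $\ell=1$ (in its univariate $\min(3,1/E[W])$ form) to the cube indicators with a distance-based dependency graph, then let the mesh tend to $0$ using dominated convergence. The differences are minor. You anchor each indicator at the actual point in the cube rather than at the cube center, so the Mecke formula gives $p_i$ and $p_{ij}$ as exact integrals up to $O(\delta^{2d})$ errors. Also, with your dependency radius of about $2r_n$, the limiting integrals are over $B(x;2r_n)$ rather than $B(x;3r_n)$, so your claim $b_1\to I_{1,C}(n)$ should read $\limsup b_1\le I_{1,C}(n)$; this still gives the stated bound because the integrands are nonnegative.
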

\begin{proof}
    For a given $m \in \BN$, we partition $\BR^d$ into cubes with side length $m^{-1}$.
    Then, let $\{H_{m,i}\}_{i=1}^{\ell}$ denote the collection of all such cubes that intersect $C$, and let $\{a_{m,i}\}_{i=1}^{\ell}$ be their corresponding centers.
    The labels are assigned according to the lexicographic ordering of the centers.
    Since $C$ is bounded, these are finite sets.
    For each $m,i$, define
    \begin{equation*}
        \xi_{m,i} := \mathbf{1}\{\CP_{\lambda(n)}(H_{m,i} \cap C) = 1\} \cap \{\CP_{\lambda(n)}(B_{a_{m,i}}^n \setminus H_{m,i}) \in A_n\}
    \end{equation*}
    and, set $p_{m,i} := E[\xi_{m,i}]$ and $p_{m,i,j} := E[\xi_{m,i}\xi_{m,j}]$.

    Let $I_m := \{1, 2, \cdots, \ell\}$ and define an adjacency relation on $I_m$ by
    \begin{equation*}
        i \sim_m j \iff 0 < \|a_{m,i} - a_{m,j}\| \leq 3r_n.
    \end{equation*}
    Then, if $m$ is sufficiently large, $(I_m, \sim_m)$ is a dependency graph for $(\xi_{m,i})_{1 \leq i \leq \ell}$ since the Poisson point process has the spatial independence property.
    Let us define $\widetilde{W}_m := \sum_{i \in I_m} \xi_{m,i}$.
    Then, $\lim_{m \to \infty} \widetilde{W}_m = W_{A_n,\lambda(n)}'(C)$ and by Lemma \ref{Lemma.05}, it holds that
    \begin{equation*}
        d_{\text{TV}}(\widetilde{W}_m, \text{Po}(E[\widetilde{W}_m])) \leq \min \left(3,\frac{1}{E[\widetilde{W}_m]}\right)\left(\sum_{i \in I_m} \sum_{j \in \CN_{m,i} \setminus \{i\}} p_{m,i,j} + \sum_{i \in I_m} \sum_{j \in \CN_{m,i}} p_{m,i}p_{m,j}\right),
    \end{equation*}
    where $\CN_{m,i} := \{j \in I_m : j \sim i\} \cup \{i\}$.
    Let us define the function $w_m$ by $w_m(x) := m^d p_{m,i}$ if $x \in H_{m,i}$ and $w_m(x) := 0$ if $x \notin H_{m,i}$ for all $i \in I_m$.
    Since $f \mathbf{1}_C$ is integrable, 
    \begin{equation*}
        \begin{aligned}
            w_m(x) &= m^d P[\{\CP_{\lambda(n)}(H_{m,i} \cap C) = 1\} \cap \{\CP_{\lambda(n)}(B_{a_{m,i}}^n \setminus H_{m,i}) \in A_n\}]\\
            &= \lambda(n) m^d \int_{H_{m,i}} f(y) \mathbf{1}_C(y) \,dy \times e^{O(m^{-d})} P[\CP_{\lambda(n)}(B_{a_{m,i}}^n \setminus H_{m,i}) \in A_n]\\
            &\xrightarrow{m \to \infty} \lambda(n) f(x) \mathbf{1}_C(x) P[\CP_{\lambda(n)}(B_x^n) \in A_n], \quad \text{a.s.}
        \end{aligned}
    \end{equation*}
    holds.
    Also,
    \begin{equation*}
        w_m(x) \leq m^d P[\CP_{\lambda(n)}(H_{m,i}) = 1] \leq \lambda(n) f_{\max}
    \end{equation*}
    and $w_m(x) = 0$ outside of some bounded set.
    Thus, by the dominated convergence theorem and Palm theory, $E[\widetilde{W}_m] \xrightarrow{m \to \infty} \lambda(n) \int_C P[\CP_{\lambda(n)}(B_x^n) \in A_n] f(x) \,dx = E[W_{A_n,\lambda(n)}']$.

    Similarly, define $u_m, v_m$ by
    \begin{equation*}
        u_m(x,y) := m^{2d} p_{m,i} p_{m,j} \mathbf{1}\{j \in \CN_{m,i}\}, \quad v_m(x,y) := m^{2d} p_{m,i,j} \mathbf{1}\{j \in \CN_{m,i} \setminus \{i\}\}
    \end{equation*}
    if $x \in H_{m,i}, y \in H_{m,j}$.
    By similar arguments, it holds that
    \begin{equation*}
        \begin{aligned}
            &u_m(x,y) \to \lambda(n)^2 f(x)\mathbf{1}_C(x) f(y)\mathbf{1}_C(y) P[\CP_{\lambda(n)}(B_x^n) \in A_n]P[\CP_{\lambda(n)}(B_y^n) \in A_n] \mathbf{1}_{B(x,3r_n)}(y),\\
            &v_m(x,y) \to \lambda(n)^2 f(x)\mathbf{1}_C(x) f(y)\mathbf{1}_C(y) P[\{\CP_{\lambda(n)}^y(B_x^n) \in A_n\} \cap \{\CP_{\lambda(n)}^x(B_y^n) \in A_n\}] \mathbf{1}_{B(x,3r_n)}(y)
        \end{aligned}
    \end{equation*}
    almost everywhere and
    \begin{equation*}
        \sum_{i \in I_m} \sum_{j \in \CN_{m,i}} p_{m,i}p_{m,j} \to I_{1,C}(n), \quad \sum_{i \in I_m} \sum_{j \in \CN_{m,i} \setminus \{i\}} p_{m,i,j} \to I_{2,C}(n).
    \end{equation*}
    This completes the proof.
\end{proof}
\begin{proof}[Proof of Theorem \ref{MainResult.07}]
    Note that in the both cases, the limiting point process $\CQ$ is simple.
    By Kallenberg's result~\cite[Theorem~16.16]{Kallenberg.2002}, $\Phi_{k_n,n} \xrightarrow{d} \CQ$ if and only if for all $C \in \CC_{\CQ}$, $\Phi_{k_n,n}(C) \xrightarrow{d} \CQ(C)$.

    Firstly, let us consider the uniform case.
    Let $C \in \CC_{\CQ}$.
    By an argument similar to that in the proof of Lemma~\ref{Lemma.03}, we obtain $d_{\text{TV}}(W_{k_n,\lambda(n)}'(C), \text{Po}(E[W_{k_n,\lambda(n)}'(C)])) \to 0$.
    By the Palm theory,
    \begin{equation*}
        \begin{aligned}
            E[W_{k_n,\lambda(n)}'(C)] &= \lambda(n) \int_{C \cap [-1/2,1/2]^d} \frac{(\lambda(n)F(B(x;r_n)))^{k_n}}{k_n!} e^{-\lambda(n)F(B(x;r_n))} \,dx\\
            &\sim n\frac{(\lambda(n)\theta r_n^d)^{k_n}}{k_n!} e^{-\lambda(n)\theta r_n^d} \times \text{Leb}(C \cap [-1/2,1/2]^d)\\
            &\sim \beta \text{Leb}(C \cap [-1/2,1/2]^d)
        \end{aligned}
    \end{equation*}
    holds.
    Therefore, $W_{k_n,\lambda(n)}'(C) \xrightarrow{d} \text{Po}(\beta \text{Leb}(C \cap [-1/2,1/2]^d)) = \CQ(C)$ and as in the proof of Proposition~\ref{Proposition.02}, it also holds that $\Phi_{k_n,n}(C) \xrightarrow{d} \CQ(C)$.
    This completes the proof for the uniform case.

    Next, we turn our attention to the non-uniform case.
    Let $C \in \CC_{\CQ}$ and we consider $\Phi_{k_n-1,n}(k_n^{-1/(2m)}C)$.
    Then,
    \begin{equation*}
        \begin{aligned}
            &E[W_{k_n,\lambda(n)}'(k_n^{-1/(2m)}C)]\\
            &\sim n \int_{k_n^{-1/(2m)}C} \frac{(n\theta r_n^d f(x))^{k_n}}{k_n!} e^{-n\theta r_n^d f(x)} f(x) \,dx\\
            &\sim n \frac{(n\theta r_n^d f_{\max})^{k_n}}{k_n! k_n^{d/(2m)}} e^{-n\theta r_n^d f_{\max}} \int_C \left(1+\frac{\partial^{2m}f(\bm{0})[z]}{(2m)! f_{\max} k_n}\right)^{k_n} \exp(n\theta r_n^d (f(k_n^{-1/(2m)}z)-f_{\max})) f_{\max} \,dz\\
            &\sim \beta \gamma^{-1} \int_C \exp(\partial^{2m}f(\bm{0})[z]/((2m)!)) \,dz = \Lambda(C).
        \end{aligned}
    \end{equation*}
    Thus, it follows that $W_{k_n,n}(k_n^{-1/(2m)}C) \xrightarrow{d} \CQ(C)$ and $D_{k_n^{1/(2m)}} \Phi_{k_n,n}(C) \xrightarrow{d} \CQ(C)$ as in the threshold radius case. 
    This completes the proof for the non-uniform case.
\end{proof}
\begin{proof}[Proof of Theorem~\ref{MainResult.08}]
    In this case, the limiting point process is simple.
    We aim to show that for all $C \in \CC_{\CH}$, $D_{(k_n^{1+d/(2m)}/(n\theta r_n^d))^{1/d}} \Phi_{k_n-1,n}(C) \xrightarrow{d} \CH(C)$.
    Note that
    \begin{equation*}
        D_{(k_n^{1+d/(2m)}/(n\theta r_n^d))^{1/d}} \Phi_{k_n-1,n}(C) = \Phi_{k_n-1,n}((n\theta r_n^d/k_n^{1+d/(2m)})^{1/d}C).
    \end{equation*}
    We now consider $W_{k_n-1,\lambda(n)}'((n\theta r_n^d/k_n^{1+d/(2m)})^{1/d}C)$, then its expectation is asymptotic to
    \begin{equation*}
        \begin{aligned}
            &n \int_{(n\theta r_n^d/k_n^{1+d/(2m)})^{1/d}C} \frac{(n\theta r_n^d f(x))^{k_n-1}}{(k_n-1)!} e^{-n\theta r_n^d f(x)} f(x) \,dx\\
            &\sim n\frac{n\theta r_n^d}{k_n! k_n^{d/(2m)}} \int_C (n\theta r_n^d f((n\theta r_n^d/k_n^{1+d/(2m)})^{1/d}z))^{k_n-1} \exp(-n\theta r_n^d f((n\theta r_n^d/k_n^{1+d/(2m)})^{1/d}z)) f_{\max} \,dz\\
            &\sim \gamma f_{\max}^{1+d/(2m)} n\frac{(n\theta r_n^d f_{\max})^{k_n}}{k_n! k_n^{d/(2m)}} e^{-n\theta r_n^d f_{\max}} \times \text{Leb}(C)/f_{\max}\\
            &\sim \frac{\beta}{f_{\max}} \text{Leb}(C)
        \end{aligned}
    \end{equation*}
    since $f$ is $2m$-H\"{o}lder continuous at $\bm{0}$.
    In the uniform case, we defined $\gamma := 1$.
    In a similar manner to the case of the threshold radius, it can be shown that the Poisson approximation holds.
    Therefore, 
    \begin{equation*}
        W_{k_n-1,\lambda(n)}'((n\theta r_n^d/k_n^{1+d/(2m)})^{1/d}C) \xrightarrow{d} \text{Po}((\beta/f_{\max})\text{Leb}(C))
    \end{equation*}
    holds and by an argument similar to Proposition~\ref{Proposition.02}, it also follows that
    \begin{equation*}
        \Phi_{k_n-1,n}((n\theta r_n^d/k_n^{1+d/(2m)})^{1/d}C) \xrightarrow{d} \text{Po}((\beta/f_{\max})\text{Leb}(C)).
    \end{equation*}
    This completes the proof.
\end{proof}

\appendix
\section{Appendix: Tail estimates and an expectation formula}
We summarize several fundamental tail estimates and expectation formulas for functionals defined on point configurations that are frequently employed in the study of random geometric graphs.
For their proofs, we refer the reader to Penrose~\cite[Section~1]{Penrose.2003}.
We define the function $H : [0,\infty) \to [0,\infty)$ by
\begin{equation*}
    H(t) := 1-t+t\log t \quad (t \in (0,\infty))
\end{equation*}
and $H(0) = 1$ with the convention $0 \log 0 = 0$.
$H(1) = 0$ is the unique zero of $H$, and it holds that $H(t) \geq t$ for all $t \geq 7$.
\begin{lem}
    Assume that $n, k \in \BN$ and $p \in (0,1)$.
    If $k \geq np$, then
    \begin{equation*}
        P[\text{Bi}(n,p) \geq k] \leq \exp\left(-np H\left(\frac{k}{np}\right)\right)
    \end{equation*}
    holds and if $k \leq np$,
    \begin{equation*}
        P[\text{Bi}(n,p) \leq k] \leq \exp\left(-np H\left(\frac{k}{np}\right)\right)
    \end{equation*}
    holds.
\end{lem}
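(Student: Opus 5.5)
The plan is to prove both bounds by the exponential Markov (Chernoff) method and then optimize over the free parameter; the second bound will follow from the first by symmetry. For $\lambda>0$, Markov's inequality applied to $e^{\lambda \text{Bi}(n,p)}$ gives
\begin{equation*}
    P[\text{Bi}(n,p) \geq k] \leq e^{-\lambda k} E\left[e^{\lambda \text{Bi}(n,p)}\right] = e^{-\lambda k}(1-p+pe^{\lambda})^n.
\end{equation*}
The first step is to linearize the moment generating function using $1+u \leq e^u$ with $u = p(e^\lambda-1)$. This yields $(1-p+pe^\lambda)^n \leq \exp(np(e^\lambda-1))$, which is exactly the Poisson moment generating function with mean $np$. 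The bound then reads $\exp(-\lambda k + np(e^\lambda-1))$.

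The second step is to optimize in $\lambda$. The minimizer is $e^\lambda = k/(np)$, and the hypothesis $k \geq np$ guarantees that this choice has $\lambda \geq 0$. Substituting it, the exponent becomes
\begin{equation*}
    -k\log\frac{k}{np} + k - np = -np\left(1 - \frac{k}{np} + \frac{k}{np}\log\frac{k}{np}\right) = -np\,H\!\left(\frac{k}{np}\right).
\end{equation*}
This proves the upper tail bound. The degenerate case $k=np$ is trivial, since $H(1)=0$ and the bound is then $1$.

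For the lower tail we apply the same argument to $-\text{Bi}(n,p)$. With $\lambda>0$,
\begin{equation*}
    P[\text{Bi}(n,p)\le k] \le e^{\lambda k}(1-p+pe^{-\lambda})^n \le \exp\left(\lambda k + np(e^{-\lambda}-1)\right).
\end{equation*}
We optimize with $e^{-\lambda}=k/(np)$, which is at most $1$ because $k \le np$. This gives the same exponent $-npH(k/(np))$. If $k=0$, we let $\lambda\to\infty$ and obtain the bound $e^{-np}=\exp(-npH(0))$, consistent with the convention $H(0)=1$. Since $k\in\BN$, this case can only arise if $0$ is admitted, and in that case $P[\text{Bi}(n,p)=0]=(1-p)^n\le e^{-np}$ directly.

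There is no real obstacle here. The only points needing care are checking the sign of the optimal $\lambda$ in each case using the hypotheses $k\ge np$ and $k\le np$ respectively, and the boundary case $k=0$.
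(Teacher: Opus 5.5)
Your proof is correct and is essentially the argument the paper relies on: the paper does not prove this lemma itself but refers to Penrose's Section~1, where it is proved by the same Chernoff method, i.e.\ Markov's inequality for the generating function $E[z^{\text{Bi}(n,p)}]=(1-p+pz)^n\le e^{np(z-1)}$ with the choice $z=k/(np)$ (your $z=e^{\lambda}$). Your treatment of the boundary cases $k=np$ and $k=0$ is also fine.
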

\begin{lem}
    Assume that $\lambda \in (0,\infty)$ and $k \in \BN$.
    Then, if $k \geq \lambda$,
    \begin{equation*}
        P[\text{Po}(\lambda) \geq k] \leq \exp\left(-\lambda H\left(\frac{k}{\lambda}\right)\right)
    \end{equation*}
    holds and if $k \leq \lambda$,
    \begin{equation*}
        P[\text{Po}(\lambda) \leq k] \leq \exp\left(-\lambda H\left(\frac{k}{\lambda}\right)\right)
    \end{equation*}
    holds.
\end{lem}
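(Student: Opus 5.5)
We prove both inequalities with the exponential Chernoff bound, using the explicit moment generating function of the Poisson law, $E[e^{s\,\text{Po}(\lambda)}] = \exp(\lambda(e^s-1))$ for all $s \in \BR$. The two tails are handled symmetrically: the upper tail uses $s \geq 0$, the lower tail uses $s \leq 0$, and the free parameter is then optimised.

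For the upper tail, let $k \geq \lambda$ and $s \geq 0$. Markov's inequality applied to $e^{s\,\text{Po}(\lambda)}$ gives
\begin{equation*}
    P[\text{Po}(\lambda) \geq k] \leq e^{-sk} E[e^{s\,\text{Po}(\lambda)}] = \exp\left(\lambda(e^s-1) - sk\right).
\end{equation*}
The exponent is convex in $s$ and is minimised at $s = \log(k/\lambda)$. This choice is admissible because $k \geq \lambda$ makes it nonnegative. Substituting it yields the exponent
\begin{equation*}
    \lambda\left(\frac{k}{\lambda}-1\right) - k\log\frac{k}{\lambda} = -\lambda\left(1-\frac{k}{\lambda}+\frac{k}{\lambda}\log\frac{k}{\lambda}\right) = -\lambda H\left(\frac{k}{\lambda}\right),
\end{equation*}
which is the claimed bound.

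For the lower tail, let $k \leq \lambda$ and $s \leq 0$. The event $\{\text{Po}(\lambda) \leq k\}$ equals $\{e^{s\,\text{Po}(\lambda)} \geq e^{sk}\}$, so the same Markov estimate gives $\exp(\lambda(e^s-1) - sk)$. Choosing $s = \log(k/\lambda) \leq 0$ reproduces $\exp(-\lambda H(k/\lambda))$ by the identical computation.

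The only delicate point is the degenerate value $k=0$, which can arise only in the lower tail. There $\log(k/\lambda)$ is undefined. We instead let $s \to -\infty$ in the bound, obtaining $e^{-\lambda} = \exp(-\lambda H(0))$ under the convention $H(0)=1$. This matches the exact value $P[\text{Po}(\lambda) = 0] = e^{-\lambda}$. Since $k \in \BN$, this case is needed only if $0 \in \BN$, but it costs nothing to include.

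An alternative route derives the lemma from the preceding binomial lemma. Apply that lemma with $p = \lambda/N$: the right-hand side $\exp(-\lambda H(k/\lambda))$ does not depend on $N$. Then let $N \to \infty$, using $\text{Bi}(N,\lambda/N) \xrightarrow{d} \text{Po}(\lambda)$. Because the events $\{\leq k\}$ and $\{\geq k\}$ involve integer thresholds, their probabilities converge, so the bounds pass to the limit. The direct Chernoff argument is self-contained, however, and is the one I would write.
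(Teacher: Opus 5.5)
Your Chernoff argument (Markov's inequality applied to $e^{s\,\text{Po}(\lambda)}$ with $s=\log(k/\lambda)$, plus the $s\to-\infty$ limit giving $e^{-\lambda}$ when $k=0$) is correct. It is essentially the standard generating-function proof from Penrose's Section~1, which the paper cites instead of giving its own proof. Your alternative route through the binomial lemma with $p=\lambda/N$ and $N\to\infty$ is also valid.
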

\begin{lem}
    For all $\lambda \in (0,\infty)$ large enough,
    \begin{equation*}
        P[\text{Po}(\lambda) \geq \lambda + \lambda^{3/4}/2] \leq \exp(-\lambda^{1/2}/9), \quad P[\text{Po}(\lambda) \leq \lambda - \lambda^{3/4}/2] \leq \exp(-\lambda^{1/2}/9)
    \end{equation*}
    hold.
\end{lem}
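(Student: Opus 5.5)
The plan is to derive both inequalities from the Poisson Chernoff bound in the preceding lemma, combined with an elementary quadratic lower bound on $H$ near its zero $t=1$.

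\textbf{Step 1: reduce to a real threshold.} The preceding lemma is stated for $k\in\BN$, whereas $\lambda\pm\lambda^{3/4}/2$ need not be an integer. For the upper tail, I would note that $P[\text{Po}(\lambda)\geq a]=P[\text{Po}(\lambda)\geq \lceil a\rceil]$. Since $H$ is increasing on $[1,\infty)$ and $\lceil a\rceil\geq a\geq\lambda$, this gives
\begin{equation*}
P[\text{Po}(\lambda)\geq a]\leq \exp\bigl(-\lambda H(\lceil a\rceil/\lambda)\bigr)\leq \exp\bigl(-\lambda H(a/\lambda)\bigr).
\end{equation*}
For the lower tail, I would use $\lfloor a\rfloor\leq a\leq\lambda$ and that $H$ is decreasing on $[0,1]$, which gives the analogous bound with $a$ in place of the integer. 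Alternatively, one can rerun the standard Chernoff argument $P[\text{Po}(\lambda)\geq a]\leq \inf_{s>0}e^{-sa}E[e^{s\,\text{Po}(\lambda)}]$ directly for real $a$, which yields the same expression $\exp(-\lambda H(a/\lambda))$.

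\textbf{Step 2: a quadratic lower bound on $H$.} Since $H(1)=H'(1)=0$ and $H''(t)=1/t$, Taylor's formula with integral remainder gives
\begin{equation*}
H(1+x)=\int_0^x \frac{x-s}{1+s}\,ds .
\end{equation*}
For $x\geq 0$ this is at least $x^2/(2(1+x))$. For $-1<x\leq 0$ the integrand satisfies $1/(1+s)\geq 1$, so the integral is at least $x^2/2$.

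\textbf{Step 3: conclude.} I set $x=\lambda^{-1/4}/2$, so that $a/\lambda=1\pm x$.
\begin{itemize}
\item For the upper tail, Steps 1 and 2 give $\lambda H(1+x)\geq \lambda x^2/(2(1+x))=\lambda^{1/2}/(8(1+x))$. Since $x\to 0$ as $\lambda\to\infty$, this is at least $\lambda^{1/2}/9$ once $1+x\leq 9/8$, that is, once $\lambda\geq 16^{-1}\cdot 8^{4}=256$.
\item For the lower tail, Step 2 gives directly $\lambda H(1-x)\geq \lambda^{1/2}/8\geq\lambda^{1/2}/9$.
\end{itemize}

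The only mildly delicate point is the integer-threshold issue in Step 1, which is handled by the monotonicity of $H$ on each side of $1$. The rest is routine.
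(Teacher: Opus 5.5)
Your proof is correct and takes the same route as the argument the paper defers to (Penrose, Section~1): apply the Chernoff-type Poisson tail bound $\exp(-\lambda H(t))$ at $t = 1 \pm \lambda^{-1/4}/2$, and use that $H(1+x)$ behaves like $x^2/2$ near $x=0$, so the exponent is about $\lambda^{1/2}/8 > \lambda^{1/2}/9$. Your explicit bounds $H(1+x)\geq x^2/(2(1+x))$ for $x\geq 0$ and $H(1+x)\geq x^2/2$ for $-1<x\leq 0$, together with the rounding to integer thresholds via the monotonicity of $H$ on each side of $1$, make the argument fully rigorous with the explicit threshold $\lambda\geq 256$.
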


For measurable functions on the product measurable space $\mathbb{N}(\BR^d) \times \mathbb{N}(\BR^d)$, we have the following expectation formula, which is a fundamental result in the Palm theory for Poisson processes:
\begin{lem}
    Assume that $\lambda \in (0,\infty)$ and $j \in \BN$.
    Let $h : \mathbb{N}(\BR^d) \times \mathbb{N}(\BR^d) \to [0,\infty]$ be a measurable function such that for $\CY, \CX \in \mathbb{N}(\BR^d)$, $h(\CY,\CX) = 0$ unless $\CY (\BR^d) = j$.
    Then,
    \begin{equation*}
        E\left[\sum_{\CY \subset \CP_{\lambda}} h(\CY, \CP_{\lambda})\right] = \frac{\lambda^j}{j!} E[h(\CX_j', \CX_j' + \CP_{\lambda})]
    \end{equation*}
    holds, where $\CX_j'$ is an independent copy of $\CX_j$.
\end{lem}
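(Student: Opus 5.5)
The plan is to prove the formula by conditioning on the total number of points, $N_\lambda$, and then using exchangeability of the i.i.d.\ points $X_1, X_2, \dots$. Since $h \ge 0$, every interchange of sums and expectations below is justified by Tonelli's theorem, with no integrability assumptions needed.

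\emph{Step 1 (index subsets).} Recall $\CP_\lambda = \sum_{i=1}^{N_\lambda} \delta_{X_i}$. Because the law of $X_1$ has a density, $\CP_\lambda$ is almost surely simple. Hence the counting measures $\CY \subset \CP_\lambda$ with $\CY(\BR^d) = j$ are in bijection with the subsets $\bm{\ell} \subset \{1, \dots, N_\lambda\}$ of size $j$, via $\CY = \sum_{i \in \bm{\ell}} \delta_{X_i}$. Since $h(\CY, \cdot) = 0$ unless $\CY(\BR^d) = j$, this gives almost surely
\begin{equation*}
    \sum_{\CY \subset \CP_\lambda} h(\CY, \CP_\lambda) = \sum_{\bm{\ell} \subset \{1, \dots, N_\lambda\},\, |\bm{\ell}| = j} h\Bigl(\sum_{i \in \bm{\ell}} \delta_{X_i}, \CP_\lambda\Bigr).
\end{equation*}

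\emph{Step 2 (condition on $N_\lambda = N$).} Since $N_\lambda$ is independent of $(X_i)$, conditionally on $N_\lambda = N$ we have $\CP_\lambda = \CX_N$. Fix a subset $\bm{\ell}$ of size $j$. By exchangeability, the pair $\bigl(\sum_{i \in \bm{\ell}} \delta_{X_i}, \CX_N\bigr)$ has the same law as $\bigl(\CX_j', \CX_j' + \CX_{N-j}''\bigr)$, where $\CX_j'$ and $\CX_{N-j}''$ are independent binomial processes with $j$ and $N-j$ points. There are $\binom{N}{j}$ such subsets, so the conditional expectation equals
\begin{equation*}
    \binom{N}{j}\, E\bigl[h(\CX_j', \CX_j' + \CX_{N-j}'')\bigr] \quad (N \ge j),
\end{equation*}
and it equals $0$ for $N < j$.

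\emph{Step 3 (re-sum the Poisson weights).} Averaging over $N$ and substituting $M = N - j$, we use
\begin{equation*}
    e^{-\lambda}\frac{\lambda^N}{N!}\binom{N}{j} = \frac{\lambda^j}{j!}\, e^{-\lambda}\frac{\lambda^{M}}{M!}.
\end{equation*}
This turns the expectation into
\begin{equation*}
    \frac{\lambda^j}{j!} \sum_{M \ge 0} P[N_\lambda = M]\, E\bigl[h(\CX_j', \CX_j' + \CX_M'')\bigr].
\end{equation*}
The sum over $M$ is exactly $E[h(\CX_j', \CX_j' + \CP_\lambda)]$, where $\CP_\lambda$ is independent of $\CX_j'$. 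This is obtained by conditioning on $N_\lambda$ once more, since $\CP_\lambda \overset{d}{=} \CX_M$ given $N_\lambda = M$.

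The only point needing care is Step 1: the identification of measure-theoretic sub-configurations $\CY \le \CP_\lambda$ with index subsets. This relies on almost-sure simplicity. Measurability of the resulting sum is immediate once it is written as a finite sum over index subsets.
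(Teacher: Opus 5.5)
Your proof is correct. It is essentially the standard argument the paper relies on; the paper gives no proof itself and defers to Penrose, Section~1. That argument conditions on $N_\lambda = N$, uses exchangeability to obtain $\binom{N}{j} E[h(\CX_j', \CX_j' + \CX_{N-j}'')]$, and then re-indexes the Poisson weights. Your appeal to Tonelli also covers the $[0,\infty]$-valued $h$ in the paper's statement, where Penrose's version assumes $h$ is bounded.
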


\section*{Acknowledgement}
The author would like to thank Professor Ryoki Fukushima for his constant guidance and fruitful discussions.
He also provided many helpful comments through a careful reading of the manuscript.

\end{document}